\def\chaptermark#1{}
\def\chapter{%
  \if@openright\cleardoublepage\else\clearpage\fi
  \thispagestyle{plain}\global\@topnum\z@
  \@afterindenttrue \secdef\@chapter\@schapter}
\def\@chapter[#1]#2{\refstepcounter{chapter}%
  \ifnum\c@secnumdepth<\z@ \let\@secnumber\@empty
  \else \let\@secnumber\thechapter \fi
  \typeout{\chaptername\space\@secnumber}%
  \def\@toclevel{0}%
  \ifx\chaptername\appendixname \@tocwriteb\tocappendix{chapter}{#2}%
  \else \@tocwriteb\tocchapter{chapter}{#2}\fi
  \chaptermark{#1}%
  \addtocontents{lof}{\protect\addvspace{10\p@}}%
  \addtocontents{lot}{\protect\addvspace{10\p@}}%
  \@makechapterhead{#2}\@afterheading}
\def\@schapter#1{\typeout{#1}%
  \let\@secnumber\@empty
  \def\@toclevel{0}%
  \ifx\chaptername\appendixname \@tocwriteb\tocappendix{chapter}{#1}%
  \else \@tocwriteb\tocchapter{chapter}{#1}\fi
  \chaptermark{#1}%
  \addtocontents{lof}{\protect\addvspace{10\p@}}%
  \addtocontents{lot}{\protect\addvspace{10\p@}}%
  \@makeschapterhead{#1}\@afterheading}
\newcommand\chaptername{Chapter}
\def\@makechapterhead#1{\global\topskip 7.5pc\relax
  \begingroup
  \fontsize{\@xivpt}{18}\bfseries\centering
    \ifnum\c@secnumdepth>\m@ne
      \leavevmode \hskip-\leftskip
      \rlap{\vbox to\z@{\vss
          \centerline{\normalsize\mdseries
              \uppercase\@xp{\chaptername}\enspace\thechapter}
          \vskip 3pc}}\hskip\leftskip\fi
     #1\par \endgroup
  \skip@34\p@ \advance\skip@-\normalbaselineskip
  \vskip\skip@ }
\def\@makeschapterhead#1{\global\topskip 7.5pc\relax
  \begingroup
  \fontsize{\@xivpt}{18}\bfseries\centering
  #1\par \endgroup
  \skip@34\p@ \advance\skip@-\normalbaselineskip
  \vskip\skip@ }
\def\appendix{\par
  \c@chapter\z@ \c@section\z@
  \let\chaptername\appendixname
  \def\thechapter{\@Alph\c@chapter}}
\newcounter{chapter}
\newif\if@openright
\newtheorem{theorem}{Theorem}[section]
\newtheorem{lemma}[theorem]{Lemma}
\newtheorem{proposition}[theorem]{Proposition}
\newtheorem{corollary}[theorem]{Corollary}   
\newtheorem{claim}[theorem]{Claim}
\theoremstyle{definition}
\newtheorem{definition}[theorem]{Definition}
\newtheorem{remark}[theorem]{Remark}
 \newenvironment{proof_of_main_theorem2}
{{\it Proof of  Theorem~\ref{maintheorem}.}}
{\hfill $\Box$ \\}
\newenvironment{proof_of_main_theorem3}
{{\it Proof of  Theorem~\ref{maintheorem'}.}}
{\hfill $\Box$ \\}
\newenvironment{prooflemma:star'}
{{\it Proof of  Lemma~\ref{lemma:star'}.}}
{\hfill $\Box$ \\}
\newenvironment{proof1}
{{\it Proof of  Theorem~\ref{maintechnical}.}}
{\hfill $\Box$ \\}
\numberwithin{equation}{section}
\newcommand{\R}{\mathbb R}
\newcommand{\N}{\mathbb N}
\newcommand{\C}{\mathbb C}
\newcommand{\Sp}{\mathbb S}
\newcommand{\B}{B}
\newcommand{\deltae}{\delta_{\text{euc}}}
\newcommand{\ee}{{(\epsilon_j)}}
\newcommand{\PF}{P_F}
\newcommand{\FY}{F \times Y}
\newcommand{\PFk}{P_{F_k}}
\newcommand{\Waff}{W_{\mathrm{aff}}}
\newcommand\wlim{\mathop{\mbox{$\omega$-$\lim$}}}
\title[Harmonic maps into Euclidean buildings]{Harmonic maps into Euclidean buildings and non-Archimedean superrigidity}
\thanks{
CB supported in part by NSF DMS CAREER-1750254, CM supported in part by NSF DMS-2304697.}
\author[Breiner]{Christine Breiner}
\address{Brown University\\
Department of Mathematics\\
Providence, RI}
\email{christine\underline{ }breiner@brown.edu}
\author[Dees]{Ben K. Dees}
\address{Brown University\\
Department of Mathematics\\
Providence, RI}
\email{benjamin\underline{ }dees@brown.edu}
\author[Mese]{Chikako Mese}
\address{Johns Hopkins University\\
Department of Mathematics\\
Baltimore, MD}
\email{cmese@math.jhu.edu}
\begin{document}

\maketitle

\begin{abstract}
We prove that harmonic maps into Euclidean buildings, which are not necessarily locally finite, have singular sets of Hausdorff codimension 2, extending the locally finite regularity result of Gromov and Schoen. As an application, we prove superrigidity for {algebraic groups over fields with non-Archimedean valuation}, thereby generalizing the rank 1 $p$-adic superrigidity results of Gromov and Schoen and casting the Bader-Furman generalization of Margulis' higher rank superrigidity result in a geometric setting.  We also prove an existence theorem for a pluriharmonic map from a K\"ahler manifold to a Euclidean building.
\end{abstract}

\section{Introduction}\label{sec:intro}

Gromov and Schoen’s celebrated result [GS] established $p$-adic superrigidity and the consequent arithmeticity for lattices of certain rank 1 groups. Alongside Corlette’s rank 1 Archimedean superrigidity result [C], these findings complement Margulis’ higher rank superrigidity results \cite{margulis}. Both Corlette and Gromov-Schoen's theorems are geometric superrigidity results.  In other words, they determine conditions under which an isometric action on a complete CAT(0) space has a fixed point or leaves a convex subset invariant. The proof is through harmonic map techniques, with Gromov-Schoen’s proof notably involving the generalization of classical harmonic map techniques to a singular setting.

A Euclidean building $X$ is equipped with a distance function $d$ which makes $(X,d)$ into a Hadamard space (ie~a complete metric space satisfying CAT(0) triangle comparison).  Euclidean buildings share similarities with  Riemannian symmetric spaces of non-compact type which make them natural subjects of geometric study. Kleiner and Leeb [KL] illustrated this connection by showing that asymptotic cones of Riemannian symmetric spaces are Euclidean buildings.

More broadly, J. Tits introduced buildings to provide a geometric interpretation of a certain class of  groups \cite{tits}.
Specifically, given a semi-simple group $G$ over
a field endowed with a non-Archimedean valuation, Tits constructed a  metric space $X$  where $G$ acts by isometries.    
Special cases are the  Bruhat-Tits buildings associated with $p$-adic Lie groups, which are distinguished by the property that they are locally finite Euclidean buildings \cite{bruhat-tits}. 

The technical achievement of \cite{gromov-schoen} lies in  developing a harmonic map theory applicable to singular spaces which include Bruhat-Tits buildings {for $p$-adic groups}. In particular, Gromov and Schoen establish that the singular set of a harmonic map into {such a building} is small. This enables them to utilize non-linear Bochner techniques developed by Siu \cite{siu} and Corlette \cite{corlette} {for maps into smooth manifolds} to prove rigidity results.
 {(See \cite{freidin,freidin-zhang,zhang-zhong-zhu} for Bochner formulas for harmonic maps into singular spaces.)} 
To the best knowledge of the authors, harmonic map theory remains the only known method for proving superrigidity in rank 1 cases.

When the valuation is not discrete, the work of \cite{gromov-schoen} no longer applies. In particular, analyzing harmonic maps into the associated Euclidean building becomes challenging due to the absence of local finiteness in the space.  
The main technical result of this paper is the following.  When $\dim X=1$,  this theorem is due to   \cite{sun}.

\begin{theorem} \label{maintechnical}
If $u:\Omega \rightarrow X$ is a harmonic map from a smooth Riemannian domain with Lipschitz boundary into a Euclidean building (not necessarily locally finite), then the singular set of $u$ is {a closed set} of Hausdorff codimension 2. 
\end{theorem}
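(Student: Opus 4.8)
The plan is to follow the blow-up and dimension-reduction blueprint of \cite{gromov-schoen}, carefully isolating the steps where local finiteness (equivalently, compactness of links and of small balls) was used and replacing each; the base case of the resulting induction is the one-dimensional-target result of \cite{sun}.

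\textbf{Analytic setup.} By Korevaar--Schoen theory together with \cite{gromov-schoen} --- the Lipschitz, rather than smooth, Riemannian structure on $\Omega$ only produces lower-order error terms that do not affect the argument --- $u$ is locally Lipschitz, and at every $x_0\in\Omega$ the normalized energy ratio
\[
\sigma \;\longmapsto\; \frac{\sigma\int_{B_\sigma(x_0)}\abs{\nabla u}^2\,dV}{\int_{\partial B_\sigma(x_0)} d^2\!\big(u,u(x_0)\big)\,dS}
\]
is almost monotone, so its limit $\mathrm{Ord}_u(x_0)$ exists, is upper semicontinuous, and satisfies $\mathrm{Ord}_u(x_0)\ge 1$ (immediate from Lipschitz continuity, since $\int_{\partial B_\sigma(x_0)}d^2(u,u(x_0))\,dS=O(\sigma^{n+1})$, where $n=\dim\Omega$). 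Following \cite{gromov-schoen}, declare $x$ \emph{regular} if $u$ maps some neighborhood of $x$ into a single apartment of $X$, so that there $u$ is a classical $\R^{\dim X}$-valued harmonic map and hence smooth; let $\cS\subset\Omega$ be the complement of the regular set. Since the regular set is open by definition, $\cS$ is closed, and it remains to prove $\dim_{\cH}\cS\le n-2$.

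\textbf{Blow-up; the main obstacle.} Fix $x_0\in\cS$. Rescaling $u$ around $x_0$ at scales $\sigma_j\downarrow 0$ and using the uniform Lipschitz and energy bounds, extract a subsequential tangent map $u_*\colon\R^n\to T_{u(x_0)}X$; here $T_{u(x_0)}X$ is again a Euclidean building (with link the spherical building $\Sigma_{u(x_0)}X$), and by the monotonicity formula $u_*$ is homogeneous of degree $\alpha=\mathrm{Ord}_u(x_0)$ with $\mathrm{Ord}_{u_*}(0)=\alpha$ and $0\in\cS_{u_*}$ (a tangent map landing in one apartment would, as in \cite{gromov-schoen}, force $u$ into one near $x_0$). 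The obstruction, compared with the locally finite setting, is that links, hence unit spheres of tangent cones, need not be compact: Gromov--Schoen's compactness-based analysis of tangent maps --- the classification of the low-order ones, and above all the precompactness of the family of all tangent maps that lets one iterate the blow-up in the dimension reduction --- is not directly available. I expect this to be the main difficulty. The plan to bypass it is to work with $u_*$ as the Euclidean cone over its harmonic cross-section $u_*|_{S^{n-1}}$, to control the latter via the canonical retractions of $T_{u(x_0)}X$ onto its apartments (these exist in any building), and to set up an induction on the complexity (dimension/rank) of $X$: splitting off the flat factor of $T_{u(x_0)}X$ and, after composing with the linking map, passing to a minimizing map into a lower-rank spherical building, one reduces to simpler targets, the base case $\dim X=1$ --- where the link is merely a discrete, possibly infinite, set --- being exactly \cite{sun}. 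This inductive framework is what will supply, without local compactness, the precompactness of tangent maps needed in the next step.

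\textbf{Federer dimension reduction.} Granting precompactness of tangent maps, suppose $\dim_{\cH}\cS>n-2$. The standard Federer--Almgren argument then produces, after finitely many iterated blow-ups, a homogeneous harmonic map $v\colon\R^n\to X'$ into a Euclidean building $X'$ that is invariant under translation by a linear subspace $V\subseteq\R^n$ with $\dim V\ge n-1$ and has $V\subseteq\cS_v$, in particular $0\in\cS_v$. By $V$-invariance and homogeneity, $v$ descends to a homogeneous harmonic map from $\R^{\,n-\dim V}$, a Euclidean space of dimension at most $1$; but a homogeneous harmonic map from $\R^0$ or $\R^1$ is a point, respectively a constant-speed geodesic line through the cone vertex of $X'$, and such a line lies in a single apartment of $X'$ --- so $0$ is a regular point of $v$, contradicting $0\in\cS_v$. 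Hence $\dim_{\cH}\cS\le n-2$: the singular set is closed of Hausdorff codimension at least $2$, as asserted.
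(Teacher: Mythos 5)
Your analytic setup (Lipschitz regularity, order function, closedness of the regular set, tangent maps into the tangent cone) is correct and matches the paper. The flaw is in the central claim on which everything else rests: \emph{``a tangent map landing in one apartment would, as in [GS], force $u$ into one near $x_0$''}, from which you deduce $0\in\cS_{u_*}$ for every $x_0\in\cS$. This is exactly the key technical step of the paper and it cannot be cited from [GS]: in [GS, Theorem 5.1] the analogous implication is proved \emph{using local finiteness} of the target (through the notion of a homogeneous degree-$1$ map being ``effectively contained'' in a subbuilding), and replacing that is precisely what Sections 3--6 of the present paper are for (projection onto $P_F$, quantitative loss of energy, closeness in measure, then the local product structure). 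Moreover the statement itself is too strong: even in the locally finite case the conclusion is that $u$ maps a neighborhood of $x_0$ into a subbuilding $P_F\simeq F\times Y$, which is generally not a single apartment unless $\operatorname{rank}(L)=\dim X$. Consequently, at an order-$1$ singular point the tangent map $u_*$ \emph{is} a homogeneous degree-$1$ harmonic map, which by Proposition~\ref{prop:flats} lands in an apartment of $T_{u(x_0)}X$ --- so $0$ is \emph{regular} for $u_*$, and your Federer dimension reduction never engages there. The Federer--Almgren argument only controls the higher-order set $\{\operatorname{Ord}>1\}$ (this is the content of Lemma~\ref{higherorderpoints} in the paper). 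The order-$1$ singular set requires a completely different mechanism: the local product structure (Theorem~\ref{thm:ordgap}(2)) and induction on the dimension of the target.

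There is a second, acknowledged gap: you write ``granting precompactness of tangent maps'' and offer that the inductive framework ``will supply'' it. In fact the paper does not establish, or use, precompactness of tangent maps in any fixed target. Since $X$ need not be locally compact, the blow-up sequences are handled via Korevaar--Schoen pullback limits (equivalently ultralimits), where the \emph{target changes} along the sequence; the resulting limit lives in an ultralimit building $X_\omega$ of the same type $W$, not in $T_{u(x_0)}X$ a priori. The dimension reduction for the higher-order part is then carried out in this reformulated framework, using upper semicontinuity of order under pullback convergence together with the order gap of Theorem~\ref{thm:ordgap}(1). So the structure of a correct proof is: (i) dimension reduction for $\{\operatorname{Ord}>1\}$ in the pullback/ultralimit sense, and (ii) local product structure plus induction on target dimension for the order-$1$ singular set, with (ii) carrying essentially all of the new technical weight of the paper and being entirely absent from your proposal.
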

\noindent {The {\it singular set} is the set of points satisfying the property that none of its neighborhoods is mapped into a single apartment.  See Definition~\ref{def:singular} for the precise definition.} {Note that in the locally finite setting, for $\text{dim}(\Omega)=n$, Dees \cite{dees} recently proved the stronger result that the singular set is in fact $(n-2)$-rectifiable.}

 As a consequence of Theorem \ref{maintechnical}, we obtain the following rigidity theorem.  For lattices in groups of rank 1, this  generalizes the geometric superrigidity results of  Corlette (cf~\cite{corlette}) and Gromov-Schoen (cf~\cite{gromov-schoen}).   For  lattices in groups of rank $\geq 2$, this constitutes the geometric superrigidity assertion corresponding to the group theoretic statement of Bader-Furman \cite{bader-furman} proved through the   dynamics of semisimple Lie groups. 
 
  \begin{theorem} \label{maintheorem}
 Let $\widetilde M= G \slash K$ be an irreducible symmetric space of noncompact type that is not the Euclidean space, 
 $SO_0(p,1) \slash SO(p) \times SO(1)$, nor $SU_0(p,1) \slash S(U(p) \times U(1))$.
  Let 
 $\Gamma$ be a lattice in $G$  and let $\rho: \Gamma \rightarrow \text{Isom}(X)$ be a homomorphism where $X$ is a Euclidean building (not necessarily locally finite), {and $\rho(\Gamma)$ does not fix a point at infinity}. If the rank of $\widetilde M$ is $\geq 2$, we additionally assume that $\Gamma$ is cocompact. 
 Then $\rho(\Gamma)$ fixes a point of $X$.
  \end{theorem}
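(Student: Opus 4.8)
The plan is to run the harmonic-map approach to superrigidity (Corlette \cite{corlette}, Siu \cite{siu}, Gromov--Schoen \cite{gromov-schoen}, together with the higher-rank Bochner formulas of Mok--Siu--Yeung and Jost--Yau), using Theorem~\ref{maintechnical} to supply exactly the regularity needed to run Bochner-type arguments when the target is a Euclidean building. First one produces a finite-energy $\rho$-equivariant harmonic map $u\colon\widetilde M\to X$. By the existence theory for equivariant maps into Hadamard spaces (Korevaar--Schoen; cf.\ \cite{gromov-schoen}), one minimizes the $\Gamma$-invariant energy over $\rho$-equivariant maps: either a minimizer exists, or a minimizing sequence runs off to infinity and, after rescaling and passing to an ultralimit, yields a $\rho$-fixed point at infinity of $X$ --- which is excluded by hypothesis. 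When $\mathrm{rank}\,\widetilde M\geq 2$, cocompactness of $\Gamma$ makes this step standard; in the remaining rank-$1$ cases (after removing real and complex hyperbolic space, $\widetilde M$ is quaternionic or Cayley hyperbolic) one works with finite-energy equivariant maps, controlling $u$ near the cusps via finite volume, finite energy, and the exponential decay of the cusp geometry, as in \cite{gromov-schoen}.

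Next, apply Theorem~\ref{maintechnical}: the singular set $\mathcal S(u)$ is closed of Hausdorff codimension $2$, so the regular set $\mathcal R(u)=\widetilde M\setminus\mathcal S(u)$ is open, dense, and connected, and near every point of $\mathcal R(u)$ the map $u$ factors through a single apartment $A\cong\R^{\dim X}$; in particular $u|_{\mathcal R(u)}$ is a smooth harmonic map into a flat space. On $\mathcal R(u)$ one may therefore invoke the pointwise geometric-superrigidity Weitzenb\"{o}ck identity for harmonic maps of $\widetilde M$ into a nonpositively curved target (Corlette \cite{corlette} in the rank-$1$ cases, Mok--Siu--Yeung and Jost--Yau in higher rank, in each case adapted to building targets following \cite{gromov-schoen}). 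Because the apartments are flat, the term involving the curvature of the target vanishes identically; and the hypotheses on $\widetilde M$ --- irreducible, of noncompact type, and neither Euclidean, real hyperbolic, nor complex hyperbolic --- are exactly those under which the term built from the curvature of $\widetilde M$ has a definite sign (for the two excluded hyperbolic families both the identity and the conclusion genuinely fail). Integrating this identity over the finite-volume manifold $\widetilde M/\Gamma$ against cutoff functions $\eta_j\to 1$ and passing to the limit kills the boundary and cutoff contributions: this is precisely the step where the codimension-$2$ bound on $\mathcal S(u)$ is indispensable, since a closed set of Hausdorff codimension $2$ admits cutoff functions with $L^2$-small gradient. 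Together with the finite energy of $u$ (and the cusp estimates in the non-cocompact case), one concludes $\nabla du\equiv 0$ on $\mathcal R(u)$; that is, $u$ is totally geodesic there.

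Finally, since $\nabla du=0$ on the connected set $\mathcal R(u)$, the energy density $|\nabla u|^2$ is constant there and the symmetric $2$-tensor $u^*g_X$ is parallel. If this constant were positive, irreducibility of $\widetilde M$ would force $u^*g_X=\lambda\,g_{\widetilde M}$ with $\lambda>0$, so that $u$ would be, up to scaling, a totally geodesic isometric immersion into a flat apartment and $\widetilde M$ would itself be flat --- contradicting noncompact type. Hence $du\equiv 0$ on $\mathcal R(u)$, so $u$ is constant, say $u\equiv x_0$; by $\rho$-equivariance $\rho(\gamma)x_0=x_0$ for every $\gamma\in\Gamma$, so $\rho(\Gamma)$ fixes the point $x_0$, as claimed.

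The two genuinely delicate points are (a) the existence step in the rank-$1$ non-cocompact case --- establishing the dichotomy between a finite-energy equivariant harmonic map and a fixed point at infinity, and controlling the map near the cusps --- and (b) justifying the integration by parts in the Weitzenb\"{o}ck identity across the singular set. Point (b) is where Theorem~\ref{maintechnical} carries the weight: the Bochner argument of \cite{gromov-schoen} required the singular set to be negligible for locally finite buildings, and the codimension-$2$ estimate of Theorem~\ref{maintechnical} is exactly what allows the same argument to run without the local finiteness hypothesis.
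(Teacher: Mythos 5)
Your proposal is correct and follows essentially the same approach as the paper: existence via the Korevaar--Schoen theory for FR-spaces (using the hypothesis that $\rho(\Gamma)$ fixes no point at infinity), the codimension-2 regularity of Theorem~\ref{maintechnical} to supply cutoff functions $\psi_i$ with negligible contribution (this is Corollary~\ref{thm:maintech'}), a Bochner/Weitzenb\"ock argument of Corlette type in rank 1 and Mok--Siu--Yeung/Jost--Yau type in higher rank giving $\nabla du\equiv0$ off the singular set, and a conclusion from irreducibility. The one cosmetic difference is at the very end: you conclude directly from the parallel pullback metric on the open dense connected regular set, whereas the paper first invokes Theorem~\ref{thm:ordgap} to factor $u$ locally as $(u_1,u_2)$ into $F\times Y$, forces $u_2$ constant from the order gap, and thereby reduces to a totally geodesic map into a single flat before concluding --- both routes are valid and end up using the same irreducibility/flatness contradiction.
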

   
Next, we consider the case when the domain is a K\"ahler manifold.  Following~\cite[Section 7]{gromov-schoen},  a harmonic map from a K\"ahler manifold to a Euclidean building is called {\it pluriharmonic} if it is pluriharmonic in the usual sense {away from the singular set. }

\begin{theorem}\label{maintheorem'} Let $\widetilde M$ be the universal cover of a complete finite volume K\"ahler manifold $(M, \omega)$. Let 
$\Gamma =\pi_1(M)$, $X$  a Euclidean building (not necessarily locally finite) and $\rho:\Gamma \rightarrow \text{Isom}(X)$ a group homomorphism. 
Then any finite energy  $\rho$-equivariant harmonic map $u: \widetilde M \rightarrow X$ is pluriharmonic.  {In particular, if there exists a  $\rho$-equivariant map of finite energy into $X$, then there exists a $\rho$-equivariant pluriharmonic map into $X$.}
\end{theorem}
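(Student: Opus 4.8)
The plan is to follow the strategy of [GS, Section 7] for pluriharmonicity, but with all arguments localized away from the singular set, whose smallness is now guaranteed by Theorem \ref{maintechnical}. Concretely, let $u:\widetilde M \to X$ be a finite-energy $\rho$-equivariant harmonic map. By Theorem \ref{maintechnical}, the singular set $\mathcal S(u)\subset \widetilde M$ is closed of Hausdorff codimension $2$; in particular its complement is connected, open, and dense, and $u$ restricted to $\widetilde M\setminus\mathcal S(u)$ is, locally, a harmonic map into a single apartment, hence into a Euclidean space $\R^k$. On such a neighborhood $u$ is a genuine smooth (indeed harmonic) map into $\R^k$, so the classical Bochner–Siu machinery applies componentwise: one forms the $(1,0)$-part $\partial u$ and the associated $(p,0)$-forms, and shows that the relevant Siu–Sampson type identity forces $\partial u$ to be holomorphic, i.e. $u$ is pluriharmonic on $\widetilde M\setminus \mathcal S(u)$. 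The main input here is that the target has nonpositive complexified curvature in the appropriate (Euclidean, flat) sense on each apartment, which is trivially satisfied, exactly as in the symmetric space case treated by Sampson and Siu.

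The first genuinely new step is to promote pluriharmonicity across the singular set. Here I would use the codimension-$2$ bound together with the finite-energy hypothesis: the quantities that must vanish for pluriharmonicity (the components of $\bar\partial \partial u$, or the Siu form) are, on the regular set, $L^1_{loc}$ objects that vanish there, and a removable-singularity argument — capacity/cutoff estimates using that $\mathcal S(u)$ has codimension $2$ and $u$ has finite energy, so $\int |\nabla u|^2<\infty$ — shows the corresponding global weak identity holds across $\mathcal S(u)$. This is the step I expect to be the main obstacle: one must make sense of "$u$ is pluriharmonic near a singular point" and verify that no distributional mass is concentrated on $\mathcal S(u)$; the standard tool is to test against cutoffs $\eta_\varepsilon$ supported away from $\mathcal S(u)$ with $\int |\nabla \eta_\varepsilon|^2\to 0$, which exists precisely because the codimension is $2$, and to control boundary terms using the monotonicity/order estimates from the regularity theory underlying Theorem \ref{maintechnical}.

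For the final ("In particular") sentence, I would invoke the existence theory for equivariant harmonic maps into Hadamard spaces: given a finite-energy $\rho$-equivariant map $\widetilde M\to X$, one runs the standard energy-minimization (Korevaar–Schoen / Gromov–Schoen) argument to produce a finite-energy $\rho$-equivariant harmonic map, using completeness and finite volume of $(M,\omega)$ to handle the noncompactness (a cutoff exhaustion of $M$ plus the finite-volume condition controls the energy at infinity and prevents the image from escaping to infinity, since $\rho(\Gamma)$ having a finite-energy equivariant map already precludes a fixed point at infinity obstruction of the relevant type). Once such a harmonic map exists, the first part of the theorem upgrades it to a pluriharmonic one, completing the proof. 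The only subtlety in this last step is the noncompact, finite-volume setting, which I would address exactly as in the corresponding arguments of [GS] and subsequent finite-volume extensions, citing those rather than reproving them.
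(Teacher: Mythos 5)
Your proposal matches the paper's proof, which is deliberately terse: it cites Theorem~\ref{maintechnical} for the codimension-two singular set, Corollary~\ref{thm:maintech'} for the cutoff functions needed to cross it, Proposition~\ref{existence} for the equivariant existence step, and then follows [GS, Theorems 7.2 and 7.3] for the Siu--Sampson Bochner argument on the regular set -- exactly the four ingredients you identify. One small imprecision worth noting: the operative cutoff estimate (packaged as Corollary~\ref{thm:maintech'}) is $\int |\nabla du|\,|\nabla\psi_i|\,d\mu\to 0$, which requires the order-gap/monotonicity control on the Hessian and not merely that codimension two gives cutoffs with $\int|\nabla\eta_\varepsilon|^2\to 0$; and a finite-energy $\rho$-equivariant map does not by itself preclude a fixed point at infinity, so the existence argument in that case must be handled as in [GS]/Proposition~\ref{existence} rather than by the heuristic you offer -- but since you explicitly defer both points to the relevant references, these do not constitute gaps in the argument.
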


Theorem~\ref{maintheorem} is a further extension of the development of harmonic map techniques in singular spaces  to address geometric superrigidity problems.
Pioneered by \cite{gromov-schoen}, these singular spaces encompass  locally finite hyperbolic buildings  (cf~\cite{daskal-meseGAFA}), the Weil-Petersson completion of Teichm\"uller space (cf~\cite{daskal-meseINV}) and now, non-simplicial Euclidean buildings.  

Theorem~\ref{maintheorem'} builds upon the work of  Gromov-Schoen (cf~\cite[Section 9]{gromov-schoen}), which asserts the existence of pluriharmonic maps in the singular setting.
A notable application is in the factorization theorems for  Zariski dense representations of  fundamental groups of algebraic varieties.
In the paper \cite{corlette-simpson}, Corlette and Simpson proved that such a representation into $\mathsf{SL}_2(\C)$ factors through an orbicurve if it is non-rigid or not integral.   The rigidity aspect can be interpreted as the statement that a representation   into $\mathsf{SL}_2(\C(t))$  goes into a compact subgroup. The core argument in their proof involves harmonic maps to the Bruhat-Tits building (also known as the Serre tree) of $\mathsf{SL}_2(\mathbb{C}(t))$.  Since the Serre tree for $\mathsf{SL}_2(\C(t))$ is not locally compact, Gromov-Schoen theory does not apply, and they have to  make  a reduction mod $p$ to the case of representations in $\mathsf{SL}_2(\mathbb{F}_p((t)))$ where $\mathbb{F}_p$ is a finite field.  Hence,  this part of their paper can be greatly simplified by Theorem 1.3 or by Sun’s [Su] treatment of harmonic maps into $\R$-trees.  Our theorem can further be exploited in the generalizations to non-locally finite buildings that have been studied by various authors (cf [KNPS1], [KNPS2] and references therein).  Additionally, Theorem 1.3 can be applied to study factorization theorems for higher rank local system.

All of these theorems expand the scope of harmonic map techniques by allowing general Euclidean building targets.  An important example is   the harmonic map  that appears as the limit of a sequence of rescaled maps associated to a sequence of harmonic maps into a symmetric space with unbounded energy.  This is an   important tool in understanding the compactification of representation varieties (cf~\cite{wolf}, \cite{ddw},  
\cite{loftin-tamburelli-wolf}).

\subsection{Main Ideas}

Let $\Omega$ be a Lipschitz Riemannian domain and let $(X,d)$ be a Euclidean building.  We review concepts from \cite{gromov-schoen} that generalize analytical notions for real-valued functions.  These key concepts are also used in our paper.
\begin{itemize}
\item
{\it order of a harmonic map $u:\Omega \rightarrow (X,d)$ at $x_0 \in \Omega$} (cf~\cite[Section 2]{gromov-schoen}): For a harmonic function $f$, the order  at $x_0$ is the degree of the dominant homogeneous harmonic polynomial approximating $f(x)-f(x_0)$ near $x_0$.   
\item    {\it homogeneous degree 1 maps into $X$} (cf~\cite[Section 3]{gromov-schoen}): These are map with the property  that the restriction  to a radial ray   is a constant speed geodesic.   Homogeneous degree 1 harmonic maps generalize affine functions.  
\item  {\it instrinsically differentiable maps into $X$}(cf~\cite[Section 5]{gromov-schoen}):    These are maps that can be approximated near a point  by a homogeneous degree 1 harmonic error term going to zero faster than distance to the point.  Intrinsically differentiable maps generalize differentiable functions.  
\item {\it blow up maps $u_\sigma$ of a harmonic map $u:\Omega \rightarrow (X,d)$ at $x_0 \in \Omega$} (cf~\cite[Section 3]{gromov-schoen}):  For $\sigma>0$ small, restrict $u$ to $B_\sigma(x_0)$ and rescale in the domain by a factor of $\sigma$ with respect to normal coordinates centered at $x_0$ and rescale the distance function of $X$ by an appropriate constant $\mu_\sigma$ dependent on $\sigma$ to construct  $u_\sigma:B_1(0) \rightarrow (X,\mu_\sigma^{-1} d)$.  Blow up maps generalize the difference quotients of functions.
\end{itemize}

We now revisit the main components in the proof of Gromov-Schoen's regularity statement. The assumption that $X$ is locally finite plays a pivotal role, presenting a challenge when extending the analysis to general Euclidean buildings.  The two key components of Gromov and Schoen's proof are:

\begin{itemize}
\item[(1)]
{\it A harmonic map $u:\Omega \rightarrow (X,d)$ near $x_0 \in \Omega$ is approximated by its tangent map $u_*$ at $x_0$.} For a locally finite building $X$, there is a neighborhood $U$ of a point $u(x_0) \in X$ and a neighborhood $V$ of the vertex of the tangent cone $T_{u(x_0)}X$ such that $U$ and $V$ are isometric.  Using this fact,  Gromov and Schoen can assume that the blow up maps $u_\sigma$ and $u_*$ map  into a metric cone.  Indeed, identifying $U$ with $V$, the rescaling of the distance function is equivalent to rescaling the cone.  Thus, they can assume that for all $\sigma>0$ sufficiently small, $u_\sigma$  maps into the  tangent cone $T_{u(x_0)}X$. Applying Arzela-Ascoli, they take the limit of a subsequence $u_k:=u_{\sigma_k}$ to obtain a tangent map $u_*:B_1(0) \rightarrow T_{u(x_0)}X$.  
\item[(2)] 
{\it The  tangent map $u_*$ at an order 1 point is effectively contained in a product space $\R^m \times Y$ where $Y$ is a lower dimensional Euclidean building.} For an order 1 point $x_0$ of $u$, a tangent map $u_*$ of $u$ at $x_0$ is a homogeneous degree 1 harmonic map and its image is a flat $F$,~ie~a copy of Euclidean space $\R^m$ isometrically and totally geodesically embedded in $T_{u(x_0)}X$.  The union $P_F$ of all apartments (ie~all top dimensional  flats) containing $F$ is a subbuilding of $T_{u(x_0)}X$ which  is isometric to $\R^m \times Y$ where $Y$ is a Euclidean building of dimension $N-m$.   The simplicial structure of apartments in a locally finite building implies that $u_*$ is then {\it effectively contained} in $P_F$ (cf~\cite[Section 5]{gromov-schoen}).  See Figure~\ref{fig:phi1}.
\end{itemize}

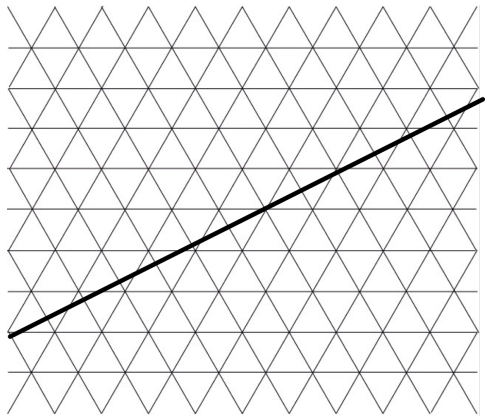
\begin{figure}[ht]
\centering

\begin{tikzpicture}[scale=0.7, x=1.15cm, y=1.15cm]
\pgfmathsetmacro{\rt}{1.7320508}    
\pgfmathsetmacro{\hh}{0.8660254}    

\clip (0,0) rectangle (10,8);

\foreach \k in {0,...,9} {
  \draw[gray!70, line width=0.5pt] (0,{\k*\hh}) -- (10,{\k*\hh});
}

\foreach \m in {-20,-19,...,20} {
  \draw[gray!70, line width=0.5pt]
    (-2,{\rt*(-2) + \m*2*\hh}) -- (12,{\rt*(12) + \m*2*\hh});
  \draw[gray!70, line width=0.5pt]
    (-2,{-\rt*(-2) + \m*2*\hh}) -- (12,{-\rt*(12) + \m*2*\hh});
}

%
%

\draw[line width=0.8mm, line cap=round]
  (0.05,1.6) -- (1.6,2.3) -- (4.2,3.6) -- (6.3,4.7) -- (10.2,6.6);

\end{tikzpicture}
  \caption{In the locally finite case, a homogeneous degree 1  map $L$ is effectively contained in  $P_F$.  This diagram depicts an example when $P_F$ is an apartment.  The thin lines represent walls of an apartment and the thick line represent $F=L(\R^n)$. The map $L$ is effectively contained in $P_F$ since the preimage of walls is locally a finite set of hyperplanes. Therefore,  the set of points mapping close to the complement of $P_F$ is small since apartments only  intersect along walls.  For the non-locally finite case, the situation is much more complicated.  For example, the union of walls could be a dense set in an apartment.}
  \label{fig:phi1}
   \label{diagram:effective}
\end{figure}

Using the above ingredients, Gromov and Schoen prove a regularity theorem \cite[Thoerem 6.3]{gromov-schoen} by an inductive argument based on the dimension of $X$.   The main step of the regularity theorem is \cite[Theorem 5.1]{gromov-schoen} which shows that, near an order 1 point, the image of a  harmonic map  is contained in the product space $\R^m \times Y$ from (2). Therefore,  a harmonic map locally decomposes into two harmonic maps, one into $\R^m$ and another into $Y$.
From this, they conclude that  the singular set  in a  neighborhood of an order 1 point is of Hausdorff codimension 2 by the inductive hypothesis.  Thus, by proving that the set of higher order points is of Hausdorff codimension 2,  they complete the  proof of  Theorem~\ref{maintechnical} for Bruhat-Tits buildings.  
Since the Gromov-Schoen theory exclusively addresses locally finite simplicial complexes, it cannot be directly applied to analyze the non-simplicial target spaces considered in this paper.

The core of this paper  involves proving the {\it local product structure} of a harmonic map at an order 1 point  described in the above paragraph for a general Euclidean building.  To do so,  we capitalize  on the Euclidean building structure to replace the reliance on the local finiteness and simplicial structure.  An $N$-dimensional Euclidean building is a union of apartments, ie~isometrically  embedded copies of Euclidean space $\R^N$.  Moreover, $\R^N$ comes equipped with an affine Weyl group, ie~a subgroup of the isometry group of $\R^N$ generated by reflections  across hyperplanes, and such that its  rotational part is a finite reflection group $W$.   The group $W$ plays an especially important role in our paper, which we highlight here.

First, because  $X$ is not necessarily locally compact, we cannot assume that the blow up maps $u_\sigma$ centered at a  point $x_0$ all have the same target space.  Hence, instead of employing the conventional limit of a sequence of maps, we rely on the ultralimit $u_\omega=\wlim u_k$ (or alternatively, the Korevaar-Schoen limit $u_*$ defined in \cite[Section 3]{korevaar-schoen2}) of blow up maps $u_k:=u_{\sigma_k}$.  The target space of $u_\omega$ is  the ultralimit $X_\omega:=\wlim X_k$, where $X_k: =(X,\mu_{\sigma_k}^{-1}d)$ are the rescalings of $X$. By \cite[Theorem 5.5.1]{kleiner-leeb}, $X_\omega$ is also a Euclidean building of type $W$.   If $x_0$ is an order 1 point, then $u_\omega$ is a homogeneous degree 1 harmonic map that can be extended to a map $L:\R^n \rightarrow X_\omega$ whose image is a flat $F$ contained in an apartment $A$. We use the map $L$  to ``pull back" $u_\omega$ to $X_k$ and  construct   a homogeneous degree 1 harmonic map $L_k$ which is close to  $u_k$.  The subbuilding $P_{F_k}$ associated to the flat $F_k=\mbox{image}(L_k)$ is isometric to a product $\R^m \times Y_k$ where the building structure of $Y_k$ depends on $W$.

Second, because $X_k$ is not necessarily locally finite,  we cannot assume that $L_k$ is  effectively contained in the subbuilding $\PFk$.  To overcome this difficulty, we use the building structure of $X$ that governs the way in which two apartments  intersect.  Indeed, the angle of intersection of two apartments is restricted by the finite reflection group $W$ that defines the building structure of $X$, and hence of $X_k$.  The main idea of this paper is to show that,  using this structure, if the image of a harmonic map is not contained in $\PFk$,  there is a significant loss of energy when we project that harmonic map into $\PFk$.  Since $u_k$ is close to $L_k$ and $\mbox{image}(L_k)\subset \PFk$, such a loss of energy contradicts the energy minimality of $u_k$.  The crux of this paper is a careful analysis of the projection map $\pi_k:X_k \rightarrow \PFk$ in order to derive a quantitative estimate of the energy loss when composing $\pi_k$ and  $u_k$.

\subsection{Organization of this paper}
Section~\ref{sec:prelim} provides references for concepts most relevant to this paper.  These are:
\begin{itemize}
\item  \cite{gromov-schoen},  \cite{korevaar-schoen1}, \cite{korevaar-schoen2} for harmonic map theory into CAT(0) spaces,
\item \cite{kleiner-leeb} for Euclidean buildings, and 
\item \cite{kleiner-leeb}, \cite{lytchak} for ultralimits of metric spaces and Lipschitz maps.
\end{itemize}
Essential details are  briefly summarized in that section.  We recommend having these references readily available when reading this paper.

Section~\ref{sec:proj} discusses  the  subbuilding $\PF$ of a Euclidean building $X$  defined as a union of all apartments parallel to a given flat $F$. The closest point projection map $\pi: X \to \PF$ is carefully analyzed.  We apply this analysis to show that a composition $\pi \circ u$ of the projection map $\pi: X \rightarrow \PF$ with a harmonic map $u$ into $X$ results in a loss of energy.

Section~\ref{sec:noregMeas} is the technical core of the paper. Expanding on the results of Section 3, we demonstrate that a harmonic map $u$ that is close to a homogeneous degree 1 harmonic map $L$ takes most points in the domain  into the subbuilding $\PF$ defined by the flat $F=\text{image}(L)$.  Specifically, we show that ``closeness in $C^0$" implies ``closeness in measure"; ie,~the set of points not mapping to the subbuilding $\PF$ via $u$ can be made arbitrarily small by assuming that $u$ is sufficiently close to a homogeneous degree 1 map $L$. This is analogous to key component (2) of Gromov-Schoen's proof, extended to the general setting considered here.

In Section~\ref{sec:approx}, we construct a sequence of homogeneous degree 1 harmonic maps that approximates a harmonic map at a point.  The argument presented in this section adapts key component (1) of Gromov-Schoen's argument to the present case.

Section~\ref{sec:noregFin} uses the ``closeness in measure" statement of Section \ref{sec:noregMeas} and the approximation of blow up maps by homogeneous degree 1 harmonic maps described in Section~ \ref{sec:approx} to show the local product structure of a  harmonic maps $u$ near order 1 points.

 Section~\ref{sec:proofsofrigdity} uses the local product structure to first prove Theorem~\ref{maintechnical}.  Then the  rigidity results, Theorem~\ref{maintheorem} and Theorem~\ref{maintheorem'}, follow from adapting the classical Bochner techniques described in \cite{gromov-schoen}. 
 
 In Appendix \ref{app:proofof6}, we prove the technical results needed for Section \ref{sec:noregFin}. These results are contained in \cite{gromov-schoen}, \cite{sun}. However, we provide the complete proofs here as the results of this paper require that we better understand the constants, and what they depend on, that appear in their statements.

\subsection*{Acknowledgements} The authors are deeply grateful to A. Lytchak and Y.~Deng for their invaluable insights and enlightening discussions. \\

\tableofcontents


\section{Preliminaries}\label{sec:prelim}

\subsection{CAT(0) spaces}\label{CAT0sec}
A complete CAT(0) space $(X,d)$ generalizes the notion of a Hadamard manifold.  These are geodesic spaces of non-positive curvature where curvature is defined by triangle comparison; particular examples of CAT(0) spaces include Euclidean buildings (the focus of this paper) as well as Hadamard manifolds.  We refer to \cite{bridson-haefliger} for a complete introduction to these spaces.  {For  $P,Q \in X$ and $\lambda \in [0,1]$, we will denote the point on the unique geodesic segment $\overline{PQ}$ connecting $P$ and $Q$ at a distance   $(1-\lambda)d(P,Q)$ from $P$ by 
$
(1-\lambda)P + \lambda Q.
$
}

\subsection{Euclidean Buildings} Euclidean buildings are CAT(0) spaces with extra structure.
In this paper, we use Kleiner and Leeb's notion of Euclidean buildings and refer to \cite{kleiner-leeb} for the precise definition.    The equivalence of this notion and of that   by Tits~\cite{tits} was established by A.~Parreau \cite{parreau}. Here, we only highlight concepts important to this paper.    

Let $\mathbb E^N$ be an $N$-dimensional affine space and  $\partial_{Tits} \mathbb E^N\simeq \Sp^{N-1}$ be its Tits boundary.  Denote by  $\rho:\mathrm{Isom}(\mathbb E^N) \to \mathrm{Isom}(\partial_{Tits} \mathbb E^N)$ the canonical homomorphism which assigns to  each affine isometry its rotational part.  An affine Weyl group $W_{\mathrm{aff}}$ is a subgroup of $\mbox{Isom}(\mathbb E^N)$  generated by reflections and such that its {\bf reflection group} $W := \rho(W_{\mathrm{aff}}) \subset \mbox{Isom}(\partial_{Tits} \mathbb E^N)$ is finite.  The pair  $(\mathbb E^N, W_{\mathrm{aff}})$ is then called a {\bf Euclidean Coxeter complex}.  A {\bf wall} is a hyperplane of $\mathbb E^N$ which is the fixed point set of a reflection in $W_{\mathrm{aff}}$.

Let $(X,d)$ be a CAT(0) space endowed with a structure which makes it into a Euclidean building  modelled   on a Coxeter complex $(\mathbb E^N, W_{\mathrm{aff}})$ (in the sense of \cite[Section 4.1.2]{kleiner-leeb}). 
{We refer to such an $(X,d)$ (or more simply $X$) as a {\bf Euclidean building of type $W$.}
}
  We refer to the integer $N$ as the  {\bf dimension} of $X$.
As a consequence of \cite[Corollary 4.6.2]{kleiner-leeb},   there is a collection $\mathcal A$ of isometric embeddings $\iota : \R^N  \rightarrow X$ satisfying the following two properties: 
\begin{itemize}
\item Every geodesic segment, ray, and line is contained in an image of an isometric embedding of  the collection  (cf~\cite[EB3]{kleiner-leeb}), and 
\item Two isometric embeddings $\iota_1$ , $\iota_2$ of the  collection are compatible in the sense that $\iota_1^{-1} \circ \iota_2$ is a restriction of an isometry in $W_{\mathrm{aff}}$  (cf~\cite[EB4]{kleiner-leeb}) 
\end{itemize}
and such that $\mathcal A$ is the maximal collection satisfying the above two properties.
  We call $\mathcal A$ an {\bf atlas}, $\iota \in \mathcal A$ a {\bf chart} and the image   $A:=\iota(\R^N)$ an {\bf apartment}.

For those less familiar with buildings, we add the following lemma which is probably well-known to the experts.

\begin{lemma} \label{biggeraffinegp}
Let $(X,d)$ be as in the above paragraph.  
\begin{itemize}
\item If a Weyl group $W_{\mathrm{aff}}'$ contains $W_{\mathrm{aff}}$ as a subgroup, then  $(X,d)$ is  also endowed with a structure of a Euclidean building  modelled  on a Coxeter complex $(\mathbb E^N, W_{\mathrm{aff}}')$ with atlas $\mathcal A'=\{\iota \circ w:  \iota \in \mathcal A \mbox{ and } w \in W_{\mathrm{aff}}'\}$.
\item Let $\varphi:\R^N \rightarrow \R^N$ be an orthogonal transformation and define $$
\varphi \cdot W_{\mathrm{aff}} := \{\varphi^{-1} \circ f \circ \varphi:  f \in W_{\mathrm{aff}} \} \ 
\mbox{ and } 
\ \varphi \cdot \mathcal A:= \{\iota \circ \varphi:  \iota \in \mathcal A\}.
$$ 
Then $(X,d)$ is also endowed with a structure of a Euclidean building modelled on the Coxeter complex $(\mathbb E^N, \varphi \cdot W_{\mathrm{aff}})$ with atlas $\varphi \cdot \mathcal A$.
\end{itemize}
\end{lemma}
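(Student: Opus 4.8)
The plan is to verify, in each of the two cases, that the proposed new atlas satisfies the two axioms [EB3] and [EB4] relative to the new Coxeter complex, and that it is maximal among collections with those properties — since by the characterization quoted from \cite[Corollary 4.6.2]{kleiner-leeb} this is exactly what is needed to exhibit a Euclidean building structure of the stated type on $(X,d)$. In both cases the underlying metric space $(X,d)$ is untouched; only the atlas and the model Coxeter complex change. Note first that $(\mathbb E^N, W_{\mathrm{aff}}')$ in the first bullet is genuinely a Euclidean Coxeter complex: its reflection group is $\rho(W_{\mathrm{aff}}')$, which is still finite because (for instance) $W_{\mathrm{aff}}'$ must normalize the lattice of translations it contains, or more simply because the hypothesis implicitly includes that $W_{\mathrm{aff}}'$ is again an affine Weyl group in the sense defined above; and in the second bullet $\varphi \cdot W_{\mathrm{aff}}$ is visibly generated by reflections (conjugates of reflections are reflections) with finite reflection group $\varphi^{-1} \rho(W_{\mathrm{aff}}) \varphi$, so $(\mathbb E^N, \varphi \cdot W_{\mathrm{aff}})$ is a Euclidean Coxeter complex.

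First I would treat the second bullet, which is essentially formal. Each $\iota \circ \varphi$ is an isometric embedding $\R^N \to X$ since $\varphi$ is an isometry of $\R^N$. Axiom [EB3] holds because the images are unchanged: $(\iota \circ \varphi)(\R^N) = \iota(\R^N)$, so the apartments of $\varphi \cdot \mathcal A$ are exactly the apartments of $\mathcal A$, and every geodesic segment, ray, and line is already contained in one. For [EB4], given $\iota_1 \circ \varphi$ and $\iota_2 \circ \varphi$ we compute $(\iota_1 \circ \varphi)^{-1} \circ (\iota_2 \circ \varphi) = \varphi^{-1} \circ (\iota_1^{-1} \circ \iota_2) \circ \varphi$, and since $\iota_1^{-1} \circ \iota_2$ is the restriction of an element of $W_{\mathrm{aff}}$, this composition is the restriction of an element of $\varphi \cdot W_{\mathrm{aff}}$. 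Maximality of $\varphi \cdot \mathcal A$ follows from maximality of $\mathcal A$: the map $\iota \mapsto \iota \circ \varphi$ is a bijection between collections of isometric embeddings, and it carries the pair of properties relative to $(\mathbb E^N, W_{\mathrm{aff}})$ to the pair relative to $(\mathbb E^N, \varphi \cdot W_{\mathrm{aff}})$ (and back, via $\varphi^{-1}$); so if $\varphi \cdot \mathcal A$ were properly contained in a larger such collection $\mathcal B$, then $\mathcal B \circ \varphi^{-1}$ would properly contain $\mathcal A$ and still satisfy the two properties relative to $W_{\mathrm{aff}}$, a contradiction.

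For the first bullet the two axioms are again quick. Every $\iota \circ w$ with $\iota \in \mathcal A$, $w \in W_{\mathrm{aff}}'$ is an isometric embedding $\R^N \to X$ with image $\iota(\R^N)$, so the apartments of $\mathcal A'$ include all apartments of $\mathcal A$ (take $w = \mathrm{id}$); hence [EB3] is inherited from $\mathcal A$. For [EB4], given $\iota_1 \circ w_1$ and $\iota_2 \circ w_2$ in $\mathcal A'$, we have $(\iota_1 \circ w_1)^{-1} \circ (\iota_2 \circ w_2) = w_1^{-1} \circ (\iota_1^{-1} \circ \iota_2) \circ w_2$; since $\iota_1^{-1} \circ \iota_2 \in W_{\mathrm{aff}} \subset W_{\mathrm{aff}}'$ and $W_{\mathrm{aff}}'$ is a group, this lies in $W_{\mathrm{aff}}'$, as required. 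The one genuinely nontrivial point — the main obstacle — is maximality of $\mathcal A'$: one must rule out that adjoining some extra isometric embedding $\iota_0 \notin \mathcal A'$ still keeps [EB3] and [EB4]. The key observation is that if $\mathcal A' \cup \{\iota_0\}$ satisfied [EB4] relative to $W_{\mathrm{aff}}'$, then in particular for every $\iota \in \mathcal A$ the composition $\iota^{-1} \circ \iota_0$ would be the restriction of some $w \in W_{\mathrm{aff}}'$; writing $\iota_0 = \iota \circ w$ would then exhibit $\iota_0$ as an element of $\mathcal A'$, a contradiction — provided $\iota_0(\R^N)$ coincides with some apartment $\iota(\R^N)$ of $\mathcal A$. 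That last provision is where [EB3] enters: the image $A_0 := \iota_0(\R^N)$ is an $N$-flat in $X$ containing, say, a geodesic line $\ell$; by [EB3] for $\mathcal A$ (which $\mathcal A'$ inherits) $\ell$ lies in some apartment $A = \iota(\R^N)$, and one argues — using the standard building fact that two $N$-flats sharing a line must intersect in a wall-bounded region whose interior, together with convexity and the product/cone structure, forces $A_0 = A$ once $A_0$ is itself an apartment of an atlas satisfying [EB4] — that $A_0 = A$. The cleanest way to organize this is: any atlas $\mathcal B \supsetneq \mathcal A'$ satisfying the two properties relative to $W_{\mathrm{aff}}'$ would \emph{a fortiori} satisfy them relative to the possibly-larger group $W_{\mathrm{aff}}'$ but its charts would still be "affine" on overlaps; restricting attention to those charts of $\mathcal B$ whose transition with a fixed $\iota \in \mathcal A$ lies in $W_{\mathrm{aff}}$ recovers a collection containing $\mathcal A$ and satisfying the two properties relative to $W_{\mathrm{aff}}$, hence equal to $\mathcal A$ by maximality, and then the remaining charts of $\mathcal B$ are accounted for by the coset structure $W_{\mathrm{aff}}' / W_{\mathrm{aff}}$, giving $\mathcal B \subseteq \mathcal A'$ after all. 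I would write this maximality argument out carefully, as it is the only place where the building axioms are used in an essential rather than bookkeeping way.
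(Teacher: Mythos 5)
Your overall strategy — verify the Kleiner--Leeb atlas axioms and then argue maximality — is exactly what the paper's (very terse) proof does, and your treatment of the second bullet is complete and correct: the images of charts are unchanged, transition maps conjugate by $\varphi$, and maximality transfers across the bijection $\iota \mapsto \iota\circ\varphi$.

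The gap is in your maximality argument for the first bullet. You correctly reduce to showing that any potential new chart $\iota_0$ has image equal to some apartment $\iota(\R^N)$ with $\iota\in\mathcal A$, but the justification you sketch — that an $N$-flat $A_0$ sharing a geodesic line with an apartment $A$ must equal $A$ — is false: in a nontrivial building two distinct apartments routinely share a geodesic line (already for trees, or for $\R\times(\text{tree})$). The correct mechanism is simpler and is already invoked in the surrounding text of the paper: by \cite[Corollary 4.6.2]{kleiner-leeb}, $N$-flats in $X$ \emph{are} precisely the apartments of the building, so $A_0 = \iota_0(\R^N)$ is automatically of the form $\iota(\R^N)$ for some $\iota\in\mathcal A$ (no geodesic-line argument is needed). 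Then [EB4] applied to the pair $(\iota,\iota_0)$ gives $\iota^{-1}\circ\iota_0 = w|_{\R^N}$ for some $w\in W_{\mathrm{aff}}'$, i.e.\ $\iota_0 = \iota\circ w\in\mathcal A'$ — which is exactly the observation the paper's one-sentence proof records as ``any two charts for an apartment $A$ only differ by a precomposition with an isometry in $W_{\mathrm{aff}}$.'' Your alternative ``cleanest way to organize this'' paragraph (restricting $\mathcal B$ to charts transitioning to $\iota$ by $W_{\mathrm{aff}}$) has the same weakness: it doesn't address charts whose images are disjoint from $\iota(\R^N)$ or meet it in a lower-dimensional set, so the ``recovered collection'' is not obviously one to which maximality of $\mathcal A$ applies. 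Replacing the geodesic-line claim with the $N$-flat\,$=$\,apartment fact closes the argument.
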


\begin{proof}
It is straightforward to check conditions EB1 $\sim$ EB4 of \cite[Section 4.1.2]{kleiner-leeb} and note that any two charts  for an apartment $A$ only differ by a precomposition with an isometry in $W_{\mathrm{aff}}$. 
\end{proof}

\begin{remark} \label{rem:varphiW}
 Let $\varphi:\R^N \rightarrow \R^N$ be an orthogonal transformation.
By Lemma~\ref{biggeraffinegp}, $(X,d)$ is a Euclidean building of type $\varphi \cdot W$ 
(also often described simply as a Euclidean building of type $W$).  
Note that  changing the building structure does not change $(X,d)$ as a CAT(0) space and does not affect the behavior of harmonic maps into $(X,d)$.  This fact is used in Section~\ref{sec:projectionsetup} and Section~\ref{subsec:inmeasure}.
\end{remark}

\subsection{Tangent cones} \label{sec:tangentcone}

Let $\tilde \angle_x(y,z)$  denote the angle at $x$ of the
comparison triangle in $\R^2$. If $y', z'$ are interior points on the segments $\overline{xy}$, $\overline{xz}$,
then $\tilde \angle_x (y',z') \leq \tilde \angle_x(y,z)$. Thus, $\lim_{y' \rightarrow x, z' \rightarrow x}\tilde \angle_x(y',z')$ exists, and we denote it by $\angle_x(y,z)$ (cf~\cite[Section 2.1.3]{kleiner-leeb}).
\begin{definition} \label{geodesicgerms}
Two geodesics $c_1, c_2$ emanating from a common point $x\in X$ are said to be equivalent if  $\angle_x(c_1, c_2)=0$.  A {\bf geodesic germ} at $x$ is an equivalence class of geodesics emanating from $x$.  The space of geodesic germs at $x$ along with the distance function defined by $\angle_x$ is a complete metric space by \cite[Lemma 4.2.2]{kleiner-leeb} and defines  the {\bf space of directions} $\Sigma_xX$.  By \cite[Section 4.2.2]{kleiner-leeb}, $\Sigma_xX$ is a spherical building modelled on $(\Sp^{N-1}, W)$. 
\end{definition}

\begin{definition} \label{def:tangentcone}
For $x \in X$, the {\bf tangent cone} $(T_x X,d_x)$ is a metric cone over $\Sigma_x X$.  Denote the vertex of $T_{x} X$ by  $\mathsf{O}$.  Any element of $T_{x} X \backslash \mathsf{O}$ can be written as
$([\gamma], t)$
where $[\gamma]$ is a geodesic germ at $x$ and $t \in (0,\infty)$. 
\end{definition}

\begin{lemma}
If $X$ is a Euclidean building of type $W$, then the tangent cone $(T_xX,d_x)$ is a Euclidean building of type $W$.
\end{lemma}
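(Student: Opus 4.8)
The statement asserts that if $X$ is a Euclidean building of type $W$, then so is its tangent cone $(T_xX, d_x)$ at any point $x$. The plan is to combine the two structural facts about $T_xX$ already recorded in the excerpt: by Definition~\ref{def:tangentcone} it is the Euclidean cone over the space of directions $\Sigma_xX$, and by Definition~\ref{geodesicgerms} (citing \cite[Section 4.2.2]{kleiner-leeb}) $\Sigma_xX$ is a spherical building modelled on $(\Sp^{N-1}, W)$. The theorem should then follow from the general principle — due to Kleiner--Leeb — that the Euclidean cone over a spherical building of type $W$ is a Euclidean building of type $W$. So the first step would be to invoke \cite[Theorem 4.9.3]{kleiner-leeb} (or the corresponding statement in \cite{parreau}), which says precisely that the Euclidean cone $C(B)$ over a thick spherical building $B$ modelled on $(\Sp^{N-1},W)$ carries the structure of a Euclidean building modelled on the associated Euclidean Coxeter complex $(\mathbb E^N, W_{\mathrm{aff}})$, with apartments the cones over the apartments (chambers at infinity) of $B$.

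The substance of the argument is then bookkeeping to match hypotheses. First I would verify that $\Sigma_xX$ is indeed a spherical building of the type required by the cone theorem — this is already asserted in Definition~\ref{geodesicgerms}, so it reduces to a citation, but one should note that the relevant affine Weyl group $W_{\mathrm{aff}}$ for $T_xX$ is generated by the linear reflections whose rotational parts generate $W$ together with (if $X$ is thick) no additional translations, so $T_xX$ is a building of type $W$ in the sense used in this paper (Section~\ref{sec:prelim}). Second, one checks the atlas: for each apartment $A \subset X$ through $x$, the set of directions at $x$ of geodesics in $A$ is an apartment $\Sigma_xA \cong \Sp^{N-1}$ of $\Sigma_xX$, and its Euclidean cone $C(\Sigma_xA) \cong \R^N$ is an apartment of $T_xX$; the transition maps between two such charts are restrictions of elements of $W_{\mathrm{aff}}$ because the corresponding transition maps at the level of $\Sigma_x X$ lie in $W$ and cone off linearly. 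This verifies the axioms EB1--EB4 of \cite[Section 4.1.2]{kleiner-leeb} for $(T_xX, d_x)$.

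The main obstacle — and the reason this lemma is stated as a lemma rather than simply cited — is not any deep geometry but the compatibility of conventions: Kleiner--Leeb's cone construction and their building axioms must be aligned with the specific formulation adopted in this paper (via Lemma~\ref{biggeraffinegp} and Remark~\ref{rem:varphiW}, one has freedom to enlarge $W_{\mathrm{aff}}$ or conjugate it, which is exactly the flexibility one wants when passing to a tangent cone). In particular, $T_xX$ is naturally modelled on a Euclidean Coxeter complex whose affine Weyl group is the \emph{linear} Weyl group generated by $W$'s reflections through the origin — since the cone vertex $\mathsf O$ is fixed — and one invokes Lemma~\ref{biggeraffinegp} to regard this as a building of type $W_{\mathrm{aff}}$ (or any larger affine group) as needed downstream. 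Thus the proof is essentially: cite the space of directions is a spherical building of type $W$; cite that the Euclidean cone over such a building is a Euclidean building; observe the apartment structure transfers as described; and invoke Lemma~\ref{biggeraffinegp} to normalize the model Coxeter complex to type $W$. I would expect the write-up to be just a few lines referring to \cite{kleiner-leeb}.
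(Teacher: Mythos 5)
Correct, and this is essentially the same route as the paper: the paper sets $W_0 := \mathrm{Isom}_0(\mathbb E^N) \cap \rho^{-1}(W)$ (the origin-fixing isometries with rotational part in $W$, exactly the linear reflection group you describe) and checks via the apartment correspondence of \cite[Lemma 4.2.3]{kleiner-leeb} that $T_xX$ is modelled on $(\mathbb E^N, W_0)$, hence is of type $W$. The only differences are cosmetic: the paper cites Lemma~4.2.3 rather than a general cone-over-spherical-building result, and it has no need of your final appeal to Lemma~\ref{biggeraffinegp}, since $W_0$ already has rotational part $W$ by construction.
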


\begin{proof}
Let $\mbox{Isom}_0(\mathbb E^N)$ be the stabilizer subgroup of the origin and let $W_0 = \mbox{Isom}_0(\mathbb E^N) \cap \rho^{-1}(W)$.  
Using  \cite[Lemma 4.2.3]{kleiner-leeb}, it is straightforward to check that $T_xX$ is a Euclidean building modelled on $(\mathbb E^N, W_0)$ and thus $T_xX$ is a Euclidean building of type $W$.
 \end{proof}

If $A$ is an apartment of $X$ with $x \in A$, then $T_xA$ is an apartment of $T_xX$.  Conversely, every apartment of $T_xX$ is of the form $T_xA$ for an apartment $A$ in $X$ containing the point $x$ (cf~\cite[Lemma 4.2.3]{kleiner-leeb}).  

We remark that the tangent cone $T_xX$ is variously called the tangent cone (in \cite{kleiner-leeb}) or the geodesic cone $C_xX$ (in \cite{lytchak}, where a much more general notion of ``tangent cones" is defined).  We will see more notions from \cite{lytchak} in Section~\ref{sec:proj}.

\subsection{Harmonic maps}
The theory of harmonic maps into complete CAT(0) spaces was first developed in \cite{gromov-schoen} and generalized in \cite{korevaar-schoen1,korevaar-schoen2,jost}.  We refer the reader to these papers for a more complete introduction to harmonic maps into CAT(0) spaces.    In this paper, we are only interested in the case when the target space is a Euclidean building.

These papers also introduce the Sobolev space $W^{1,2}(\Omega,X)$ of finite energy maps from a Riemannian domain $(\Omega,g)$ to a CAT(0) space $X$.  For a map $u \in W^{1,2}(\Omega,X)$, they generalize  the energy density function $|\nabla u|^2$ and the directional energy density function $|u_*(V)|^2$ for a  Lipschitz vector field $V$ defined on $\Omega$. We reference \cite[Section 1]{korevaar-schoen1} for precise definitions.    Note that these functions depend on the  domain metric $g$, but we suppress $g$ from the notation of the energy density function whenever it is clear from the context which domain metric we are using.  Otherwise, we write $|\nabla u|_g^2$. Furthermore, we will write 
$$\left|\frac{\partial u}{\partial x_i}\right|^2
$$ to denote the directional energy density with respect to the coordinate vector field $\frac{\partial}{\partial x_i}$ and the domain metric $g$, although this is  denoted by $\left|u_*(\frac{\partial}{\partial x_i})\right|^2$ in \cite{korevaar-schoen1}.

\begin{definition}
For $u:(M,g) \to (X,d)$ and a fixed $\Omega \subset M$, the {\bf energy of $u$ on $\Omega$ in the metric $g$} is denoted
\[
{^g}E^u[\Omega]:=\int_\Omega|\nabla u|^2d\mu_g
\]where $\mu_g$ denotes the volume measure with respect to $g$. We let $\deltae$ denote the Euclidean metric and when $g = \deltae$ we remove the exponent and write $E^u[\Omega]$.
\end{definition}

To define harmonic maps, we use the notion of the trace of $u$, for $u\in W^{1,2}(\Omega,X)$, as defined in \cite{korevaar-schoen1}. We denote the space of admissible maps $W^{1,2}_u(\Omega, X):= \{ h \in W^{1,2}(\Omega, X) : d(u,h) \in W^{1,2}_0(\Omega)\}$. 

\begin{definition}\label{mindef} 
A  map $u:\Omega \rightarrow X$ is {\bf harmonic} if, for every $p \in \Omega$, there exists $r>0$ such that  the restriction $u|B_r(p)$  
minimizes energy amongst maps in
$W^{1,2}_u(B_r(p),X)$.
\end{definition} The existence and uniqueness of energy minimizers from Riemannian domains into CAT(0) spaces was established in \cite{korevaar-schoen1}.

\subsubsection{The order function}

Following \cite{gromov-schoen}, for a map $u\in W^{1,2}(\Omega , X)$, a fixed point $p \in \Omega$, and $\sigma >0$ such that $B_\sigma(p) \subset \Omega$,  we let 
\begin{align*}
E^u(\sigma):= \int_{B_\sigma(p)}|\nabla u|^2 \, d\mu_g \ \
\mbox{ and } \ \
I^u(\sigma):= \int_{\partial B_\sigma(p)}d^2(u, u(p))\, d\Sigma_g.
\end{align*}
We define the order of $u$ at $p$, at scale $\sigma>0$, by
\begin{equation}\label{eq:ordscale}
\text{Ord}^u(p,\sigma):=e^{c_1 \sigma^2} \frac{\sigma E^u(\sigma)}{I^u(\sigma)}
\end{equation}where $c_1$ depends on the $C^2$-estimates of the metric $g$. We are presuming here that $I^u(\sigma)$ is not zero, a fact which follows easily for harmonic maps by modifying the arguments of \cite[Section 2]{gromov-schoen} to all CAT(0) spaces. Following these arguments further implies that for appropriately chosen $c_1$, $\text{Ord}^u(p,\sigma)$ is monotone non-decreasing in $\sigma$ for all harmonic maps $u$. It therefore makes sense to define the order at a point.

\begin{definition}
Let $u:\Omega \to X$ be a harmonic map. Then the {\bf order of $u$ at $p \in \Omega$} is given by
\[
\text{Ord}^u(p):= \lim_{\sigma \to 0}\text{Ord}^u(p,\sigma).
\]
\end{definition}
Since $p \mapsto \text{Ord}^u(p, \sigma)$ is continuous for a fixed $\sigma$, the map $p \mapsto \text{Ord}^u(p)$ is upper semi-continuous.

\subsection{Tangent maps and blow up maps} \label{subsec:rescaling}
\label{ss:pullback}
We follow  \cite[Section 3]{gromov-schoen} to construct   homogeneous  maps associated to harmonic maps which we call {\bf tangent maps}.  Because we want to consider Euclidean buildings that are not necessarily locally compact, we also rely on the notion of {\it convergence in the pullback sense},  introduced in \cite{korevaar-schoen2}.    This is a generalization of the Arzela-Ascoli theorem for a sequence of maps  into the same target space.  Convergence in the pullback sense is defined for a sequence of maps where the target spaces  may be different and not necessarily compact.  {We summarize this notion in Section~\ref{sec:limitspace} below, but refer to  \cite[Section 3]{korevaar-schoen2} for more details.}

Consider a harmonic map $u:(\Omega,g) \to (X,d)$  and $p \in \Omega$. Choose local normal coordinates centered at $p$ and consider the restriction $u:(\B_r(0),g) \to (X,d)$. 
For $0<\sigma<1$, define
\begin{equation}\label{eq:usigmamap}
u_\sigma:(B_1(0),g_\sigma) \to (X, d_\sigma)
\end{equation}
where
\[
u_\sigma(x) := u\left(\sigma x\right), \ \  
g_\sigma(x) :=g\left(\sigma x\right),
\]
and
\[
d_\sigma(P,Q):=\left(\frac{\sigma^{n-1}}{I^u(\sigma)}\right)^{1/2}d(P,Q).
\]

 We will refer to $u_\sigma$ as the {\bf (Gromov-Schoen) blow up map}.  A computation involving  change of variables (cf~\cite[Section 3]{gromov-schoen}) shows that these maps have uniformly bounded  energy $E^{u_\sigma}(1) \leq 2\alpha$ where $\alpha =\text{Ord}^u(p)$ for $\sigma>0$ sufficiently small.  Thus, \cite[Theorem 2.4.6]{korevaar-schoen1} implies that these maps have uniform Lipschitz estimates in any compactly contained subsets of $B_1(0)$.  By \cite[Proposition 3.7]{korevaar-schoen2} and \cite[Theorem 3.11]{korevaar-schoen2}, for any sequence  $u_k=u_{\sigma_k}$, there exists a subsequence (which we will still denote by $u_k$) that converges locally uniformly in the pullback sense (cf~Section~\ref{sec:limitspace} below) to a limit  map $u_*:B_1(0) \rightarrow (X_*,d_*)$ into a CAT(0) space. 
 Furthermore, the energy density measures and the directional energy measures of $u_k$ converge weakly to those of $u_*$.  Moreover, following  \cite[last paragraph in the proof of Proposition 3.3]{gromov-schoen}, we see that $u_*$ is a {\it nonconstant homogeneous map of degree $\alpha$} (cf~Section~\ref{sec:homogeneous} below). The map  $u_*$ will be  referred to as a {\bf tangent map}.
The rescaling and the limit  preserve the order at the center. That is, $\alpha=\text{Ord}^u(p)=\text{Ord}^{u_\sigma}(0)= \text{Ord}^{u_*}(0)$.

\subsection{Convergence in the pullback sense}
\label{sec:limitspace}
We give a  brief summary of the limit space construction of  \cite[Section 3]{korevaar-schoen2} and explain the notion of convergence in the pullback sense.  

Following the notation of \cite[Section 3]{korevaar-schoen2}, let $\Omega_0=B_1(0)$ and  iteratively define 
$\Omega_{i+1}:=\Omega_i \times \Omega_i \times [0,1]$,  inclusion maps $\Omega_i \to \Omega_{i+1}$ by $x \mapsto (x,x,0)$  and $\Omega_\infty:= \bigcup \Omega_i$.
Next, let $d_\infty$ be a pseudodistance function defined on $\Omega_\infty \times \Omega_\infty$, denote its restriction to $\Omega_i \times \Omega_i$ by $d_i$, and assume
\begin{equation}
\label{eqn:pullback}
d_{i+1}^2 (z,(x,y,\lambda)) \leq (1-\lambda)d^2_{i+1}(z,(x,x,0))+ \lambda d_{i+1}^2(z,(y,y,0)) - \lambda(1-\lambda) d_i^2(x,y)
\end{equation}
for $x,y \in \Omega_i$, $z \in \Omega_{i+1}, \lambda \in [0,1]$.
Let $Z$ be 
  the metric completion of the quotient metric space 
  \newcommand{\bigslant}[2]{{\raisebox{.2em}{$#1$}\left/\raisebox{-.2em}{$#2$}\right.}}
$\bigslant{\Omega_\infty}{\sim}$   of  $\Omega_\infty$ where  $x \sim y$ if and only if   $d_\infty(x,y)=0$.  The assumption \eqref{eqn:pullback} implies that $Z$ is a CAT(0) space.

For each element  $u_k=u_{\sigma_k}$ of a sequence of blow up maps defined  in Section~\ref{subsec:rescaling}, let $u_{k,0}=u_k$ and iteratively define $u_{k,i+1}:\Omega_{i+1} \rightarrow X_k :=(X, d_k)$ from $u_{k,i}:\Omega_i \to X_k$ by setting
\[
u_{k,i+1}(x,y,\lambda) = (1-\lambda)u_i(x) + \lambda u_i(y)\text{ (cf Section \ref{CAT0sec})}.
\] 
Let $d_{k,i}$ be the pullback pseudodistance of the map $u_{k,i}$  defined on $\Omega_i \times \Omega_i$.   Then $d_{k,i+1}$ and $d_{k,i}$ satisfies (\ref{eqn:pullback}) by the CAT(0) triangle inequality in $X_k$. The  pseudodistance  $d_{k,\infty}$  on $\Omega_\infty \times \Omega_\infty$ is defined by  setting $d_{k,\infty}|\Omega_i \times \Omega_i=d_{k,i}$.
Similarly, define $u_{*,i}$, $d_{*,i}$, and $d_{*,\infty}$ starting from  $u_{*,0}=u_*$.
When we say $u_k$ {\bf converges locally uniformly in the pullback sense} to $u_*$, we mean that $d_{k,i}$ converges locally uniformly to the pullback pseudodistance $d_{*,i}$. 
  In this case, $X_*$ is (isometric to) the metric completion of the quotient metric space 
  $\bigslant{\Omega_\infty}{\sim}$   of  $\Omega_\infty$ where $x \sim y$ if and only if   $d_{*,\infty}(x,y)=0$. The map $u_*$  is the composition of the inclusion $\Omega \hookrightarrow \Omega_\infty$ followed by the  natural projection map of 
  $\Omega_\infty \to \bigslant{\Omega_\infty}{\sim} \subset X_*$.
  
\subsection{Ultralimits of metric spaces and maps}
\label{sec:ultra}
For details on  ultrafilters and ultralimits, we refer the reader to \cite[Section 2.4]{kleiner-leeb} and  \cite[Section 3.3]{lytchak}.  We only give a quick summary here:  
\begin{itemize}
\item Let  $(X_k,d_k,\star_k)$ be a sequence of  pointed metric spaces.    The ultralimit 
$$
(X_\omega,d_\omega):=\wlim (X_k,d_k,\star_k)
$$ is the quotient metric space of  the set of all sequences $(x_k)$ of points $x_k \in X_k$ with $\sup\{d_k(x_k, \star_k)\}<\infty$ with respect to  the pseudometric $\tilde d_\omega((x_k), (y_k)):= \wlim (d_k(x_k,y_k))$.  In other words, a point of $X_\omega$ is an equivalence class $[(x_k)]$ where $d_\omega([(x_k)],  [(y_k)])=\tilde d_\omega((x_k),(y_k))$.
\item Let $f_k: (\widehat X_k, \widehat d_k) \rightarrow (X_k,d_k)$ be a sequence of maps between metric spaces with a uniform local Lipschitz bound.  The ultralimit 
$$
f_\omega:=\wlim f_k: (\widehat X_\omega, \widehat d_\omega)  \to  (X_\omega, d_\omega)
$$  is the  locally Lipschitz map defined by $f_\omega(p):=[(f_k(p))]$.
\end{itemize}

Let $u_k:=u_{\sigma_k}:B_1(0) \rightarrow (X,d_k)$ be the sequence of blow up maps converging locally uniformly to $u_*:B_1(0) \to (X_*,d_*)$ as in Section~\ref{subsec:rescaling}.  Then there exists an isometric totally geodesic embedding 
$
\iota: X_* \to X_\omega 
$
such that  
\[
u_\omega =\iota \circ u_*.
\] 
Indeed, we construct $\iota$ by first defining $\hat \iota:\Omega_\infty \rightarrow X_\omega$ by setting $\hat \iota(x)=[(u_{k,i}(x))_{k=1}^{\infty}]$ for $x \in \Omega_i$.  Then 
$$ d_\omega(\hat \iota(x), \hat \iota(y)) = \wlim d_{k} (u_{k,i}(x), u_{k,i}(y)) = \lim_{k \rightarrow \infty} d_{k}(u_{k,i}(x), u_{k,i}(y)) =d_{*,i}(x,y).
$$
In particular, if $x\sim y$ then $\hat\iota(x)=\hat\iota(y)$.  Thus,   $\hat\iota$ descends to the quotient $\bigslant{\Omega_\infty}{\sim}$ and  can be  isometrically extended to its metric completion to define $\iota:X_* \to X_\omega$.  (See also \cite{kim}.)
\begin{remark}
\label{KLTheorem5.1}
Based on the above paragraph, we conclude the $u_k$  converges locally uniformly in the pullback sense to $u_\omega$.  {Thus, we can always replace $u_*$ by $u_\omega$.}  This fact is particularly useful because,  given a harmonic map $u$ to a Euclidean building $X$ of type $W$,   its tangent map  $u_\omega$ at $x \in \Omega$ maps into a Euclidean building $X_\omega=\wlim X_k$ of type $W$ (cf~\cite[Theorem 5.1.1]{kleiner-leeb}).
\end{remark}

\subsection{Homogeneous harmonic maps}
\label{sec:homogeneous}
As discussed in Section~\ref{ss:pullback},  a tangent map $u_*$ (and  hence $u_\omega$ of Remark~\ref{KLTheorem5.1}) is a homogeneous map.  In this section, we show that the image of a  homogeneous degree 1 map is contained in a single apartment.

\begin{definition}
Let $v:{B_r(0)}\subset \R^n \to (X,d)$ be such that $v \in W^{1,2}(B_{r}(0),X)$. We say $v$ is {\bf homogeneous degree $\alpha$}  if
$\text{Ord}^u(0,\sigma)=\alpha$ for all $\sigma \in (0,r)$.
\end{definition}

\begin{remark}
Since the proof of \cite[Lemma 3.2]{gromov-schoen} holds in any NPC space,   a homogeneous degree $\alpha$ map  $v:{B_r(0)}\subset \R^n \to (X,d)$ satisfies the following properties: For all $x \in \partial B_r(0)$ and $\lambda \in [0,1)$,
\begin{itemize}
\item
$
d(v(\lambda x), v(0)) = \lambda^\alpha \,d\left(v(x),v(0)\right)$.
\item
 The image of $\lambda \mapsto v(\lambda x)$ is a geodesic in $X$. 
 \end{itemize}
A map satisfying these two properties is referred to as  {\it intrinsically} homogeneous in \cite{gromov-schoen}.)
\end{remark}

A {\bf flat} (or more descriptively, an $m$-flat) $F$ of $X$ is an image of an isometric embedding $\iota_F:\R^m \rightarrow X$.   
A {\bf wall in $X$} is an image of a wall in $\R^N$ under an atlas.  A geodesic line is an example of a 1-flat and a wall is an example of an ($N$-1)-flat.
Every flat is contained in an apartment (cf~\cite[Proposition 4.6.1]{kleiner-leeb}), and $N$-flats are precisely the apartments of $X$ (cf~\cite[Corollary 4.6.2]{kleiner-leeb}).
The next proposition shows that the image of a homogeneous degree 1 harmonic map  into $X$ is, as expected, a flat.

\begin{proposition}\label{prop:flats}

If $L:\B_1(0) \to X$ is a homogeneous degree 1 harmonic map,  then there exists an $r_0 \in (0,1)$, an apartment $A$,  and an extension of $L|B_{r_0}(0)$ as a homogeneous degree 1 harmonic map $\hat L:\R^n \rightarrow A \subset X$.  In particular,    $F:=\hat L(\R^n)$ is a flat. 
\end{proposition}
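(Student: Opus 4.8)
The plan is to exploit the homogeneity of $L$ to reduce to a statement about the space of directions $\Sigma_{L(0)}X$, which we know is a spherical building of type $W$, and then to show that the ``derivative'' of $L$ at $0$ is an isometric embedding of a round sphere $\Sp^{n-1}$ into $\Sigma_{L(0)}X$; the cone over this image is then a flat through $L(0)$, into which $L|B_{r_0}(0)$ maps, and the building axioms promote that flat to live inside an apartment. First I would record that since $L$ is homogeneous of degree $1$, for each unit vector $x\in\partial B_1(0)$ the ray $\lambda\mapsto L(\lambda x)$ is a constant-speed geodesic in $X$ emanating from $L(0)$, of speed $c(x):=d(L(x),L(0))$. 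Set $P=L(0)$. Define $\phi:\partial B_1(0)\to T_PX$ by sending $x$ to the point $([\gamma_x], c(x))$, where $[\gamma_x]\in\Sigma_PX$ is the germ of $\lambda\mapsto L(\lambda x)$; homogeneity guarantees this is well defined and that $L(\lambda x)$ corresponds under the exponential-type identification to $\phi(\lambda x) = ([\gamma_x],\lambda c(x))$, so understanding $\phi$ understands $L$ near $0$.

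The key step is to show $\phi$ (equivalently, the map $x\mapsto[\gamma_x]$ together with the conformal factor $c(x)$) realizes an isometric, totally geodesic embedding of a Euclidean ball. The natural route: the energy-minimality and degree-$1$ homogeneity of $L$ force a \emph{balancing / directional-energy} condition, exactly as in the argument of \cite[Section 3]{gromov-schoen} (their Proposition on homogeneous degree $1$ maps and \cite[Lemma 3.2]{gromov-schoen}, which the excerpt already invokes in Section~\ref{sec:homogeneous}). Concretely, a homogeneous degree $1$ harmonic map has the property that for any two directions the function $\lambda,\mu\mapsto d(L(\lambda x),L(\mu y))$ is convex and homogeneous, and the harmonic (balancing) equation forces the comparison angles at $P$ to be \emph{exact} angles, i.e. $d(L(x),L(y))^2 = c(x)^2+c(y)^2-2c(x)c(y)\cos\angle_P(\gamma_x,\gamma_y)$ with the triangle at $P$ flat. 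Running this over all pairs shows the image $\Phi:=\{[\gamma_x]:x\in\partial B_1(0)\}\subset\Sigma_PX$ is, with the intrinsic metric, a convex subset isometric to a round sphere (or a hemisphere/convex spherical region, which in a spherical building of type $W$ is contained in an apartment of $\Sigma_PX$), and that $c$ is the restriction of a linear functional; hence the cone over $\Phi$ with the appropriate affine structure is an $n$-flat $F'\ni P$ in $T_PX$ and $\phi$ maps $B_1(0)$ isometrically onto a neighborhood of $\mathsf O$ in $F'$.

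Next I would transfer this from the tangent cone back to $X$. For $r_0\in(0,1)$ small, the map $L|B_{r_0}(0)$ has image in a small ball about $P$; since geodesics from $P$ in distinct directions of $\Sigma_PX$ remain geodesics in $X$ and the spherical-building structure of $\Sigma_PX$ is the ``infinitesimal'' shadow of the Euclidean-building structure of $X$, the flat $F'\subset T_PX$ exponentiates to an isometrically embedded ball, which by \cite[Corollary 4.6.2, Proposition 4.6.1]{kleiner-leeb} extends to a genuine flat $F\subset X$ through $P$; equivalently, the apartment $A\subset X$ with $T_PA\supset F'$ contains $L(B_{r_0}(0))$. Because $L$ restricted to each ray is already a full geodesic line's worth of data (homogeneity lets us rescale), the assignment $x\mapsto\phi(x)$ defines, after identifying $A\cong\R^n$ via a chart, an \emph{affine} map $\hat L:\R^n\to A$ extending $L|B_{r_0}(0)$, and $\hat L$ is clearly still homogeneous degree $1$ and harmonic (an affine map into a single apartment is harmonic, being locally a harmonic map into $\R^n$). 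Setting $F=\hat L(\R^n)$ gives the flat.

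\emph{Main obstacle.} The crux is the second step: pinning down that the image $\Phi\subset\Sigma_PX$ of directions is exactly a totally geodesic round sphere and not merely a convex spherical polytope, and that $c$ is linear — i.e. ruling out a homogeneous degree $1$ harmonic map whose image is a ``bent'' $1$-Lipschitz cone. This is precisely where harmonicity (energy minimality), not just homogeneity, must be used, via the first variation / balancing formula for directional energies; in a general (non-locally-finite) Euclidean building one must also check that convex subsets of the spherical building $\Sigma_PX$ that arise this way lie in a single apartment, which follows from the building axioms for $\Sigma_PX$ as a spherical building of type $W$ (convex subsets of spherical buildings lie in apartments). I expect the bulk of the work to be adapting \cite[Lemma 3.2]{gromov-schoen} and the surrounding ``intrinsically homogeneous $\Rightarrow$ image is a flat'' argument to the CAT(0)/Euclidean-building setting, which is essentially routine given that \cite{gromov-schoen} already carries it out for locally finite targets and the excerpt notes \cite[Lemma 3.2]{gromov-schoen} holds in any NPC space.
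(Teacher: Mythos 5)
Your overall strategy tracks the paper's: both start from the Gromov--Schoen factorization of a homogeneous degree~1 harmonic map into a linear map composed with an isometric totally geodesic embedding, both pass to the tangent cone $T_PX$, locate a flat there inside an apartment $T_PA$, and then pull back to an apartment $A$ of $X$. However, you have misidentified where the difficulty lies. What you flag as the ``main obstacle'' --- showing that the image of directions is a totally geodesic round sphere and $c$ is linear --- is exactly the part the paper imports verbatim from the initial portion of the proof of \cite[Theorem 3.1]{gromov-schoen}, noting explicitly that this step never uses local compactness. So that is already available off the shelf.

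The genuine subtlety, which your write-up glosses over, is the transfer from $T_PX$ back to $X$. You assert that the flat $F'\subset T_PX$ ``exponentiates to an isometrically embedded ball'' and that ``homogeneity lets us rescale'' so that $L$ already carries a full geodesic line's worth of data along each ray. Neither of these is correct as stated. Homogeneity of $L$ on $B_1(0)$ gives only geodesic segments emanating from $P$, not lines, and in a Euclidean building that is not locally finite a geodesic segment does not uniquely extend past its endpoint. Consequently a geodesic germ $[\gamma_x]\in\Sigma_PX$ does not determine the segment $\lambda\mapsto L(\lambda x)$ beyond an initial coincidence; the paper even gives the explicit example of a geodesic line $\mathscr L$ with $T_p(\mathscr L(\R))=T_p(L(\R))$ but $\mathscr L\neq L$. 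This is precisely why the conclusion only gives agreement of $\hat L$ with $L$ on some $B_{r_0}(0)$, with $r_0$ possibly much smaller than $1$: the lift $\hat J$ of $\hat H=\log_P\circ J$ back through the apartment chart need not agree with $J$ globally, only on a small neighborhood of the origin. Establishing that initial agreement is where the paper has to work, invoking \cite[Lemma 4.1.2]{kleiner-leeb} and a finite convex-hull / Weyl cone argument from \cite[Section 4.4]{kleiner-leeb} to produce a neighborhood of $P$ on which $\hat J(\mathscr V)$ and $J(\mathscr V)$ coincide. Your proposal has no mechanism to supply this step, and the heuristic that the spherical building at $P$ is the ``infinitesimal shadow'' of $X$ does not substitute for it, since it is exactly the failure of exponentiation to be injective on germs that causes trouble. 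You would need to add the initial-coincidence argument to make the proof go through.
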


\begin{proof}This proof roughly follows that of \cite[Theorem 3.1]{gromov-schoen}, but we need to account for the more pathological behavior of the non-locally compact target $X$.
Following the initial part of the proof of \cite[Theorem 3.1]{gromov-schoen}, we  deduce that there exists a linear map $v$ and an isometric and  totally geodesic embedding $J$ such that $L= J \circ v|B_1(0)$.  Note that this part of  their proof does not use their hypothesis that $X$ is locally compact.

If $X$ is a geodesic cone in Euclidean space (as they assume in  \cite[Theorem 3.1]{gromov-schoen}),  we can easily  extend the map $J$, currently defined only on $\mathscr V:=v(B_1(0)) \subset \R^m$ 
  to an isometric and totally geodesic embedding  defined on all of $\R^m$.  Since we are not assuming $X$ is a geodesic cone, some care must be taken to prove an analogous statement.  
  
  Let $p=L(0)$.    Since $T_{p}X$ is a cone, we can   extend $H:=\log_{p} \circ J:\mathscr V \rightarrow T_{p}X$ 
  to a map $\hat H:\R^m \rightarrow T_{p}X$.  Since $J$ is an isometric and totally geodesic  embedding of 
 $\mathscr V$,
  this means that 
  $H$, and hence  $\hat H$, defines an isometric and  totally geodesic embedding.  By \cite[Proposition 4.6.1]{kleiner-leeb} and \cite[Lemma 4.2.3]{kleiner-leeb}, the flat  $\hat H(\R^m)$ is contained in an apartment $T_{p}A$ of $T_{p}X$; ie~$\hat H:\R^m \to T_{p}A$.
Since $\log_{p}$ restricted to $A$ is an isometry with inverse $(\log_p|A)^{-1}:T_pA \to A$, we can lift $\hat H$ to define an isometric and totally geodesic map $\hat J=(\log_{p}|A)^{-1} \circ  \hat H:\R^m \to A$.  

\begin{equation*}
\begin{tikzcd}
    &  A \arrow[d, "\log_{p}" ] \\
  \R^m  \arrow[ur, "\hat J"] \arrow[r, "\hat H" ]
& T_{p}A \end{tikzcd}
\end{equation*}
Note that it is not necessarily true that $\hat J=J|\mathscr V$.
For example, if $L=J\circ v:(-1,1) \to X$ is a geodesic segment with $p=L(0)$, then there is a geodesic line $\mathscr L:\R \to X$  with $T_p(\mathscr L(\R)) = T_p(L(\R))$.  Since geodesic segments does not {\it uniquely} extend to a geodesic line, this does not imply $\mathscr L$ and $L$ agree on $(-1,1)$.  On the other hand,  $\angle_p(\mathscr L(t), L(t))=0$ for $|t| >0$ (cf~Section~\ref{sec:tangentcone} for the definition of $\angle_p$). By \cite[Lemma 4.1.2]{kleiner-leeb}, $\mathscr L$ and $L$ initially coincide; ie~there exists  $r_0>0$ such that $\mathscr L|(-r_0,r_0)= L|(-r_0,r_0)$ and $\mathscr L$ is an extension of $L|(-r_0,r_0)$.

For the general case, we will prove that $\hat J$ agrees with $J$ in a small neighborhood of the origin; that is, $\hat J$ is an extension of $J|v(B_{r_0}(0))$ for $r_0>0$ sufficiently small:
\begin{equation*}
\begin{tikzcd}
    &  A \cap B_\epsilon(p) \arrow[d, "\log_{p}" ] \\
 v(B_{r_0}(0))  \arrow[ur, "\hat J=J"] \arrow[r, "\hat H=H" ]
& T_pA \cap B_\epsilon(\mathsf O) \end{tikzcd}
\end{equation*}
Following the arguments of   \cite[Section 4.4]{kleiner-leeb} regarding Weyl cones in $X$, we pick finitely many points in $\Sigma_p(\hat J(\mathscr V)) = \Sigma_p(J(\mathscr V))$ whose convex hull is $\Sigma_p(\hat J(\mathscr V)) = \Sigma_p(J(\mathscr V))$.  Then the convex hull of the corresponding segments is the convex set $\hat J(\mathscr V) \cap J(\mathscr V)$ and is a neighborhood $\mathscr U$ of $p$ in $\hat J(\mathscr V)$ and in $J(\mathscr V)$. 
Choosing $r_0>0$ such that  $v(B_{r_0}(0)) \subset \mathscr U$, we have that $\hat J|v(B_{r_0}(0))=J|v(B_{r_0}(0))$.  In other words, $\hat L=\hat J \circ v:\R^n \to X$ is a homogeneous degree 1 harmonic extension of $L|B_{r_0}(0)=J \circ v|B_{r_0}(0)$.
  \end{proof}

\subsection{$(X,A,L)$-triples}
Many of our arguments use sequences of homogeneous degree 1 harmonic maps $L_k: \R^n \to (X_k, d_k)$ with different target spaces.  We introduce a notion which relates these maps to a fixed homogeneous degree 1 harmonic map. \begin{definition}\label{def:triple}
Let $X$ be a Euclidean building of type $W$ with the dimension of $X$ at least $2$, let $A$ be an apartment of $X$ and let $L:\R^n\to X$ be a homogeneous degree 1 harmonic map with $F:=L(\R^n) \subseteq A$.

Now consider another Euclidean building $X'$ of type $W$, an apartment $A'$ of $X'$, a homogeneous degree 1 harmonic map $L':\R^n \to X'$  with $F' = L'(\R^n)\subseteq A'$. If there exists an isometry $\phi:A\to A'$ compatible with $W$ (in the sense that if $\iota_1:\R^N \rightarrow A$ and $\iota_2:\R^N \rightarrow A'$ are charts in $\mathcal A, \mathcal A'$ respectively, then $\iota_2^{-1}\circ\phi\circ\iota_1:\R^N\to\R^N$ is an isometry with rotational part in $W$) and $L'=\phi\circ L$, then we say that $(X',A',L')$ is an {\bf $(X,A,L)$-triple}.
\end{definition}

\begin{lemma}\label{lem:triplesenergy}
If $(X', A', L')$ is an $(X, A, L)$-triple then for every $\Omega \subset \B_1(0)$,
\begin{equation}
E^{L'}[\Omega] =E^{L}[\Omega].
\end{equation}And for all $i = 1, \dots, n$,
\begin{equation}
\int_\Omega \left| \frac{\partial L'}{\partial x_i}\right|^2 \, d\mu_0 = \int_\Omega \left| \frac{\partial L}{\partial x_i}\right|^2 \, d\mu_0 ,
\end{equation}where $\mu_0$ is the $n$-dimensional Lebesque measure.

\end{lemma}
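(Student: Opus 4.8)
The plan is to reduce everything to one structural observation: the energy density and the directional energy densities of a finite energy map into a CAT(0) space are determined by the map's pullback pseudodistance on the domain, and hence are unaffected by post-composition with an isometric embedding. Granting this, the lemma is immediate once one checks that $L$ and $L'$ induce the same pullback pseudodistance on $\B_1(0)$.

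First I would recall from \cite[Section 1]{korevaar-schoen1} that for a finite energy map $v:\Omega\to (Z,d_Z)$ the approximate energy density $e_\epsilon(v)$ and the approximate directional energy density associated to a Lipschitz vector field $V$ are built purely out of the quantities $d_Z(v(x),v(y))$; the energy density $|\nabla v|^2$ and the directional energy densities $|v_*(V)|^2$ arise as the weak limits of these as $\epsilon\to 0$. Consequently, if $v:\Omega\to(Z,d_Z)$ and $v':\Omega\to(Z',d_{Z'})$ satisfy $d_{Z'}(v'(x),v'(y))=d_Z(v(x),v(y))$ for all $x,y\in\Omega$, then $|\nabla v|^2=|\nabla v'|^2$ and $|v_*(V)|^2=|v'_*(V)|^2$ a.e.; integrating over a measurable $\Omega$ then gives $E^{v'}[\Omega]=E^{v}[\Omega]$ and $\int_\Omega|v'_*(V)|^2\,d\mu_0=\int_\Omega|v_*(V)|^2\,d\mu_0$.

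Next I would verify the pseudodistance identity for $L$ and $L'$. An apartment is the image of an isometric embedding $\R^N\to X$, so the restriction of $d$ to $A$ is exactly the Euclidean metric $d_A$ transported to $A$; the same holds for $d'$ restricted to $A'$. Since $F=L(\R^n)\subseteq A$, for $x,y\in\B_1(0)$ we have $L(x),L(y)\in A$, hence $d(L(x),L(y))=d_A(L(x),L(y))$. By Definition~\ref{def:triple}, $L'=\phi\circ L$ for an isometry $\phi:A\to A'$, so $L'(x),L'(y)\in A'$ and
\[
d'(L'(x),L'(y))=d_{A'}(\phi(L(x)),\phi(L(y)))=d_A(L(x),L(y))=d(L(x),L(y)).
\]
Applying the previous paragraph with $v=L$, $v'=L'$, and $V$ ranging over the coordinate vector fields $\frac{\partial}{\partial x_i}$ (respectively, with the full gradient) yields both displayed identities of the lemma.

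There is no genuine obstacle here; the only points worth care are to invoke the correct fact — that the (directional) energy density is a function of the pullback pseudodistance alone — rather than re-deriving it, and to note that an apartment carries the Euclidean metric as the restriction of the building metric. I would also remark that the $W$-compatibility required of $\phi$ in Definition~\ref{def:triple} plays no role in this lemma: only that $\phi$ is an isometry is used.
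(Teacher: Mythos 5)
Your proposal is correct and follows essentially the same route as the paper's proof: the paper also observes that $d'(L'(x),L'(y)) = d'(\phi\circ L(x),\phi\circ L(y)) = d(L(x),L(y))$ and concludes immediately from the fact that energy and directional energy densities depend only on the pullback pseudodistance. You have simply spelled out the Korevaar--Schoen mechanism that the paper leaves implicit.
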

\begin{proof}
From the definitions, $d'(L'(x), L'(y)) = d'(\phi \circ L(x), \phi \circ L(y)) = d(L(x), L(y))$ and thus the result follows immediately.
\end{proof}

\begin{remark} For $\sigma \in \R^+$ and $\sigma^{-1}X$ denoting $(X, \sigma^{-1}d)$, $(\sigma^{-1}X,\sigma^{-1}A,\sigma^{-1}L(\sigma x))$ is an $(X,A,L)$-triple.
\end{remark}

\subsection{Notations and conventions}
We denote the $C^2$ distance between two metrics $g,h$ by
\[
\|g-h\|_{C^2(B_1(0))}:= \max_{i,j, k, l=1, \dots, n} \sup_{B_1(0)}  \left( |g_{ij}-h_{ij}| +\left |\frac{\partial g_{ij}}{\partial x^k}   -\frac{\partial h_{ij}}{\partial x^k}   \right| + \left |\frac{\partial^2 g_{ij}}{\partial x^k \partial x^l}   -\frac{\partial^2 h_{ij}}{\partial x^k \partial x^l}   \right| \right).
\]

It will often be convenient to work in the Euclidean coordinate system on a single normal coordinate chart of a Riemannian manifold $M$.  Recall that if $\sigma$ is sufficiently small, $(g_\sigma)_{ij}$ and $(\deltae)_{ij}$ are close in the sense that the $C^2$ norm
\[
\|g_\sigma-\deltae\|_{C^2(B_1(0))}\to 0 \text{ as } \sigma \to 0.
\]

We will let 
\[
\mu_g \text{ denote the volume measure with respect to }g
\]and\[
\mu_0^k \text{ denote the }k-\text{dimensional Lebesque measure}
\]and will suppress the $k$ when $k = n$.


\section{Projection into the Sub-building Defined by a Flat}
\label{sec:proj}
In this section, we investigate the closest point projection map $\pi:X \to \PF$ from a building to a subbuilding defined by a flat $F$. The goal is to  quantify the ``loss of energy"  when we compose a harmonic map with $\pi$ (cf~Proposition~\ref{anglek}).  In other words, we show that if $u:B_1(0) \to X$ is a harmonic map and $u(x) \notin \PF$, then  $|\nabla(\pi \circ u)|^2(x)$ is less than  $|\nabla u|^2(x)$  by a controlled amount.

Throughout this section, $(X,d)$ is a Euclidean building of type $W$ with dimension  at least $2$.   An $m$-flat is a copy of Euclidean space $\R^m$ isometrically and totally geodesically embedded in $X$.  We  fix an $m$-flat $F$. 

\subsection{Subbuilding defined by a flat}
A flat $F' \subset X$ is {\bf parallel to $F$} if the Hausdorff distance between $F$ and $F'$ is bounded.  Let $\PF$ be the union of all flats parallel to the flat $F$. 

\begin{lemma}\cite[Proposition 4.8.1]{kleiner-leeb} \label{Xnot}
$\PF$ is a convex subbuilding  and splits isometrically as 
\begin{equation} \label{ftimesy}
\PF \simeq \FY
\end{equation}
where  $Y$ is itself a Euclidean building.
\end{lemma}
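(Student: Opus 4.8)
This is \cite[Proposition 4.8.1]{kleiner-leeb}, so the plan is to recall its proof. The argument splits into a purely CAT(0) statement (convexity and the isometric splitting) and the verification that the resulting second factor carries a Euclidean building structure.

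\emph{Step 1: the CAT(0) splitting.} First I would establish, with no reference to the building structure, that $\PF$ is closed, convex, and splits isometrically as $F \times Y$ with $Y$ a Hadamard space. Fix $p \in F$ and an orthonormal frame $e_1,\dots,e_m$ of $F$, and let $\ell_i$ be the line through $p$ in direction $e_i$. By the Flat Strip/Sandwich Lemma (\cite{bridson-haefliger}), the union $P_{\ell_1}$ of all lines parallel to $\ell_1$ is closed, convex, and splits isometrically as $\R \times Z_1$ with $Z_1$ Hadamard. One checks that every flat $F'$ parallel to $F$ is contained in $P_{\ell_1}$ (through each of its points $F'$ contains an entire line in direction $e_1$, hence a line parallel to $\ell_1$), and that under the splitting $P_{\ell_1}=\R\times Z_1$ one has $F' = \ell_1' \times q(F')$, where $q$ is the projection to $Z_1$ and $\ell_1'$ is the $e_1$-line. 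Consequently $\PF = \R \times P_{F_1}$, where $F_1 := q(F)$ is an $(m-1)$-flat in $Z_1$ and $P_{F_1}$ is its parallel set there. Iterating $m$ times gives $\PF \simeq \R^m \times Y = \FY$ with $Y$ Hadamard, and closedness and convexity are preserved at each step.

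\emph{Step 2: $\PF$ is a subbuilding.} I would equip $\PF$ with the apartment system $\mathcal A_{\PF} := \{A \in \mathcal A : A \subseteq \PF\}$ and check that $(\PF, \mathcal A_{\PF})$ again satisfies axioms EB1--EB4 of \cite[Section 4.1.2]{kleiner-leeb}. A useful first observation is that $\mathcal A_{\PF}$ is exactly the set of apartments $A$ of $X$ that contain a flat parallel to $F$: if $F \subseteq A$ then, identifying $A \cong \R^N$, the translates $F+v$ are parallel to $F$ and fill $A$, so $A \subseteq \PF$; conversely any $A \in \mathcal A_{\PF}$ contains such a flat by Step 1. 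Granting the building axioms for $(\PF, \mathcal A_{\PF})$, the splitting $\PF \simeq \FY$ from Step 1 forces $Y$ to be a Euclidean building as well: its apartments are the $(N-m)$-flats $G$ with $F \times G \in \mathcal A_{\PF}$, and its model Coxeter complex has reflection group the subquotient of $W$ obtained by restricting the stabilizer of the subsphere $\partial_{Tits}F \subseteq \Sp^{N-1}$ to the orthogonal complement of $\partial_{Tits}F$.

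\emph{Main obstacle.} The heart of the matter is verifying EB3 and EB4 for $(\PF, \mathcal A_{\PF})$: that every geodesic segment, ray, or line of $\PF$ lies in some apartment contained in $\PF$, and that any two charts of such apartments differ only by a restriction of an isometry in an affine Weyl group. The cleanest route, which I would follow, passes to the Tits boundary: $\partial_{Tits}\PF = \partial_{Tits}F * \partial_{Tits}Y$, and one shows this is a subbuilding of the spherical building $\partial_{Tits}X$ of type $W$ via the join/link decomposition for spherical buildings around the subsphere $\partial_{Tits}F$. The rigidity of the $W$-action on $\Sp^{N-1}$ is precisely what forces the link to be a spherical building (hence $Y$ a Euclidean one) and simultaneously controls how two apartments through $F$ intersect, so that EB4 holds. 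This boundary argument is not formal, and it is exactly the content of \cite[Section 4.8]{kleiner-leeb}, which I would cite rather than reprove.
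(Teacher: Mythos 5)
The paper offers no proof of its own for this lemma: it is stated as a direct citation of \cite[Proposition 4.8.1]{kleiner-leeb}, exactly as you ultimately do. Your sketch is a faithful reconstruction of the Kleiner--Leeb argument --- iterated flat-strip splitting for the metric product $F\times Y$, followed by a Tits boundary / spherical link argument to verify the building axioms for the parallel set and its second factor --- and you correctly identify the verification of EB3--EB4 as the non-formal step and defer to the reference for it, so the two approaches coincide.
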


Since $P_F$ is convex, we can define the closest point projection map
$$\pi:X \rightarrow \PF.$$

\begin{remark}\label{rem:thick}
In what follows, we need to maintain the building structure of $X$ when we consider the subbuilding $\PF$. That is, rather than consider the canonical building structure (where $\PF$ is thick \cite[Proposition 4.9.2]{kleiner-leeb}), we continue to view $\PF$ as a building of type $W$ and preserve all walls of $X$ which are contained in $\PF$. 
\end{remark}

Let $x \in X$ and $x_0 := \pi(x) \in \PF$.  
We write  $x_0=(f,y_0)$ and 
$$
T_{x_0}\PF \simeq \R^m  \times T_{y_0}Y
$$ 
using decomposition (\ref{ftimesy}).  {This product structure is important in our analysis of the energy of the composition $\pi \circ u$ of the projection with a harmonic map.  In particular, the product structure distinguishes the walls of $T_{x_0}\PF$ into two categories.
}   

\begin{definition} \label{categories}
A wall $T_{x_0}H$ of  $T_{x_0}\PF$  
{\bf contains $T_{x_0}F$} if $T_{x_0}H \simeq\R^m \times T_{y_0}H_Y$ where $T_{y_0}H_Y$ is a wall of $T_{y_0}Y$.  Otherwise, we say that $T_{x_0}H$  {\bf does not contain $T_{x_0}F$}. 
(In view of Remark \ref{rem:thick}, not every wall of $T_{x_0}\PF$ must contain $T_{x_0}F$.)

\end{definition}

{The main idea in the proof of the  ``loss of energy" of $\pi \circ u$ is that, if $x_0:=\pi \circ u(p) \neq u(p)$, then the image of the blow up map of $\pi \circ u$ is contained in a wall that is transverse to $T_{x_0}F$ (a wall that does not contain $T_{x_0}F$) where $F$ is the image of the homogeneous approximation $L$ of $u$ at $p$.   Since the energy density of $u$ at $p$ agrees with the energy density of $L$, this implies the loss of energy at $p$.  The goal of this section is to make this idea precise.}

\subsection{Differentiable maps}
 \label{sec:differentials}
 In this section, we consider the differentials of $u$ and $\pi\circ u$ in the sense of Lytchak \cite{lytchak} and Korevaar-Schoen \cite{korevaar-schoen1}. 
   
Given a sequence $\ee$ of positive numbers converging to zero and $x \in X$, we follow the notation of \cite{lytchak} and let
\begin{equation}\label{eq:ee}
(X_x^\ee, d_x^\ee):=\wlim\left(X,\frac{1}{\epsilon_j}d, x\right).
\end{equation}
Thus, a  point $Q=[(x_j)] \in X_x^\ee$ is an equivalence class  of a sequence of points $(x_j)$ in $X$ with $\frac{1}{\epsilon_j}d(x_j ,x)<C$ for some $C>0$.  Two sequences $(x_j)$ and $(y_j)$ are equivalent if the pseudo-distance $d_x^\ee((x_j), (y_j)):=\omega\mbox{-}\lim \frac{1}{\epsilon_j} d(x_j, y_j)=0$.

Let $\mathcal O$ denote the vertex point of $T_{x_0}X$. Recall that $T_{x_0}X$ is isometrically embedded in $X_{x_0}^{(\epsilon_j)}$ as follows:  For $Q_0 =([\gamma_0],t_0) \in T_{x_0}X \setminus \{\mathcal O\}$, we view $Q_0$ as an element of $X_{x_0}^{\epsilon_j}$ by
 \[
([\gamma_0],t_0) \mapsto [(\gamma_0(\epsilon_j t_0)].
 \] Here $[\gamma_0]$ denotes the equivalence class of a space of directions emanating from $x_0$. That is, given two arclength parameterizations $\gamma_1, \gamma_2$ of geodesics emanating from $x_0$, 
$$\gamma_1 \sim \gamma_2 \Leftrightarrow \angle_{x_0}(\gamma_1,\gamma_2)=0.$$  
Given a sequence $\epsilon_j \to 0$, the  blow-up of a Lipschitz map $f:\Omega \to X$ at $p$  with $x_0=f(p)$ is
 $$f_p^{(\epsilon_j)}:T_p\Omega \to X_{x_0}^{(\epsilon_j)}$$  
defined by
$$
f_p^{(\epsilon_j)}(v) = [(f(\exp_p(\epsilon_j v))] \in X_{x_0}^{(\epsilon_j)}.
$$  
\begin{definition}We say $f:\Omega \to X$ is {\bf (Lytchak) differentiable at $p$} if, for $f(p)=x_0$, 
\[
f_p^{(\epsilon_j)}(v) :=[(f(\exp_p(\epsilon_j v))] \in T_{x_0}X  \text{ for all } v \in T_p \Omega,
 \]
and $f_p^{(\epsilon_j)}(v)$  is independent of the choice of sequence  $\epsilon_j \to 0$ (cf~\cite[Definition 7.1]{lytchak}).  In this case, we denote the differential of $f$ at $p$ by 
 $$D_pf:T_p\Omega \to T_{x_0}X.$$
 \end{definition}
 Thus if,
$ f_p^{(\epsilon_j)}(v)=Q_0=([\gamma_0],t_0)$ then by definition 
\begin{equation} \label{epj}
\lim_{\epsilon_j \to 0} \frac{d(f(\exp_p(\epsilon_j v)), \gamma_0(\epsilon_j t_0))}{\epsilon_j} =0,
 \end{equation} 
and if $f$ is differentiable at $p$ then the independence in $\epsilon_j \to 0$ implies further that
\begin{equation} \label{tau}
\lim_{\tau \to 0} \frac{d(f(\exp_p(\tau v)), \gamma_0(\tau t_0))}{\tau} = 0.
\end{equation}

Let $u: (\B_1(0),g) \rightarrow (X,d)$ be a harmonic map.  By \cite[Theorem 2.4.6]{korevaar-schoen1}, $u$ is locally Lipschitz continuous.  Thus, \cite[Theorem 1.6]{lytchak} implies that the set
\[
U' :=\{p \in \B_1(0): D_pu \mbox{ and } D_p (\pi \circ u) \mbox{ exist}\}
\]  
is of full measure.

Since $u\in W^{1,2}(B_1(0),X)$ and $\pi \circ u \in W^{1,2}(B_1(0),\PF)$, there are associated generalized pullback metrics   by \cite[Theorem 2.3.2]{korevaar-schoen1}.  More precisely (replacing $\pi$ by $\mathfrak p$ in \cite[Theorem 2.3.2]{korevaar-schoen1} since $\pi$ is already used here for the projection map),  
\[
\mathfrak p^v: \Gamma(TB_1(0)) \times \Gamma(TB_1(0)) \rightarrow L^1(B_1(0))
\]
defined by 
\[
\mathfrak p^v(Z,W) =\frac{1}{4} |v_*(Z+W)|^2-\frac{1}{4} |v_*(Z-W)|^2, 
\]
is  symmetric, bilinear, non-negative, and tensorial for $v=u$ and $v= \pi \circ u$.  For $p \in B_1(0)$,  let $\{\partial_i^p\}$ be the coordinate vector fields  with respect to normal coordinates centered at $p$.   Define
\begin{align*}
U:=&\{p \in U': \mathfrak p_p^u(\partial_i^p, \partial_i^p) = d^2_{x}(D_pu(\partial_i^p), \mathcal O_{x})\\
&\text{ and } \mathfrak p_p^{\pi \circ u}(\partial_i^p, \partial_i^p) = d^2_{x_0}(D_p(\pi \circ u)(\partial_i^p), \mathcal O_{x_0}), \, \forall i=1,\dots, n\}
\end{align*}
where $x = u(p)$, $x_0=\pi \circ u(p)$ and $d_x$, $d_{x_0}$ and $\mathcal O_x$, $\mathcal O_{x_0}$ denote  the distance functions and the origins of $T_xX$, $T_{x_0}\PF$, respectively.  By \cite[Lemma 1.9.5]{korevaar-schoen1}, $U$ is full measure in $U'$ and thus in $B_1(0)$.

The  inner product structure defined by $\mathfrak p^{\pi \circ u}$ implies that the map $D_p(\pi \circ u)$ has the same pullback distance function as a linear map $\ell:\R^n \rightarrow \R^n$.\footnote{The matrix $A=(\mathfrak p^{\pi \circ u}_{ij})$ is a symmetric matrix and  has a decomposition $A=Q^t D Q= (\sqrt{D}Q)^t \sqrt{D} Q$ where $Q$ is an orthogonal matrix, $D$ is a diagonal matrix with eigenvalues on the diagonal entries, $\sqrt{D}$ is the diagonal matrix with square root of the eigenvalues on the diagonal entries, and $t$ means transpose.  The matrix $\sqrt{D}Q$ defines a linear map $\ell:\R^n \rightarrow \R^n$ such the pullback metric $\ell^*\delta$ is $\mathfrak p^{\pi \circ u}$.} Thus, by the same argument as Proposition~\ref{prop:flats},  the image of $D_p(\pi \circ u)$ is a flat of $T_{x_0}\PF$.

  \subsection{Differential of the projection into $P_F$}
We choose a point $p \in U$, so in particular both $u$ and  $\pi \circ u$  are (Lytchak) differentiable at $p$. Denote the differentials of $u$ and $\pi \circ u$ at $p$ as
\[
D_pu: T_pB_1(0) \to T_xX \ \mbox{ and } \ D_p(\pi \circ u): T_pB_1(0) \simeq \mathbb R^n \to T_{x_0}P_F.
\]
Since $p \in U$, by the discussion in the previous subsection, the differentials $D_pu$ and $D_p(\pi\circ u)$ can be viewed as linear maps from  $\mathbb{R}^n$.  Consequently, 
\[
D_p(\pi \circ u)(\R^n):=F_{x_0}
\] 
 is a flat and contained in some apartment  of $T_{x_0}P_F$.
Similarly, $F_x:=D_p u(\mathbb R^n) \subset T_xX$ is a flat of $T_xX$.

Let  $$\Pi:T_{x_0}X \to T_{x_0}P_F$$ be the closest point projection map.  
We will prove (cf~Lemma~\ref{Q} below) that
\[
F_{x_0}
\subset \Pi(T_{x_0}X).
\]

\begin{lemma} \label{boundarylemma}
Denote by  $\partial T_{x_0}P_F$  the topological boundary of $T_{x_0}P_F$ as a subset of  $T_{x_0} X$.  Then
 \[
F_{x_0}
\subset \partial T_{x_0}P_F.
 \]
In particular, 
for any $Q_0 \in F_{x_0}$, 
\[
T_{Q_0}(T_{x_0}X) \setminus T_{Q_0}(T_{x_0}P_F) \neq \emptyset.
\]
\end{lemma}

\begin{proof}
For an arbitrarily chosen  $Q_0 \in F_{x_0}$, we will show that $Q_0 \in \partial T_{x_0}P_F$.     If $Q_0=O_{T_{x_0}X}$, the vertex of the cone $T_{x_0}X$, then let $Q_\epsilon=([\sigma_0],\epsilon)$ where $\sigma_0(s)$ is the arclength parameterization of a geodesic from $x_0$ to $x$.  Then $Q_\epsilon \notin T_{x_0}P_F$ and $Q_\epsilon \to Q_0$ which proves $Q_0 \in \partial T_{x_0}P_F$.  

We will henceforth assume $Q_0 \neq O_{T_{x_0}X}$.
Let $v \in \mathbb R^n \simeq T_pB_1(0)$ be such that
\begin{eqnarray*}
D_pu(v) & = &  Q \ = \ ([\gamma],t) \in T_xX  \\ 
D_p(\pi \circ u)(v) & = &  Q_0 \ = \ ([\gamma_0],t_0) \in T_{x_0}P_F
\end{eqnarray*}
where $\gamma$ (resp.~$\gamma_0$) is an arclength parameterization of a  geodesic ray  with $\gamma(0)=x$ (resp.~$\gamma_0(0)=x_0$). 

The assumption that  $u$ (resp.~$\pi \circ u$) is Lytchak differentiable at $p$  implies that 
$$
\lim_{\tau \to 0^+} \frac{d(u(\exp_p(\tau v)), \gamma(t\tau))}{\tau} =0
$$
and 
$$
\lim_{\tau \to 0^+} \frac{d(\pi \circ u(\exp_p(\tau v)), \gamma_0(t_0\tau))}{\tau} =0.
$$

Since $\pi$ is 1-Lipschitz, the above equalities together with the triangle inequality imply
\begin{equation} \label{wd'''}
\lim_{\tau \to 0^+} \frac{d(\pi \circ \gamma(t\tau), \gamma_0(t_0\tau))}{\tau} =0.
\end{equation}
Among other things, \eqref{wd'''} implies that $\lim_{\tau \to 0} \angle_{x_0}(\overline{x_0\, \pi \circ \gamma(t \tau}), \gamma_0)=0$.

  Let $\sigma_0(s)$ be the arclength parameterization of the geodesic from $x_0$ to $x$. 
\begin{claim}
\begin{equation} \label{ang}
\angle_{x_0}(\gamma_0, \sigma_0)=\pi/2.
\end{equation} 
\end{claim}
\begin{proof}
 Assume for the purposes of a contradiction that $\angle_{x_0}(\sigma_0,\gamma_0)>\frac\pi2$; and in particular there exists $\epsilon>0$ such that $\angle_{x_0}(\sigma_0,\gamma_0)=\frac\pi2+3\epsilon$. Let $p_\tau:= \gamma(\tau t)$.
By the continuity of angles \cite[Proposition 3.3]{bridson-haefliger}, for sufficiently small $\tau$ we have that $\angle_{x_0}(\overline{x_0p_\tau},\gamma_0^+)>\frac\pi2+2\epsilon$.  Moreover, by \eqref{wd'''} we have, for sufficiently small $\tau$, that $\angle_{x_0}(\gamma_0,\overline{x_0\pi(p_\tau)})<\epsilon$.  The triangle inequality for angles then implies that for such $\tau$,
\begin{equation}\label{eq:large-npc-angle}
\angle_{x_0}(\overline{x_0p_\tau},\overline{x_0\pi(p_\tau)})>\frac\pi2+\epsilon.
\end{equation}

Now, consider the geodesic triangle $\Delta(p_\tau,\pi(p_\tau),x_0)$ and the corresponding Euclidean comparison triangle $\Delta_E(A,B,C)$.  Because Euclidean comparison angles are greater than their CAT(0) counterparts, \eqref{eq:large-npc-angle} implies that
\begin{equation}\label{eq:large-comp-angle}
\angle_{C}(\overline{CA},\overline{CB})>\frac\pi2+\epsilon.
\end{equation}
In particular, this means that the side opposite $C$ (the side $\overline{AB}$ corresponding to $\overline{p_\tau \pi(p_\tau)}$ in the original) is the {\em longest} side of the Euclidean comparison triangle. In particular its length is greater than the length of $\overline{AC}$ (which is equal to the length $\overline{x_0p_\tau}$ in the original).  Hence, we have that
\[
d(p_\tau,\pi(p_\tau))>d(p_\tau,x_0)
\]
which contradicts the minimality of the projection.
\end{proof}

Let $\alpha$ be an apartment in $T_{x_0}X$ containing $Q_0=([\gamma_0], t_0)$ and $Q_1=([\sigma_0], 1)$ and $A$ be an apartment of $X$ such that $T_{x_0}A=\alpha$.   Then there exists  $\tau_1, s_0>0$ such that  $\sigma_0(s) \in A$ for $s \in[0,s_0]$ and $\gamma_0(\tau) \in A$ for $\tau \in [0,\tau_1]$. Identify $A$ with $\mathbb R^n$ with standard coordinates denoted by  $(x^1, \dots, x^n)$ such that  $x_0$ identified with the origin $(0,\dots, 0)$ of $\mathbb R^n$.  

By \eqref{ang}, we can assume that $\gamma_0$ (resp.~$\sigma_0$) parameterizes the initial segment of the positive $x^1$-axis (resp.~$x^2$-axis) in $\mathbb R^n$.
Let 
\[
c_\epsilon:[0,\infty) \to \mathbb R^n \simeq A, \ \ c_\epsilon(\tau) = (1/\sqrt{1+\epsilon^2}, \epsilon/\sqrt{1+\epsilon^2}, 0, \dots, 0).
\]
be an parameterization of a  ray in $A$. 
Define
$Q_\epsilon:=([c_\epsilon], t_0).$ Then
\[
Q_\epsilon \to Q_0 \mbox{ as $\epsilon \to 0$  in }T_{x_0}X.
\]
We are done by proving
\[
Q_\epsilon \notin T_{x_0}P_F, \mbox{ for $\epsilon>0$ sufficiently small.}
\]
To prove this statement, it is sufficient to show that $c_{\epsilon}( \tau) \notin P_F$ for $\tau\in [0,\tau_\epsilon]$.  On the contrary, assume that $c_{\epsilon}( \tau) \in P_F$ for some $\tau_\star >0$.  By the convexity of $P_F$ and the fact that $c_\epsilon(0)=x_0 \in P_F$, this also implies that $c_{\epsilon}( \tau) \in P_F$ for all $\tau \in [0, \tau_\star]$. Fix $\tau_1>0$ small. By (\ref{ang}), $\angle_{x_0}(c_\epsilon,\sigma_0)<\pi/2$.  Thus,   the closest point to $\sigma_0(\tau_1)$ on the ray $c_\epsilon([0,\infty))$ is not $x_0$. This contradicts $\pi(\sigma_0(\tau_1))=\pi(x)=x_0$, thereby proving that $c_\epsilon( \tau) \notin P_F$.   
\end{proof}

\begin{lemma} \label{PiC}
Let $Q_0 \in F_{x_0}$ and let $O_{Q_0}$ be the vertex of the cone $T:=T_{Q_0}(T_{x_0}X)$ and let  $C:=T_{Q_0}(T_{x_0}P_F) \subset T$.  Denote by $\Pi_C:T \to C$ the closest point projection map.  Then there exists $w_0 \in T \setminus C$ such that $\Pi_C(w_0)=O_{Q_0}$. 
\end{lemma}

\begin{proof}
By Lemma~\ref{boundarylemma}, there exists  $z_1 \in T \setminus C$.   Let $c_1 :=\Pi_C(z_1)$ and denote
\[
z_t=(1-t)O_{Q_0} + tz_1 \ \ \mbox{ and } \ \ c_t=(1-t)O_{Q_0}+tc_1.
\]

\vspace*{0.2in}

\noindent {\sc Claim 1}. $\Pi_C(z_t)=c_t$ for all $t \in (0,1)$. \\

{\sc Proof of claim.} Assume on the contrary that there exists $t \in (0,1)$ such that $\Pi_C(z_t)=p \neq c_t$.  Since $T$ is a cone, there is a  unique geodesic ray   from $O_{Q_0}$ going through $p$.  Let $\bar p \in T$ be the point on this geodesic ray with $t(O_{Q_0},\bar p)=d(O_{Q_0},p)$.  Since distances in the cone scale linearly along radial geodesics, we have 
\[
td(z_1,\bar p)=d(z_t,p).
\]
Since $p \in C$ and $C$ is a cone, $\bar p \in C$.  Thus, the fact that  $\Pi_C(z_t)=p \neq c_t$ and $c_1=\Pi_C(z_1)$ implies
\[
d(z_t,p)<d(z_t,c_t)=td(z_1,c_1)\leq td(z_1,\bar p)=d(z_t,p).
\] 
The above inequality is strict since $p\neq c_t$. This contradiction  proves the claim. \hfill $\Box$(claim)\\

Since $T_{x_0}X$ is a Euclidean building, its tangent cone at any point is again a Euclidean building, and any two points lie in a common apartment.  Thus, we can choose an apartment $A_1$  of $T$  containing $z_1$ and $c_1$.  
Let $t \mapsto w_t$ be the unique geodesic line in $A_1$ with  $w_1=z_1$ and parallel to the geodesic $t \mapsto c_t$.
  In other words, $\overline{c_tw_t}$ is a geodesic extension of $\overline{c_tz_t}$ for each $t \in (0,1)$.\\
  \\
{\sc Claim 2.} $\Pi_C(w_t)=c_t$ for $t \in (0,1]$.  \\

{\sc Proof of Claim.} On the contrary, assume $\Pi_C(w_t)=\gamma_t \neq c_t$ for $t \in (0,1]$.  Since $\Pi_C(z_t)=c_t$, there exists a unique point $w_\star \in \overline{w_t z_t} \cap \overline {w_t \gamma_t}$ such that $\angle_{w_\star}({\overline{w_\star \gamma_t}, \overline {w_\star z_t}})>0$. Note that $\Pi_C(w_\star)=\gamma_t$ so in particular, while $w_\star$ might equal $w_t$, $w_\star \neq z_t$. Then the geodesic quadrilateral defined by $c_t, \gamma_t, w_\star, z_t$ has sum of interior angles $> 2\pi$.  Indeed, 
$\angle_{c_t}(\overline{c_t\gamma_t},\overline{c_tz_t})$,  $\angle_{\gamma_t}(\overline{\gamma_tc_t},\overline{\gamma_tw_\star}) \ge \pi/2$ by the characterization of closest-point projections in CAT(0) spaces, $\angle_{z_t}(\overline{z_tc_t}, \overline{z_tw_\star})=\pi$ since $z_t$ is a point on the geodesic $\overline{c_tw_\star}$, and $\angle_{w_\star}(\overline{w_\star z_t}, \overline{w_\star\gamma_t})>0$.  This is a contradiction since  quadrilaterals in CAT(0) spaces has sum of interior angles $\leq 2\pi$.  
\hfill $\Box$(claim)
\\
\\
We are now in position to finish the proof of the lemma. Indeed, by the continuity of $\Pi_C$, we have
\[
O_{Q_0}=\lim_{t \to 0} c_t=\lim_{t \to 0} \Pi_C(w_t) = \Pi_C (\lim_{t \to 0} w_t) = \Pi_C(w_0).
\]
\end{proof}

\begin{lemma} \label{Q}
For $Q_0 \in F_{x_0}$, there exists $\hat{Q} \in T_{x_0}X$ such that $\Pi(\hat{Q})=Q_0$.  In other words, $F_{x_0} \subset \Pi(T_{x_0}X).$
\end{lemma}

\begin{proof}
For $w_0 \in T_{Q_0}(T_{x_0}X) \setminus T_{Q_0}(T_{x_0}P_F)$ of Lemma~\ref{PiC},  write $w_0=([\overline{Q_0\hat{Q}}], t_0)$.  The fact that  $\Pi_C(w_0)=O_{Q_0}$ implies  $\angle_{Q_0}(\overline{Q_0\hat{Q}}, \overline{Q_0Q'}) \geq \pi/2$ for any $Q' \in T_{x_0}P_F$. By the characterization of closest-point projections in CAT(0) spaces, this implies $\Pi(\hat{Q})=Q_0$.
\end{proof}

We now prove two technical lemmas about projection maps into conical Euclidean buildings.  Lemma \ref{betterwall} is crucial for establishing the energy loss, as it provides a description of the image of the blow-up of $\pi \circ u$ at $p$ (cf~Proposition \ref{lemma:star}). 
 
 Let  $B$ be a cone over a spherical building with vertex $\mathsf{O}$ and let $C$ be a subbuilding. Furthermore, suppose that $C$ is the set of all flats in $B$ parallel to a given flat $\mathfrak f$. That is, 
\[
C=P_{\mathfrak f} \simeq \mathfrak f \times C', \ \ \ \mathfrak f \mbox{ is a flat}.
\]
As in Remark \ref{rem:thick}, we preserve the walls of $B$ in $C$ which the canonical building structure would remove.  We have the following analog of Definition~\ref{categories}.

\begin{definition}
Let $\mathfrak h$ be a wall of $C$.  
We say {\bf  $\mathfrak h$  contains $\mathfrak f$} if it is of the form $\mathfrak f \times \mathfrak h'$ where $\mathfrak h'$ is a wall of $C'$. Otherwise, we say $\mathfrak h$ {\bf does not contain} $\mathfrak f$.
\end{definition}

\begin{lemma} \label{goodwall}
 If $C \neq B$, then there exists a wall $\mathfrak h$ of $C$ that does not contain $\mathfrak f$ and $\mathsf{O} \in \mathfrak h$.  
\end{lemma}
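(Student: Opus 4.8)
\textbf{The plan for Lemma~\ref{goodwall}.}
The claim is that if the conical subbuilding $C$ is properly contained in the ambient building $B$, then $C$ must contain at least one wall through its cone point $\mathsf O$ that does not split off the flat factor $\mathfrak f$. I would argue by contradiction: suppose \emph{every} wall of $C$ passing through $\mathsf O$ contains $\mathfrak f$, i.e.\ is of the form $\mathfrak f \times \mathfrak h'$ for $\mathfrak h'$ a wall of $C'$. Since $C$ is a cone, every wall of $C$ passes through $\mathsf O$ (walls of a conical building are themselves cones through the vertex), so the hypothesis is that \emph{all} walls of $C$ split off $\mathfrak f$. The idea is that this forces the building structure on $C$ (as a building of type $W$, with the walls of $B$ lying in $C$ retained, per Remark~\ref{rem:thick}) to be ``constant in the $\mathfrak f$-direction'', which will let me extend apartments of $C$ to larger flats inside $B$, contradicting that apartments of $B$ are $N$-flats and that $C$ is a \emph{proper} subbuilding.

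\textbf{Key steps.}
First I would recall that, for a Euclidean building of type $W$, the reflection group structure is recorded by the walls through a point in each apartment, and that the collection of walls in $C$ through $\mathsf O$ cuts each apartment $A \subset C$ into Weyl chambers whose rotational pattern is governed by $W$. Second, using the splitting $C \simeq \mathfrak f \times C'$ and the assumption that every wall of $C$ has the form $\mathfrak f \times \mathfrak h'$, I would observe that the wall arrangement in any apartment $A = \mathfrak f \times A'$ of $C$ is exactly $\mathfrak f \times (\text{wall arrangement of } A' \text{ in } C')$; in particular the hyperplanes in $A$ all contain the direction of $\mathfrak f$. Third — and this is the crux — I would take a point $p \in C$ and a wall $\mathfrak h_B$ of $B$ through a point near $p$ that is \emph{not} a wall of $C$ (such a wall exists precisely because $C \ne B$: choose an apartment $A_B$ of $B$ that is not contained in $C$, so $A_B$ carries walls of $B$ that are not walls lying in $C$; since $C$ is convex and closed, and $A_B \not\subseteq C$, the set $A_B \cap C$ is a proper convex subset of $A_B$ bounded by walls of $B$ — and at least one of those bounding walls, or the walls of $B$ meeting $A_B \cap C$, detects the failure). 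Then I would combine the ``constant in $\mathfrak f$'' structure of the walls of $C$ with the building axiom EB4 (any two charts of an apartment differ by an element of $W_{\mathrm{aff}}$) to slide $A_B \cap C$ in the $\mathfrak f$-direction: the convex region $A_B \cap C$, together with its reflections across the retained walls of $B$, would generate a flat strictly larger than the apartments of $C$ but still sitting inside $B$, or else force $C = B$. Either conclusion is the desired contradiction.

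\textbf{Alternative cleaner route.} Rather than the somewhat delicate ``sliding'' argument, it may be cleaner to argue at the level of spherical buildings at infinity or at the level of $\Sigma_{\mathsf O}C \subset \Sigma_{\mathsf O}B$. Coning off, the statement becomes: if a convex subbuilding $S = \Sigma_{\mathsf O}C$ of a spherical building $\Sigma_{\mathsf O}B$ of type $W$ splits as a spherical join $\partial\mathfrak f * S'$ and is proper, then $S$ contains a wall not containing the $\partial\mathfrak f$-factor. Here one can use the structure theory of spherical buildings of type $W$: the walls of $S$ correspond to the reflecting hyperplanes of $W$ restricted to apartments of $S$; if all of them contained $\partial \mathfrak f$ then the Coxeter complex realized by $S$ would be a join $\partial\mathfrak f * (\text{Coxeter complex of } S')$ with \emph{no extra reflections}, meaning $S$ is a full ``sub-Coxeter-building'' that is $W$-invariantly complemented — and then thickness/maximality of the atlas of $B$ (Remark~\ref{rem:thick} keeps the walls of $B$) would force an apartment of $B$ through $\mathsf O$ to lie in $C$ in every chamber direction adjacent to $S$, i.e.\ $S = \Sigma_{\mathsf O}B$ and $C = B$.

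\textbf{Expected main obstacle.} The hard part is making the ``the building $B$ has no walls transverse to $\mathfrak f$ near $C$'' step rigorous without local finiteness: I cannot enumerate walls, so I must phrase everything in terms of the convexity of $C$, the retained wall structure from $B$, and the combinatorics of $W$ acting on a single apartment. The cleanest formalization is probably: pick an apartment $A$ of $B$ with $A \cap C$ a chamber of $C$ (possible since $C$ is a subbuilding and $A\cap C$ convex), let $\xi$ be an interior direction of $A\cap C$ at $\mathsf O$, and note that if no wall of $C$ through $\mathsf O$ is transverse to $\mathfrak f$, then the panel of $A\cap C$ opposite the $\mathfrak f$-direction is crossed only by walls that contain $\mathfrak f$; walking across that panel into an adjacent chamber of $B$ and using that $B$ is a building of type $W$ shows the adjacent chamber also lies in $C$ (its bounding walls are again among the retained, $\mathfrak f$-containing walls), and iterating covers all of $A$ — hence $A \subset C$ for every apartment $A$ meeting $C$, so $C = B$. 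I would write the final proof along these lines, keeping the verification that ``crossing an $\mathfrak f$-containing wall stays in $C$'' as the one computation to do carefully.
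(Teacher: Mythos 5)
Your proposal takes a genuinely different route from the paper and, as written, has a real gap. The paper's proof is short and \emph{constructive}: pick an apartment $\mathfrak a_0$ of $C$ and an apartment $\mathfrak a_1$ of $B$ with $\mathfrak a_1 \not\subset C$, and invoke \cite[Corollary 4.4.6]{kleiner-leeb}, which says that the intersection of two apartments in a Euclidean building is a \emph{finite} intersection $\mathfrak a_0 \cap \mathfrak a_1 = \bigcap_{i=1}^I \mathfrak a_{0i}^+$ of half-apartments of $\mathfrak a_0$. If every $\mathfrak a_{0i}^+$ contained $\mathfrak f$, then $\mathfrak f \subset \mathfrak a_0 \cap \mathfrak a_1 \subset \mathfrak a_1$, forcing $\mathfrak a_1$ to be an apartment of $C$ (in the intended applications $C$ is exactly the union of apartments through $\mathsf O$ containing $\mathfrak f$), contradicting the choice of $\mathfrak a_1$. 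Hence some $\mathfrak a_{0i}^+$ does not contain $\mathfrak f$, and its bounding wall $\mathfrak h := \partial \mathfrak a_{0i}^+$ is a wall of $C$ (it lies in $\mathfrak a_0 \subset C$, cf.~Remark~\ref{rem:thick}), contains $\mathsf O$ since $C$ is a cone, and does not contain $\mathfrak f$. This is precisely the desired wall — no contradiction argument or enumeration of walls is needed, and in particular no local finiteness.

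Your proposal instead argues by contradiction and tries to reach $C = B$ through a ``sliding'' or ``chamber-walking'' argument. The gap is the step you yourself flag as ``the one computation to do carefully'': why does crossing an $\mathfrak f$-containing wall from a chamber of $C$ land you back in $C$? In a general (non-thin, non-locally-finite) building, a wall of $C$ can be a branching locus of $B$; crossing it inside a particular apartment of $B$ that is not contained in $C$ has no reason to stay in $C$. The parenthetical ``(its bounding walls are again among the retained, $\mathfrak f$-containing walls)'' is circular — it presumes the conclusion. There is also an internal confusion in the geometry: you describe ``the panel of $A\cap C$ opposite the $\mathfrak f$-direction'' as being ``crossed only by walls that contain $\mathfrak f$,'' but a panel opposite the $\mathfrak f$-direction would be bounded by a wall \emph{transverse} to $\mathfrak f$, not one containing $\mathfrak f$; and when $C$ and $B$ have the same dimension (as in the actual applications, where $C = T_{x_0}P_F \subset X_{x_0}^{(\epsilon_j)}$), $A \cap C$ for an apartment $A$ of $C$ is all of $A$, not a single chamber, so the setup needs more care. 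The crucial missing ingredient is the Kleiner–Leeb finiteness statement for apartment intersections; once you have it, the direct argument above short-circuits the chamber-walking entirely. Your ``alternative route'' via the spherical building $\Sigma_{\mathsf O}C \subset \Sigma_{\mathsf O}B$ is a reasonable reformulation of the same issue, but it inherits the same unaddressed step and does not supply a substitute tool.
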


\begin{proof}
Let  $\mathfrak a_0$ be an apartment of $C$ and $\mathfrak a_1$ be an apartment of $B$ such that $\mathfrak a_1 \not \subset C$.  
By \cite[Corollary 4.4.6]{kleiner-leeb}, 
$$\mathfrak a_0 \cap \mathfrak a_1=\bigcap_{i=1}^I \mathfrak a_{0i}^+
$$ 
where $\mathfrak a_{01}^+, \dots, \mathfrak a_{0I}^+$ are half-apartments of $\mathfrak a_0$.  If  $\mathfrak f \subset \mathfrak a_{0i}^+$ for all $i=1, \dots, I$, then $\mathfrak f \subset \mathfrak a_1$ which would imply that $\mathfrak a_1$ is an apartment of $C$, a contradiction.  Thus,  there exists a half-apartment $\mathfrak a_{0i}^+$  such that $\mathfrak f \not \subset \mathfrak a_{0i}^+$ which implies that the wall $\mathfrak h := \partial \mathfrak a_{0i}^+$ does not contain $\mathfrak f$.  Lastly, $\mathsf{O} \in \mathfrak h$ since $C$ is a cone.
\end{proof}

\begin{lemma}\label{betterwall}
Let $\pi_C:B \to C$ be the closest point projection map.
If $C\neq B$, $Q\in B\setminus C$, and $Q_0=\pi_C(Q)$, then there exists a wall $\mathfrak h$ of $C$ that does not contain $\mathfrak f$ such that $Q_0\in\mathfrak h$. 
\end{lemma}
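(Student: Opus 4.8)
The plan is to run the argument of Lemma~\ref{goodwall} along the geodesic $\overline{Q_0Q}$ rather than at the cone point $\mathsf O$, so that the wall produced passes through $Q_0$. Throughout I would use the standard properties of the nearest-point projection onto the closed convex set $C$ in the Hadamard space $B$: since $Q\notin C$, the geodesic $\gamma:=\overline{Q_0Q}$ meets $C$ only at $Q_0$ (any other common point would be strictly closer to $Q$ than $Q_0$ is), and $\angle_{Q_0}(Q,z)\ge\pi/2$ for every $z\in C$. Fix an apartment $\mathfrak a_1$ of $B$ containing $\gamma$; since $Q\in\mathfrak a_1\setminus C$ we have $\mathfrak a_1\not\subseteq C$.

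The key step is to analyze $K:=\mathfrak a_1\cap C$, a closed convex subset of $\mathfrak a_1\cong\R^N$ containing $Q_0$. Using the intersection theory for apartments and convex subbuildings of \cite[Sections~4.4 and 4.8]{kleiner-leeb} (this is where the building structure enters, just as in Lemma~\ref{goodwall}), one writes $K=\bigcap_{j=1}^{J}\mathfrak b_j^+$ as a finite intersection of half-apartments $\mathfrak b_j^+$ of $\mathfrak a_1$, with each bounding wall $\mathfrak h_j:=\partial\mathfrak b_j^+$ a wall of $\mathfrak a_1$ that is contained in $C$, hence a wall of $C$ in the sense of Remark~\ref{rem:thick}. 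Now the observation from the proof of Lemma~\ref{goodwall} applies verbatim: no $\mathfrak h_j$ can contain $\mathfrak f$, for if $\mathfrak h_j=\mathfrak f\times\mathfrak h_j'$ then $\mathfrak h_j$ contains a flat parallel to $\mathfrak f$; since $\mathfrak h_j\subseteq\mathfrak a_1$, the apartment $\mathfrak a_1$ would then contain a flat parallel to $\mathfrak f$, forcing $\mathfrak a_1$ to be an apartment of $C$ and contradicting $\mathfrak a_1\not\subseteq C$.

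It remains to place $Q_0$ on one of the $\mathfrak h_j$. Since $\gamma\subseteq\mathfrak a_1$ and $\gamma\cap C=\{Q_0\}$, we have $\gamma\cap\bigcap_j\mathfrak b_j^+=\{Q_0\}$, and each $\gamma\cap\mathfrak b_j^+$ is a subsegment of $\gamma$ having $Q_0$ as an endpoint. If every such subsegment were nondegenerate, say $[Q_0,\gamma(t_j)]$ with $t_j>0$, their intersection would contain the nondegenerate segment $[Q_0,\gamma(\min_j t_j)]$, a contradiction; hence $\gamma\cap\mathfrak b_{j_0}^+=\{Q_0\}$ for some $j_0$. Because $\mathfrak b_{j_0}^+$ is a half-apartment of $\mathfrak a_1$ containing $Q_0$, this forces $Q_0\in\partial\mathfrak b_{j_0}^+=\mathfrak h_{j_0}$. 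Then $\mathfrak h:=\mathfrak h_{j_0}$ is a wall of $C$ with $Q_0\in\mathfrak h$, and by the previous paragraph it does not contain $\mathfrak f$; this is the desired wall.

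The main obstacle is the structural input in the second paragraph: that the intersection of the apartment $\mathfrak a_1$ with the convex subbuilding $C$ is a finite intersection of half-apartments of $\mathfrak a_1$ whose bounding walls lie in $C$. For the intersection of two apartments this is \cite[Corollary~4.4.6]{kleiner-leeb}, which Lemma~\ref{goodwall} already relies on; the extension to an apartment against the parallel set $C=P_{\mathfrak f}$ should be read off from the structure theory of parallel sets in \cite[Section~4.8]{kleiner-leeb}, or reduced to the two-apartment case by working in an apartment $\mathfrak a_0\subseteq C$ through $Q_0$ chosen so that $\mathfrak a_0\cap\mathfrak a_1$ coincides with $K$ near $Q_0$ (using that the initial direction of $\gamma$ at $Q_0$ makes angle $\ge\pi/2$ with all of $\Sigma_{Q_0}C$). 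Once this is in place, the remainder is a routine transcription of the argument for Lemma~\ref{goodwall}.
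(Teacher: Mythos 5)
Your proof attempts a genuinely different route: you work directly in $B$ at the point $Q_0$, whereas the paper passes to the tangent cone $T_{Q_0}B$, where the subbuilding $T_{Q_0}C$ is conical with vertex $\mathsf O_{Q_0}$, applies Lemma~\ref{goodwall} verbatim to find a wall of $T_{Q_0}C$ through the vertex not containing $T_{Q_0}\mathfrak f$, and then transports this back to a wall of $C$ through $Q_0$ using the correspondence between walls of $T_{Q_0}C$ and walls of $C$ at $Q_0$ coming from \cite[Lemma~4.2.3]{kleiner-leeb}. The payoff of the paper's route is exactly that it sidesteps the one step your argument cannot supply.

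The gap in your proof is the structural claim in the second paragraph, and it is more serious than the way you have phrased it in the last paragraph. You assert that $K=\mathfrak a_1\cap C$ is a finite intersection $\bigcap_j\mathfrak b_j^+$ of half-apartments of $\mathfrak a_1$ ``with each bounding wall $\mathfrak h_j=\partial\mathfrak b_j^+$ a wall of $\mathfrak a_1$ that is contained in $C$, hence a wall of $C$.'' Even granting the half-apartment decomposition, the bounding hyperplane $\partial\mathfrak b_j^+$ is a full $(N{-}1)$-flat of $\mathfrak a_1$; since $\mathfrak a_1\not\subseteq C$, there is no reason for this entire flat to lie inside $C$ (only the face $\partial\mathfrak b_j^+\cap K$ need lie in $C$). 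This is precisely why the proof of Lemma~\ref{goodwall} produces walls as walls of an apartment $\mathfrak a_0\subset C$, not of $\mathfrak a_1$: writing $\mathfrak a_0\cap\mathfrak a_1=\bigcap_i\mathfrak a_{0i}^+$ with $\mathfrak a_{0i}^+$ half-apartments of $\mathfrak a_0$ makes $\partial\mathfrak a_{0i}^+\subset\mathfrak a_0\subset C$ automatic. Your suggested repair --- choose $\mathfrak a_0\subset C$ through $Q_0$ with $\mathfrak a_0\cap\mathfrak a_1=K$ near $Q_0$ and then invoke \cite[Corollary~4.4.6]{kleiner-leeb} --- would indeed rescue the ``walls of $C$'' part, but the existence of such an $\mathfrak a_0$ is a nontrivial extension axiom that the building axioms do not directly provide (they give apartments through pairs of points, not apartments of $C$ realizing a prescribed intersection with $\mathfrak a_1$), and the angle observation you cite does not yield it. Moreover, once you pass to a decomposition by half-apartments of $\mathfrak a_0$, the geodesic $\gamma\subset\mathfrak a_1$ no longer lies in $\mathfrak a_0$, so the clean ``$\gamma\cap\mathfrak b_j^+$ is a subsegment with endpoint $Q_0$'' step needs reworking as well. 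The paper's tangent-cone argument avoids all of this: in $T_{Q_0}B$ every apartment of $T_{Q_0}C$ passes through the vertex, so Lemma~\ref{goodwall} applies as stated, and the wall correspondence finishes the job.
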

\begin{proof}
Let $\widetilde{C}=T_{Q_0}C$, $\widetilde{B}=T_{Q_0}B$, and $\widetilde{\mathfrak f}=T_{Q_0}\mathfrak f$.  We observe that the geodesic germ $\overline{QQ_0}$ cannot lie in $T_{Q_0}C$ (because $Q_0$ is the closest point of $C$ to $Q$), and therefore $\widetilde{C}\neq\widetilde{B}$.  Applying Lemma~\ref{goodwall}, we conclude that there is a wall $\widetilde{\mathfrak h}$ of $\widetilde{C}$ which does not contain $\widetilde{\mathfrak f}$  and $\mathsf O_{Q_0} \in \widetilde{\mathfrak h}$ where $\mathsf O_{Q_0}$ is the vertex of the tangent cone $T_{Q_0}C$.  

It follows from \cite[Lemma 4.2.3]{kleiner-leeb} and the Euclidean building structure for tangent cones that $\widetilde{\mathfrak h}$ is a wall of $\widetilde C$ if and only if $\widetilde{\mathfrak h}=T_{Q_0}\mathfrak h$ for a wall $\mathfrak h$ of $C$ containing $Q_0$.  Hence, there is a wall $\mathfrak h$ of $C$ that does not contain $\mathfrak f$ with $Q_0 \in \mathfrak h$.
\end{proof}

Using the above lemma, we demonstrate that the full image of the differential map $D_p(\pi \circ u)$ is contained in a single wall which does not contain $T_{x_0}F$.
\begin{proposition} \label{lemma:star}
Let $p \in U$, $x=u(p)$ and $x_0=\pi \circ u(p)$. 
If $x \neq x_0$, there exists a   wall $T_{x_0}H$ that does not contain $T_{x_0}F$ 
such that $F_{x_0}$, the image of $D_p(\pi \circ u)$, is contained in $T_{x_0}H$.
\end{proposition}

\begin{proof}
Let $Q_0 \in D_p(\pi \circ u)(\R^n)$. By Lemma \ref{Q}, there exists a $\hat Q \in T_{x_0}X$ such that $\Pi(\hat Q) = Q_0$ (and $\hat Q \neq Q_0$). 

Set $B=T_{x_0}X$ and $C=T_{x_0}\PF$. By \cite[Corollary 3.6]{ahl},  $T_{x_0}\PF = P_{T_{x_0}F}$ inside $T_{x_0}X$,  so $C$ is the collection of all flats parallel to $\mathfrak f =T_{x_0}F \subset T_{x_0}X$. Applying Lemma \ref{betterwall} with this $B$ and $C$, there exists a wall $T_{x_0}H_{Q_0}$ containing $Q_0$ but not containing $T_{x_0}F$.
Now, since the image of $D_p(\pi \circ u)$ is a flat, $D_p(\pi \circ u)(\R^n)$ lies in a single apartment $T_{x_0}A \subset T_{x_0}P_F$. If there did not exist a single wall containing the full image of $D_p(\pi \circ u)$, then the linear sum of those subspaces would be all of $T_{x_0}A$. The proposition follows.\end{proof}

\subsection{Loss of energy} 
\label{sec:projectionsetup}
Finally, we demonstrate that at points $p$ where $u, \pi\circ u$ are differentiable and $u(p) \neq \pi \circ u(p)$, projecting into $F$ results in a quantitative energy loss (cf~Proposition~\ref{anglek}). Before we give a proof of the loss of energy we prove a technical result crucial for establishing the loss of energy statement. Specifically, we demonstrate an upper bound on a component function of the differential of $\pi_F$
(cf~Lemma~\ref{angle} below).

For $x_0=(f,y_0) \in \PF$, we denote the projection to the first component  by 
\[
\pi_F:\PF \simeq F \times Y \to F, \ \ \ \pi_F(x_0)=\pi_F(f,y_0)=f.
\]
Fixing an identification $F \simeq \R^m$, we can also view $\pi_F$ as a $\R^m$-valued map
\[
\pi_F:\PF \to \R^m.
\]
We denote the   $i^{th}$-component function by 
$$\pi_F^i:\PF \to \R.
$$

Let $A$ be an apartment of $\PF$.  Then $A\simeq F \times Y_A$ where $Y_A$ is an apartment of $Y$.   Let $\iota:\R^N \rightarrow A$ be a chart.  Let $\varphi:\R^N \rightarrow \R^N$ be an orthogonal transformation such that 
\begin{equation} \label{otchoice}
\begin{array}{l}
\iota \circ \varphi (\R^m \times \{(0, \dots, 0)\}) =F \simeq \R^m \mbox{ is the identity map of $\R^m$,}\\
\iota \circ \varphi(\{(0, \dots, 0) \times \R^{N-m}\})=Y_A.
\end{array}
\end{equation}
Then $\iota_A:=\iota \circ \varphi$ is a chart of the building $X$ of type $\varphi \cdot W$ (cf~Remark~\ref{rem:varphiW}).  
The chart $\iota_A$ induces the natural identifications
\begin{eqnarray} \label{tangentspace}
T_fF \simeq \R^m\simeq \R^m \times \{(0,\dots,0)\} \subset  \R^N \simeq   T_{x_0}A, & 
T_{x_0} \PF \simeq \R^m \times T_{y_0}Y.
\end{eqnarray}
With the above identifications, the restriction 
$
\pi_F|A \rightarrow F
$
 is simply the projection to the first $m$-components of $\R^N$; ie
 \begin{equation} \label{eq:proj}
 (x^1, \dots, x^m, x^{m+1}, \dots, x^N) \mapsto (x^1, \dots, x^m, 0, \dots, 0).
 \end{equation}
Furthermore, the (classical) differential $d(\pi_F|A): T_{x_0}A \simeq \R^N \rightarrow T_{\pi_F(x_0)}F \simeq \R^m$ of a projection map $\pi_F|A$  is of course also given by  (\ref{eq:proj}).
Similarly,  the restriction $\pi_F^i|A$ and its (classical)  differential $d(\pi_F^i|A)$ are both given by 
 \begin{equation} \label{eq:projith}
 (x^1, \dots, x^m, x^{m+1}, \dots, x^N) \mapsto  x^i.
 \end{equation}

The following lemma shows (\ref{eq:proj}) and (\ref{eq:projith}) also describe the restriction to $T_{x_0}A$ of the blow up maps 
\begin{eqnarray*}
(\pi_F)_{x_0}^{(\epsilon_j)}:{\PF}_{ x_0}^\ee \rightarrow (\R^m)_{\pi_F(x_0)}^\ee \simeq \R^m,
\\
(\pi_F^i)_{x_0}^{(\epsilon_j)}:{\PF}_{ x_0}^\ee \rightarrow \R_{\pi_F^i(x_0)}^\ee \simeq \R \ \ \ \ \ \ \ \,
\end{eqnarray*} 
 by proving that blowing up and taking the restriction commute; ie~$(\pi_F)_{x_0}^\ee |T_{x_0}A= d(\pi_F|A)$ and $(\pi_F^i)_{x_0}^\ee|T_{x_0}A = d(\pi_F^i|A)$.
\begin{lemma} \label{lem:rest}
With the identification $T_{x_0}A \simeq \R^N$ of (\ref{tangentspace}), the restriction $(\pi_F)_{x_0}^{(\epsilon_j)}\big|T_{x_0}A$ is given by (\ref{eq:proj}).  Similarly, the restriction of $(\pi_F^i)_{x_0}^{(\epsilon_j)}\big|T_{x_0}A$ is given by (\ref{eq:projith}).
\end{lemma}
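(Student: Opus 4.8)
The plan is to unravel the two ultralimit constructions involved and observe that both reduce to the same linear-algebra fact: on an apartment $A$, the projection $\pi_F|A$ is literally an orthogonal (in fact coordinate) projection in the chart $\iota_A$, and blowing up a map that is already linear on a flat returns that same linear map. First I would fix the point $x_0=(f,y_0)\in\PF$ and the apartment $A$ through $x_0$ together with the chart $\iota_A=\iota\circ\varphi$ chosen as in \eqref{otchoice}, so that, in the coordinates supplied by $\iota_A$, the identifications \eqref{tangentspace} hold and $\pi_F|A$ is the map \eqref{eq:proj}. Recall that by Remark~\ref{rem:varphiW} passing to the building structure of type $\varphi\cdot W$ changes neither $(X,d)$ as a metric space nor $\PF$, so this is a legitimate normalization and $T_{x_0}A$ sits inside ${\PF}_{x_0}^\ee$ via $\exp_{x_0}^\ee$ exactly as in \eqref{tangentconeinblowup}.

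Next I would compute $(\pi_F)_{x_0}^\ee$ on an element $\exp_{x_0}^\ee([\gamma],t)=[(\gamma(\epsilon_j t))]$ of $T_{x_0}A$, where $\gamma$ is a unit-speed geodesic in $A$ emanating from $x_0$. By definition of the blow-up map, $(\pi_F)_{x_0}^\ee([(\gamma(\epsilon_j t))])=[(\pi_F(\gamma(\epsilon_j t)))]$ as a point of $(\R^m)_{\pi_F(x_0)}^\ee\simeq\R^m$. Since $\gamma$ stays in $A$ for small parameter values and $\pi_F|A$ is the linear map \eqref{eq:proj}, we have $\pi_F(\gamma(\epsilon_j t))=\pi_F(x_0)+\epsilon_j t\,\pi_F^{\mathrm{lin}}(v)+o(\epsilon_j)$ where $v\in\R^N$ is the direction of $\gamma$ under the identification $T_{x_0}A\simeq\R^N$ and $\pi_F^{\mathrm{lin}}$ denotes the linear map \eqref{eq:proj}; because $\pi_F|A$ is exactly linear (not merely to first order) the error term is in fact identically zero. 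Dividing by $\epsilon_j$ and taking the $\omega$-limit identifies the image with $t\,\pi_F^{\mathrm{lin}}(v)$, i.e.\ with the value of $d(\pi_F|A)$ on $([\gamma],t)$ under the canonical identification $(\R^m)_{\pi_F(x_0)}^\ee\simeq T_{\pi_F(x_0)}\R^m\simeq\R^m$. This is precisely the assertion $(\pi_F)_{x_0}^\ee|T_{x_0}A = d(\pi_F|A)=$ \eqref{eq:proj}. The argument for $\pi_F^i$ is identical, composing with the $i$-th coordinate functional, giving \eqref{eq:projith}.

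The one point that needs genuine care — and which I expect to be the main obstacle — is checking that the blow-up map $(\pi_F)_{x_0}^\ee$ restricted to $T_{x_0}A$ really does land in (the copy of) $\R^m$ and coincides with the differential there, rather than producing some larger ultralimit object. Concretely, one must verify that the ultralimit $(\R^m)_{\pi_F(x_0)}^\ee$ is canonically $\R^m$ (true because $\R^m$ is a length space with a transitive dilation family, so its blow-up at any point is itself), and that the sequence $(\pi(x_j))$ defining the image stays within bounded rescaled distance of $\pi_F(x_0)$ so that the equivalence class is well defined — this follows from $1$-Lipschitzness of $\pi_F$ and the fact that the $x_j=\gamma(\epsilon_j t)$ are at rescaled distance $\le t$ from $x_0$. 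Once these compatibilities are in place the identity is forced, since on $A$ everything is honestly Euclidean and linear; no building-theoretic input beyond the chart choice \eqref{otchoice} is needed. I would therefore structure the proof as: (i) reduce to the chart $\iota_A$ via Remark~\ref{rem:varphiW} and record that $\pi_F|A$ is \eqref{eq:proj}; (ii) identify $(\R^m)_{\pi_F(x_0)}^\ee\simeq\R^m$ and $T_{x_0}A\hookrightarrow{\PF}_{x_0}^\ee$; (iii) evaluate $(\pi_F)_{x_0}^\ee$ on $\exp_{x_0}^\ee([\gamma],t)$ using linearity of $\pi_F|A$ to get \eqref{eq:proj}; (iv) post-compose with the $i$-th coordinate to get \eqref{eq:projith}.
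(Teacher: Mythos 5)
Your proof is correct and follows essentially the same route as the paper: represent a point of $T_{x_0}A$ via $\exp_{x_0}^\ee$ as the class of the sequence $(\gamma(\epsilon_j t))$ with $\gamma$ a geodesic in $A$, note the sequence lies in $A$ so that $\pi_F$ may be replaced by $\pi_F|A$ term by term, and then use that $\pi_F|A$ is exactly the coordinate projection in the chart $\iota_A$ to identify the blow-up with $d(\pi_F|A)$. The only cosmetic difference is that you compute the $\omega$-limit of the rescaled sequence by hand using linearity of $\pi_F|A$, whereas the paper shortcuts that final step by citing that Lytchak's differential coincides with the classical one on $\R^N$.
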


\begin{proof}
For $([\gamma],t)\in T_{x_0}A$,   the geodesic germ $[\gamma]$ is represented by a geodesic contained in $A$ (cf~\cite[Lemma 4.1.2]{kleiner-leeb}).  View $([\gamma],t)$ as a point in ${\PF}_{ x_0}^\ee$ under the identification defined by the exponential map, $([\gamma],t)=[(\gamma(t \epsilon_j))]$.  Since $(\gamma(t \epsilon_j))$ is a sequence of points in $A$, 
$$(\pi_F)_{x_0}^{(\epsilon_j)} ([\gamma],t) = [(\pi_F(\gamma(t \epsilon_j)))]=[(\pi_F|A(\gamma(t \epsilon_j)))]= (\pi_F|A)_{x_0}^{(\epsilon_j)}  ([\gamma],t).
$$ Since Lytchak's notion of differentials (cf~\cite{lytchak}) agrees with the classical differentials on $\R^N$, we conclude $(\pi_F)_{x_0}^{(\epsilon_j)}|T_{x_0}A= (\pi_F|A)_{x_0}^\ee= d_{x_0}(\pi_F|A)$. 
The second assertion follows from a similar argument.
\end{proof}

Let  $T_{x_0}H$ be a wall  of $T_{x_0}\PF$.  Choose an apartment $T_{x_0}A$ of $T_{x_0}\PF$ containing $T_{x_0}H$ and  use the identification (\ref{tangentspace}) to define
\[
\theta(H,i) :=\min_{Q \in T_{x_0}H \backslash \{{\mathcal O}_{x_0}\}} \angle_{{\mathcal O}_{x_0}}(Q, \vec e_i).
\]
where  $\mathcal O_{x_0}$ denotes the origin of the tangent cone $T_{x_0}\PF$ and $\vec e_1, \dots, \vec e_m$ are the standard basis of $\R^m \simeq F$.  We then have the following:
\begin{itemize}
\item For a wall $T_{x_0}H$ that contains $T_{x_0}F$, we have $\theta(H,i)=0$ for all $i \in \{1, \dots, m\}$.  
\item For a wall $T_{x_0}H$ that does not contain $T_{x_0}F$,
\[
0 < \Theta(H) \leq \frac{\pi}{2}  \ \mbox{ where } \Theta(H):= \max_{i=1, \dots, m}  \theta(H,i),
\]
\end{itemize}
\begin{remark}\label{rem:thetanot}
Although the quantity $\Theta(H)$ was defined by choosing an apartment $T_{x_0}A$ that contains $T_{x_0}H$ and the identification (\ref{tangentspace}) induced by the chart $\iota_A:\R^n \to A$, it  does not depend on these choices.  The choice of a chart  $\iota_A$ resulted from the choice of the orthogonal transformation $\varphi$ satisfying (\ref{otchoice}).  Any other choice of an orthogonal transformation which also satisfies \eqref{otchoice} does not change the angle used to define $\theta(H,i)$.  Furthermore,  let $T_{x_0}A'$ be another apartment that contains $T_{x_0}H$ and $\iota':\R^N \rightarrow A'$ be a chart in $\varphi \cdot \mathcal A$. This implies that $\iota'^{-1} \circ \iota_A:\iota_A^{-1}(A \cap A') \to \iota'^{-1}(A \cap A')$ is a restriction of $w' \in \varphi \cdot W_{\mathrm{aff}}$.  Thus, $\iota_{A'}:=\iota' \circ w'$ is a chart such that 
 \begin{equation}\label{eq:chartchoice}
\iota_{A'}^{-1} \circ \iota_A|\iota_A^{-1}(A \cap A') \to \iota_{A'}^{-1}(A \cap A')\text{ is the identity map.}
\end{equation}
In particular, for each apartment $A'$ such that $T_{x_0}H$ is contained in $T_{x_0}A'$, there exists a unique chart satisfying \eqref{eq:chartchoice} that induces the same identification as (\ref{tangentspace}) with $A$ replaced by $A'$.
  \end{remark}

\begin{lemma} \label{angle}
There exists $\theta_0 \in (0,\frac{\pi}{2}]$ with the following property:  For $x_0 \in \PF$ and   a wall  $T_{x_0}H$ that does not contain $T_{x_0}F$,   there exists $i_H\in \{1,\dots, m\}$ such that
$$
 |(\pi_{F}^{i_H})_{x_0}^\ee (Q) |^2 \leq \cos^2 \theta_0 \cdot d^2_{ x_0}(Q,\mathcal O_{x_0}), \ \ \forall Q \in T_{x_0}H.
$$
Here,  $\mathcal O_{x_0}$ denotes the origin of the tangent cone $T_{x_0}\PF$ and $d_{x_0}$ denotes the distance function on $T_{x_0}\PF$.
\end{lemma}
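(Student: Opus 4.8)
The plan is to reduce the whole statement to elementary Euclidean geometry inside a single apartment, and then to extract the uniform constant $\theta_0$ from the finiteness of $W$. Fix $x_0\in\PF$ and a wall $T_{x_0}H$ of $T_{x_0}\PF$ that does not contain $T_{x_0}F$. Choose an apartment $T_{x_0}A$ of $T_{x_0}\PF$ containing $T_{x_0}H$ together with a chart adapted to $F$ as in (\ref{otchoice}), so that under the identification (\ref{tangentspace}) we have $T_{x_0}A\simeq\R^N$, $T_{x_0}F\simeq\R^m\times\{0\}$ with standard orthonormal frame $\vec e_1,\dots,\vec e_m$, and $T_{x_0}H$ is a linear hyperplane through the vertex $\mathcal O_{x_0}$ (every wall of a tangent cone passes through the vertex, being the fixed set of a reflection in a group stabilizing $\mathcal O_{x_0}$). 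Pick $i_H\in\{1,\dots,m\}$ with $\theta(H,i_H)=\Theta(H)$. By Lemma~\ref{lem:rest} the restriction of $(\pi_F^{i_H})_{x_0}^\ee$ to $T_{x_0}A\supseteq T_{x_0}H$ is the coordinate function $Q=(x^1,\dots,x^N)\mapsto x^{i_H}$ of (\ref{eq:projith}), and since $d_{x_0}$ restricts to the Euclidean distance on the flat $T_{x_0}A$, for $Q\in T_{x_0}H\setminus\{\mathcal O_{x_0}\}$ we get
\[
(\pi_F^{i_H})_{x_0}^\ee(Q)=x^{i_H}=\langle Q,\vec e_{i_H}\rangle = d_{x_0}(Q,\mathcal O_{x_0})\,\cos\angle_{\mathcal O_{x_0}}(Q,\vec e_{i_H}).
\]

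Next I would bound $|\cos\angle_{\mathcal O_{x_0}}(Q,\vec e_{i_H})|$. Since $T_{x_0}H$ is a linear subspace, $-Q\in T_{x_0}H\setminus\{\mathcal O_{x_0}\}$ whenever $Q$ is, so by the definition of $\theta(H,i_H)$ both $\angle_{\mathcal O_{x_0}}(Q,\vec e_{i_H})\geq\theta(H,i_H)$ and $\pi-\angle_{\mathcal O_{x_0}}(Q,\vec e_{i_H})=\angle_{\mathcal O_{x_0}}(-Q,\vec e_{i_H})\geq\theta(H,i_H)$; hence $\angle_{\mathcal O_{x_0}}(Q,\vec e_{i_H})\in[\theta(H,i_H),\pi-\theta(H,i_H)]$ and therefore $|\cos\angle_{\mathcal O_{x_0}}(Q,\vec e_{i_H})|\leq\cos\theta(H,i_H)=\cos\Theta(H)$. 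Combining with the displayed identity gives $|(\pi_F^{i_H})_{x_0}^\ee(Q)|^2\leq\cos^2\Theta(H)\,d_{x_0}^2(Q,\mathcal O_{x_0})$ for all $Q\in T_{x_0}H$ (the case $Q=\mathcal O_{x_0}$ being trivial).

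It remains to produce a single $\theta_0\in(0,\frac{\pi}{2}]$ with $\Theta(H)\geq\theta_0$ for every $x_0\in\PF$ and every wall $T_{x_0}H$ not containing $T_{x_0}F$; then $\cos\Theta(H)\leq\cos\theta_0$ and the lemma follows. This uniformity is the crux, and it is exactly where the finiteness of $W$ is used. By Remark~\ref{rem:thetanot}, $\Theta(H)$ depends only on the relative position of the fixed frame $\vec e_1,\dots,\vec e_m$ and $T_{x_0}H$ inside a common apartment, and is independent of $x_0$, of the apartment, and of the adapted chart; moreover, by (\ref{eq:chartchoice}), adapted charts on different apartments can be chosen to agree on overlaps, so they are all compatible with a single realization of the Coxeter complex $(\mathbb E^N,W_{\mathrm{aff}})$ in which $F$ is a fixed coordinate subspace. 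In that realization every wall $T_{x_0}H$ occurring above is a fixed hyperplane of a reflection of $W_{\mathrm{aff}}$, hence its unit normal $v_H$ lies in the finite set of roots of the reflection group $W$. Since $\sin\theta(H,i)=|\langle v_H,\vec e_i\rangle|$ with $\vec e_1,\dots,\vec e_m$ fixed, the numbers $\Theta(H)=\max_{1\le i\le m}\theta(H,i)$ take only finitely many values, and each is strictly positive: the hypothesis that $T_{x_0}H$ does not contain $T_{x_0}F$ forces $v_H$ to have nonzero component along $\R^m\simeq F$ (otherwise $\R^m\times\{0\}\subset v_H^{\perp}=T_{x_0}H$). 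Hence $\theta_0:=\min_H\Theta(H)>0$, and $\theta_0\le\frac{\pi}{2}$ by the bound on $\Theta(H)$ recorded before Remark~\ref{rem:thetanot}. The main obstacle is precisely this step: one must verify that passing to tangent cones at arbitrary points of $\PF$ and to arbitrary apartments through them can never create a wall whose angle with $F$ degenerates to $0$, and it is the discreteness of the $W$-structure that prevents this.
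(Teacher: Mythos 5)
Your proof is correct and follows essentially the same strategy as the paper: reduce to Euclidean geometry inside a single apartment via Lemma~\ref{lem:rest}, then extract the uniform $\theta_0$ from the finiteness of $W$ and the well-definedness of $\Theta(H)$ recorded in Remark~\ref{rem:thetanot}. You fill in some detail the paper leaves implicit (most usefully, that $-Q\in T_{x_0}H$ whenever $Q$ is, which upgrades the one-sided bound $\cos\angle_{\mathcal O_{x_0}}(Q,\vec e_{i_H})\leq\cos\Theta(H)$ to the two-sided bound $|\cos\angle_{\mathcal O_{x_0}}(Q,\vec e_{i_H})|\leq\cos\Theta(H)$, and the observation that $\Theta(H)=0$ would force $\R^m\subset T_{x_0}H$).
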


\begin{proof} 
 Since  the walls through any point $x_0 \in \PF$ are determined by the finite reflection group $\varphi \cdot W$, there are only a finite number of possible values for $\theta(H,i)$.  Thus, there exists  $\theta_0 \in (0,\frac\pi2]$, which can be chosen independently of  $x_0 \in \PF$, such that
\[
\theta_0 \leq \Theta(H) \leq \frac{\pi}{2} \ \mbox{ for any wall $T_{x_0}H$ that does not contain $T_{x_0}F$.}
\]
We emphasize that $\theta_0$ is determined only by $F$ and $W$.
Choosing an apartment $T_{x_0}A$ containing $T_{x_0}H$, and using the identification (\ref{tangentspace}), for $i_H:=\mathrm{argmax}_{i=1, \dots, m} \theta(H,i)$, 
$$
 |\text{proj}_{\vec e_{i_H}}Q|=|Q \cdot \vec e_{i_H}| \leq  |\cos \Theta(H)| |Q|  \leq \cos \theta_0 \cdot |Q|, \ \ \ \forall Q \in T_{x_0}H \subset \R^N.
 $$
Thus, the result  follows from Lemma~\ref{lem:rest}.
\end{proof}

Finally, we can demonstrate the quantitative energy loss.

\begin{proposition} \label{anglek}
Let $u^i:= \pi_F^i \circ \pi \circ u:  B_1(0) \rightarrow \R$.  
There exists $\theta_0 \in (0,\frac{\pi}{2}]$ with the following property:  For every $p \in U$  such that $u(p) \notin \PF$,  there exists  $i \in \{1,\dots, m\}$ such that
\[
 \left|\frac{\partial u^i}{\partial x^i} \right|^2(p) \leq \cos^2 \theta_0 \cdot \left|\frac{\partial u}{\partial x^i} \right|^2(p).
 \]
\end{proposition}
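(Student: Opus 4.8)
The plan is to assemble Lemma~\ref{lemma:star} and Lemma~\ref{angle} via the chain rule for blow-up (differential) maps, which was already noted (ultralimits commute with compositions) in the proof of Lemma~\ref{lemma:star}. Fix $p\in U$ with $u(p)\notin\PF$, and set $x=u(p)$ and $x_0=\pi\circ u(p)\in\PF$; since $u(p)\notin\PF$ and $x_0\in\PF$ we have $x\neq x_0$. By Lemma~\ref{lemma:star}, the image of $D_p(\pi\circ u)$ is contained in a single wall $T_{x_0}H$ of $T_{x_0}\PF$ that does not contain $T_{x_0}F$. Apply Lemma~\ref{angle} to this wall: there is a uniform constant $\theta_0\in(0,\frac{\pi}{2}]$ (depending only on $F$ and $W$, hence not on $p$) and an index $i:=i_H\in\{1,\dots,m\}$ such that $\left|(\pi_F^{i})_{x_0}^\ee(Q)\right|^2\le \cos^2\theta_0\cdot d_{x_0}^2(Q,\mathcal O_{x_0})$ for every $Q\in T_{x_0}H$. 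This $i$ is the index claimed in the conclusion.

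Next I would run the following chain of (in)equalities evaluated on the coordinate vector $\partial_i^p$, where $Q:=D_p(\pi\circ u)(\partial_i^p)$, which lies in $T_{x_0}H$ by the previous paragraph:
\begin{align*}
\left|\frac{\partial u^i}{\partial x^i}\right|^2(p)
&=\left|(\pi_F^i)_{x_0}^\ee(Q)\right|^2
\le \cos^2\theta_0\cdot d_{x_0}^2(Q,\mathcal O_{x_0})\\
&\le \cos^2\theta_0\cdot d_x^2\big(D_pu(\partial_i^p),\mathcal O_x\big)
=\cos^2\theta_0\cdot\left|\frac{\partial u}{\partial x^i}\right|^2(p).
\end{align*}
The first equality uses the chain rule for differentials, $D_pu^i=(\pi_F^i)_{x_0}^\ee\circ D_p(\pi\circ u)$, together with the identification of the Korevaar--Schoen directional energy density of the real-valued map $u^i$ at $p$ with $\left|D_pu^i(\partial_i^p)\right|^2$. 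The second inequality is exactly Lemma~\ref{angle}. The third inequality uses that the blow-up $\pi_x^\ee\colon X_x^\ee\to{\PF}_{x_0}^\ee$ of the $1$-Lipschitz map $\pi$ is itself $1$-Lipschitz, that $\pi_x^\ee(\mathcal O_x)=\mathcal O_{x_0}$ (since $\pi(x)=x_0$), and that $\pi_x^\ee(D_pu(\partial_i^p))=D_p(\pi\circ u)(\partial_i^p)=Q$ (chain rule again). The final equality is the defining property of $U$, namely $d_x^2(D_pu(\partial_i^p),\mathcal O_x)=\mathfrak p^u_p(\partial_i^p,\partial_i^p)=\left|\frac{\partial u}{\partial x^i}\right|^2(p)$.

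Since this is a bookkeeping assembly of results already in hand, there is no deep obstacle; the points that require care are: (a) justifying that the blow-up chain rule holds at $p$ --- this uses that $p$ lies in the full-measure set $U'$ on which $D_pu$ and $D_p(\pi\circ u)$ exist and are independent of the scale $\ee$, together with the Lemma~\ref{lem:rest}-type observation that $(\pi_F^i)_{x_0}^\ee$ is the honest differential of the (apartment-wise linear, hence blow-up-invariant) map $\pi_F^i$, so that $D_pu^i$ exists and equals the composition; and (b) matching the pointwise value of the Korevaar--Schoen directional energy density $\left|\frac{\partial u^i}{\partial x^i}\right|^2$ at the specific point $p$ with $\left|D_pu^i(\partial_i^p)\right|^2$, which one handles by noting that $u^i$ is a composition of $1$-Lipschitz maps with a locally Lipschitz map and hence is itself locally Lipschitz, so the Korevaar--Schoen and classical Dirichlet energies agree for this real-valued function (if one wishes to be completely safe, intersect $U$ with the full-measure set on which this pointwise identity holds; this does not affect the later applications of the proposition). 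I expect (b) to be the only genuinely delicate bookkeeping step.
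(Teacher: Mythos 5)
Your proposal is correct and follows essentially the same route as the paper's proof: invoke Lemma~\ref{lemma:star} to land the image of $D_p(\pi\circ u)$ in a wall transverse to $T_{x_0}F$, apply Lemma~\ref{angle} to get the $\cos^2\theta_0$ factor for the right index $i=i_H$, and then use the chain rule for blow-up differentials together with the $1$-Lipschitz property of $\pi$ (hence of $\pi_x^{(\epsilon_j)}$) and the defining property of $U$ to close the inequality chain. Your elaborations (a) and (b) are exactly the bookkeeping the paper elides, and your middle step spelling out that $\pi_x^{(\epsilon_j)}$ sends $\mathcal O_x$ to $\mathcal O_{x_0}$ and carries $D_pu(\partial_i^p)$ to $D_p(\pi\circ u)(\partial_i^p)$ is the correct unpacking of the paper's terse ``distance non-increasing'' justification.
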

\begin{proof}
Choose normal coordinates centered at $p \in U$ and let $x=u(p)$, $x_0=\pi \circ u(p)$.
  By Proposition \ref{lemma:star}, we may choose a wall $T_{x_0}H$ that does not contain $T_{x_0}F$ such that  $D_p(\pi \circ u)(\R^n) \subset T_{x_0}H$. 
  Let $\theta_0$ and $i=i_H$ be from Lemma~\ref{angle}.  
By the chain rule for blown up maps, we have
\[
D_pu^i = (\pi_{F}^i)_{x_0}^\ee \circ D_p(\pi \circ u).
\]
Thus, Lemma~\ref{angle} implies
  \[
 \left|\frac{\partial u^i}{\partial x^i} \right|^2(p) = \left|(\pi_{F}^i)_{x_0}^\ee \circ D_p(\pi \circ u)\left(\frac{\partial}{\partial x^i}\right) \right|^2  \leq \cos^2 \theta_0 \cdot d_{x_0}^2\left(D_p(\pi \circ u)\left(\frac{\partial}{\partial x^i}\right), \mathcal O_{x_0}\right).
\]
Since  the projection map $\pi:X \rightarrow \PF$ is distance non-increasing, 
\[
d_{x_0}^2\left(D_p(\pi \circ u)\left(\frac{\partial}{\partial x^i}\right), \mathcal O_{x_0}\right)
\leq d_x^2\left(D_pu \left(\frac{\partial}{\partial x^i}\right), \mathcal O_x\right)= \left|\frac{\partial u}{\partial x^i} \right|^2(p).
\]
The desired inequality follows from combining the above two inequalities.
\end{proof}


\section{Closeness in Measure}
\label{sec:noregMeas}

In this section, we establish the cornerstone of the proof for Theorem~\ref{maintechnical}. The statement asserts that if a harmonic map into $X$ is sufficiently close to a homogeneous degree 1 harmonic map, then the image of the harmonic map mostly lies in a subbuilding defined by the homogeneous degree 1 map.  The precise statement we need is contained in Proposition~\ref{Sun Lemma 12 Collapsed}. In the proof of the regularity theorem of Gromov-Schoen (cf~\cite[Theorem 5.1]{gromov-schoen}), this assertion follows from the fact that, for the locally finite case, a  homogeneous degree 1 map $L$  is {\it effectively contained} in the subbuilding $P_F$ where $F=L(\R^n)$.

However, we cannot expect this property to be true in a general Euclidean building.   Instead, we take advantage of the observation in the previous section that a projection into $P_F$ results in a  loss of energy.  We use this to prove the ``closeness in measure" assertion of  Proposition~\ref{Sun Lemma 12 Collapsed}.

\begin{proposition}\label{Sun Lemma 12 Collapsed} 
Fix $E_0>0$,  $r_0 \in (0,1)$ and a  homogeneous degree 1 harmonic map 
\[
L:\R^n \rightarrow A \subset X
\]
where $X$ is of type $W$, the dimension of $X$ is at least $2$, and $A$ is an apartment of $X$.
For every $\epsilon>0$,  there exists $\eta=\eta(\epsilon, E_0,r_0,X,A,L)>0$ such that the following holds:\\
\\
Let $(X',A',L')$ be an $(X,A,L)$-triple and let $u:(B_1(0),g)\to (X',d)$ be a harmonic map with ${^gE}^u[B_1(0)]\leq E_0$. 
 If  $\sup_{B_{r_0}(0)} d(u,L') < \eta$ 
 and $\|g-\deltae\|_{C^2}<\eta$,
then
\[
\mu_0\{p \in B_{r_0}(0):u(p) \notin P_{F'}\} < \epsilon
\]
where $P_{F'}$ denotes the union of all flats of $X'$ parallel to $F':=L'(\R^n)$.
\end{proposition}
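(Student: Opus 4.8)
\noindent\emph{Proof strategy.}
The plan is to argue by contradiction, pitting the energy‑minimality of $u$ against the ``loss of energy under projection'' established in Section~\ref{sec:proj}. If the statement failed for some $\epsilon>0$ there would be $(X,A,L)$‑triples $(X_k,A_k,L_k)$, metrics $g_k$ with $\|g_k-\deltae\|_{C^2}\to0$, and harmonic maps $u_k:(B_1(0),g_k)\to X_k$ with ${}^{g_k}E^{u_k}[B_1(0)]\le E_0$ and $\eta_k:=\sup_{B_{r_0}(0)}d(u_k,L'_k)\to0$, yet $\mu_0(B_k)\ge\epsilon$, where $B_k:=\{p\in B_{r_0}(0):u_k(p)\notin P_{F'_k}\}$ and $F'_k:=L'_k(\R^n)$. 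Write $P_{F'_k}\simeq F'_k\times Y'_k$ as in Lemma~\ref{Xnot} and set $\Psi_k:=\pi_{F'_k}\circ\pi_k\colon X_k\to F'_k\simeq\R^m$, $\psi_k:=\Psi_k\circ u_k$, so that $\pi_k\circ u_k=(\psi_k,w_{Y,k})$. By Proposition~\ref{prop:flats}, $L=J\circ v$ with $v:\R^n\to\R^m$ linear of rank $m=\dim F$; compatibility of $\phi_k$ with $W$ gives $\Psi_k\circ L'_k=v$, and since $\Psi_k$ is $1$‑Lipschitz, $\sup_{B_{r_0}(0)}|\psi_k-v|\le\eta_k\to0$. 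By \cite[Theorem 2.4.6]{korevaar-schoen1} the $u_k$, hence the $\psi_k$, are uniformly Lipschitz on $\overline{B_{r_0}(0)}$, and $|\nabla L|^2\equiv\|v\|_{HS}^2=:c_L$.

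The first, and main, step is to show that energy‑minimality forces $D_p(\pi_k\circ u_k)\to(v,0)$ in $L^2$, where $(v,0)$ is the linear map $q\mapsto(v(q),0)\in\R^m\times T_{y_0}Y'_k$. Fix $\beta=\beta(\epsilon,r_0)$ with $\mu_0\big(B_{r_0}(0)\setminus B_{(1-\beta)r_0}(0)\big)<\epsilon/2$. For a width $\delta>0$ to be chosen, a pigeonhole argument over the $\sim\beta r_0/\delta$ disjoint annuli of width $\delta$ in $B_{r_0}(0)\setminus B_{(1-\beta)r_0}(0)$ produces $\rho_k\in\big((1-\beta)r_0,r_0\big]$ with $E^{u_k}[B_{\rho_k}(0)\setminus B_{\rho_k-\delta}(0)]\le 2\delta E_0/(\beta r_0)$; pass to a subsequence with $\rho_k\to\rho_\infty$. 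Comparing $u_k$ on $B_{\rho_k}(0)$ with the competitor equal to $L'_k$ on $B_{\rho_k-\delta}(0)$ and to the geodesic interpolation between $L'_k$ and $u_k$ on the annulus (using $d(L'_k,u_k)\le\eta_k$ and Lemma~\ref{lem:triplesenergy}) gives $\limsup_k E^{u_k}[B_{\rho_k}(0)]\le c_L\mu_0(B_{\rho_\infty}(0))+C\delta E_0/(\beta r_0)$. In the other direction, $\Psi_k$ being $1$‑Lipschitz gives $E^{u_k}[B_{\rho_k}(0)]\ge E^{\psi_k}[B_{\rho_k}(0)]=\sum_{l=1}^m E^{\psi_k^l}[B_{\rho_k}(0)]\ge\sum_l E^{h_k^l}[B_{\rho_k}(0)]$, where $h_k^l$ is the $g_k$‑harmonic function with the boundary data of $\psi_k^l$; those data are uniformly Lipschitz and converge uniformly, hence in $H^{1/2}$, to $v^l$, so $\sum_l E^{h_k^l}[B_{\rho_k}(0)]\to c_L\mu_0(B_{\rho_\infty}(0))$ and $\nabla h_k^l\to\nabla v^l$ in $L^2$. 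Sandwiching, and using $E^{\pi_k\circ u_k}=E^{\psi_k}+E^{w_{Y,k}}\le E^{u_k}$ on $B_{\rho_k}(0)$, forces $\limsup_k E^{w_{Y,k}}[B_{\rho_k}(0)]$ and $\limsup_k\sum_l E^{\psi_k^l-h_k^l}[B_{\rho_k}(0)]$ to be $\le C\delta E_0/(\beta r_0)$, whence
\[
\limsup_{k\to\infty}\int_{B_{\rho_k}(0)}\big\|D_p(\pi_k\circ u_k)-(v,0)\big\|^2\,d\mu_0\ \le\ \frac{CE_0\delta}{\beta r_0}.
\]

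The contradiction is then geometric. Let $\theta_0\in(0,\tfrac\pi2]$ be the constant of Lemma~\ref{angle}, let $s>0$ be the smallest singular value of $v$, and set $\tau:=\tfrac14 s(1-\cos\theta_0)>0$. Choose $\delta$ so small that $CE_0\delta/(\beta r_0)<\tfrac14\epsilon\tau^2$. Since $\mu_0(B_k\cap B_{\rho_k}(0))\ge\mu_0(B_k)-\epsilon/2\ge\epsilon/2$, Chebyshev's inequality gives, for $k$ large, a point $p\in B_k\cap B_{\rho_k}(0)\cap U$ (with $U$ the full‑measure set of Section~\ref{subsec:energyloss}) at which $\|D_p(\pi_k\circ u_k)-(v,0)\|\le\tau$. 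As $p\in B_k$ we have $x:=u_k(p)\neq x_0:=\pi_k(u_k(p))$, so Lemma~\ref{lemma:star} places the image of $D_p(\pi_k\circ u_k)$ in a wall $T_{x_0}H$ of $T_{x_0}P_{F'_k}$ not containing $T_{x_0}F'_k$, and Lemma~\ref{angle} supplies $i_H\in\{1,\dots,m\}$ with $|(\pi_{F'_k}^{i_H})_{x_0}^\ee(Q)|\le\cos\theta_0\, d_{x_0}(Q,\mathcal O_{x_0})$ for all $Q\in T_{x_0}H$. Choosing $q_0$ with $v(q_0)=\vec e_{i_H}$ and $|q_0|\le1/s$, the vector $Q_0:=D_p(\pi_k\circ u_k)(q_0)\in T_{x_0}H$ satisfies $|Q_0-(\vec e_{i_H},0)|\le\tau/s$, so, using Lemma~\ref{lem:rest} to identify $(\pi_{F'_k}^{i_H})_{x_0}^\ee$ on the relevant apartment with the $i_H$‑th coordinate, $|(\pi_{F'_k}^{i_H})_{x_0}^\ee(Q_0)|\ge1-\tau/s$ while $\cos\theta_0\,d_{x_0}(Q_0,\mathcal O_{x_0})\le\cos\theta_0(1+\tau/s)$; this is impossible once $\tau/s<\tfrac12(1-\cos\theta_0)$, i.e.\ for our choice of $\tau$. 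The resulting threshold is $\eta=\eta(\epsilon,E_0,r_0,X,A,L)$, the dependence on $X,A,L$ beyond $E_0,r_0$ entering through $\theta_0$, $s$ and $c_L$.

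The hard part is the first step: converting the \emph{$C^0$}‑closeness of the harmonic map $u_k$ to the homogeneous degree $1$ map $L'_k$ into quantitative $L^2$‑control on $D_p(\pi_k\circ u_k)$ — in particular, that the $Y'_k$‑component of the projection has asymptotically vanishing energy and its $F'_k$‑component is asymptotically the linear model $v$ — so that the projected map cannot, on a set of definite measure, land in a wall transverse to $F'_k$. The remaining ingredients (the interpolation and pigeonhole bookkeeping for the competitor, and the elliptic and trace estimates for the $h_k^l$ with moving domains $B_{\rho_k}(0)$ and varying metrics $g_k$) are routine but must be tracked carefully, since the constants entering the final $\eta$ depend on them.
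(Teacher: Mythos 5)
Your proof is correct and reaches the conclusion by a genuinely different route than the paper. Both arguments proceed by contradiction against a sequence of $(X,A,L)$-triples with $C^0$-closeness parameter $\eta_k\to 0$ but bad set of measure $\ge\epsilon$, and both arguments terminate with the same geometric wall estimate (your Lemma~\ref{lemma:star} and Lemma~\ref{angle}, packaged in the paper as Proposition~\ref{anglek}). Where they differ is in the intermediate step, namely how harmonicity is used to rule out the bad set.

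The paper's route goes through two directional convergence-in-measure lemmas: Lemma~\ref{upsilonprime} shows $|\partial u_k/\partial x^i|^2$ cannot exceed $(1+\tau)^2\lambda_i^2$ except on a set of vanishing measure (essential subharmonicity of the directional energy density, a mean value inequality, weak convergence of the energy density measures, and Egorov), and Lemma~\ref{omega*} shows the component function $u_k^i = \pi_{F_k}^i\circ\pi_k\circ u_k$ cannot have $|\partial u_k^i/\partial x^i|^2$ drop below $(1-\delta)^2\lambda_i^2$ except on a set of vanishing measure (a Fubini and pigeonhole argument along coordinate lines). Proposition~\ref{anglek} then forces each bad point into one of these small sets. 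Your route replaces this with a single $L^2$-estimate on the full differential $D_p(\pi_k\circ u_k)-(v,0)$, obtained by sandwiching $E^{u_k}[B_{\rho_k}(0)]$ between a geodesic-interpolation competitor (giving the upper bound $c_L\mu_0(B_{\rho_\infty})+C\delta E_0/(\beta r_0)$) and the harmonic-replacement lower bound through $E^{\psi_k}\ge\sum_l E^{h_k^l}\to c_L\mu_0(B_{\rho_\infty})$; Chebyshev then produces a point of $B_k$ at which $D_p(\pi_k\circ u_k)$ is pointwise close to $(v,0)$, contradicting Lemma~\ref{angle}. The trade-off is that you avoid the Egorov/Fubini bookkeeping entirely, but you invoke a global minimality comparison on $B_{\rho_k}(0)$; this requires knowing that a harmonic map in the sense of Definition~\ref{mindef} is the global energy minimizer for its trace, which is standard for CAT(0) targets (e.g.\ because the distance between two harmonic maps with the same trace is subharmonic, forcing them to coincide), but is worth stating explicitly since the paper's argument uses subharmonicity of the energy density and never needs this global comparison. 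The remaining details — the pigeonhole over annuli, the $H^{1/2}$-convergence of the boundary traces of $\psi_k^l$, and the uniform elliptic estimates for the $h_k^l$ with moving radius $\rho_k$ and metrics $g_k\to\deltae$ — are routine, as you say, but they are precisely where the quantitative dependence of $\eta$ on $E_0$, $r_0$, $\theta_0$, $s$ and $c_L$ is assembled, so if this were written out in full those constants should be tracked with some care.

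Two small points worth flagging: first, the inequality $E^{\psi_k^l}-E^{h_k^l}=\|\nabla(\psi_k^l-h_k^l)\|^2_{L^2(g_k)}$ that you implicitly use requires $h_k^l$ to be $g_k$-harmonic and $\psi_k^l-h_k^l$ to vanish on $\partial B_{\rho_k}(0)$, which is consistent with your setup; second, the bound $|Q_0-(\vec e_{i_H},0)|\le \tau/s$ requires interpreting your $\|\cdot\|$ as a Hilbert--Schmidt (or at least operator) norm so that the pointwise bound on $D_p(\pi_k\circ u_k)(q_0)$ follows, and this is consistent with the $L^2$ estimate you derived, which is naturally a bound on the integral of the Hilbert--Schmidt norm.
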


 We will prove Proposition~\ref{Sun Lemma 12 Collapsed} by contradiction.  Therefore, we  assume that there exists $\epsilon>0$, a sequence of $(X,A,L)$-triples $(X_k, A_k, L_k)$, and a sequence of harmonic maps $u_k:(B_1(0), g_k) \to (X_k, d_k)$ satisfying
\begin{itemize}
\item ${^{g_k}E}^{u_k}[B_1(0)]\leq E_0$, 
\item $\sup_{B_{r_0}(0)} d(u_k,L_k) < \frac{1}{k}$, 
\item $\|g_k-\deltae\|_{C^2}<\frac{1}{k}$
\end{itemize}
and such that 
\begin{equation} \label{geqep}
\mu_0\{q \in B_{r_0}(0):u_k(q) \notin \PFk \} > \epsilon
\end{equation}
where $\PFk$ is the union of all flats of $X_k$ parallel to  $F_k:=L_k(\R^n) \subset A_k$.
Before we finish the proof of Proposition~\ref{Sun Lemma 12 Collapsed} (cf~Section~\ref{subsec:SunCollapsed}), we prove some preliminary lemmas (cf~Lemma~\ref{upsilonprime} and Lemma~\ref{omega*}) regarding the convergence in measure of the   directional energies of the harmonic maps $u_k$.

\subsection{Convergence in measure} \label{subsec:inmeasure}
Using an appropriate chart to identify $A \simeq \R^N$, we view $L$ as a linear map $L:\R^n \to \R^N$.  Furthermore, by the singular value decomposition,   after an appropriate orthogonal change of coordinates $\psi:\R^n \rightarrow \R^n$ and $\varphi:\R^N \rightarrow \R^N$, we can express $\varphi \circ L\circ \psi: \R^n \rightarrow \R^N$ by an  $(N \times n)$-matrix 
\[
\begin{bmatrix}
\Lambda &{\bf O}  \\
{\bf O} & {\bf O}
\end{bmatrix}.
\]
Here,  $\Lambda$ is a  diagonal $(m\times m)$-square matrix with diagonal entries $\lambda_1 \geq \dots \geq  \lambda_m>0$ and the ${\bf O}'s$ represents zero matrices (of appropriate sizes).  In particular, 
\[
F=L(\R^n) \simeq \R^m \simeq \R^m \times \{(0,\dots,0)\} \subset \R^N.
\]
We will proceed using these new coordinates on $\R^n$ and $\R^N$ and changing the building structure of $(X,d)$ to the one of type $\varphi \cdot W$ (cf~Remark~\ref{rem:varphiW}).  By an abuse of notation, we denote $\varphi \circ L\circ \psi$ again by $L$.

Recall from Definition \ref{def:triple} that there exists an isometry $\phi_k:A \to A_k$, compatible with the building structure, such that $L_k = \phi_k \circ L$. 
In particular, 
$
F_k =L_k(\R^n) =\phi_k \circ L(\R^n).
$
We use $\phi_k$ to identify 
\begin{equation} \label{simeq}
F_k  \simeq  F \simeq \R^m.
\end{equation} 
Under this identification  via $\phi_k$,  
\begin{equation}\label{star}
L_k \equiv L.
\end{equation} 
In particular, the $i^{th}$-coordinate function $L^i_k$ of $L_k$ and the $i^{th}$-coordinate function $L^i$ of $L$ are the same function, and we have 
\begin{eqnarray*}
 \frac{\partial L^i_k}{\partial x^i}  \ = \  \frac{\partial L^i}{\partial x^i} & = &  \lambda_i \ \mbox{ for } i=1, \dots, m,  
\\ 
 \frac{\partial L^j_k}{\partial x^i}  \ = \  \frac{\partial L^j}{\partial x^i} & = &  0 \ \mbox{ for }i \neq j \mbox{ or } i=j=m+1, \dots, n. \nonumber
\end{eqnarray*}
In particular,  for $i=1, \dots, m$, 
\begin{equation}  \label{lambdai}
\left|  \frac{\partial L_k}{\partial x^i} \right|^2 = \left|  \frac{\partial L}{\partial x^i} \right|^2= \lambda_i^2.
\end{equation}

\begin{lemma} \label{upsilonprime}
Fix $\tau >0$ and $i=1, \dots, m$.  
\[
\mbox{If }
\Upsilon_k(\tau, i):=\left\{ p \in  B_{r_0}(0):   (1+ \tau)^2 \lambda_i^2  \leq  \left|\frac{\partial u_k}{\partial x^i} \right|^2(p)\right\},
\mbox{ 
then }
\lim_{k \to \infty}\mu_0(\Upsilon_k(\tau, i))=0.
\]
\end{lemma}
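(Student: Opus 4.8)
The strategy is to exploit the weak convergence of the directional energy measures of $u_k$ together with the $C^0$-closeness of $u_k$ to $L_k \equiv L$, and to derive a contradiction from the energy-loss estimate of Proposition~\ref{anglek} if $\mu_0(\Upsilon_k(\tau,i))$ fails to go to zero. First I would pass to the tangent/blow-up picture: the sequence $u_k$ has uniformly bounded energy and (after the $C^2$-convergence $g_k \to \deltae$) uniform interior Lipschitz bounds, so by \cite[Proposition 3.7, Theorem 3.11]{korevaar-schoen2} a subsequence converges locally uniformly in the pullback sense to a limit map $u_\infty : B_{r_0}(0) \to X_\infty$, with the directional energy measures $|\partial u_k/\partial x^i|^2 \, d\mu_{g_k}$ converging weakly to $|\partial u_\infty/\partial x^i|^2 \, d\mu_0$. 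Since $\sup_{B_{r_0}(0)} d(u_k, L_k) \to 0$ and $L_k \equiv L$ (via the identification $\phi_k$), the limit map $u_\infty$ agrees with $L$ on $B_{r_0}(0)$; in particular $u_\infty$ maps into the apartment $A$ (hence into $P_F$) and $|\partial u_\infty/\partial x^i|^2 = \lambda_i^2$ a.e.\ by \eqref{lambdai}.

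Next I would compare the energy of $u_k$ with that of its projection. Let $\pi_k : X_k \to P_{F_k}$ be the closest-point projection and set $v_k := \pi_k \circ u_k$. Since $P_{F_k}$ is convex and contains $\mathrm{image}(L_k)$, and since $u_k$ is a harmonic (energy-minimizing) map while $v_k$ agrees with $u_k$ in trace to first order near $\partial B_{r_0}(0)$ up to an error controlled by $\sup d(u_k,L_k) \to 0$, comparison of energies shows that the total ``energy loss'' $\int_{B_{r_0}(0)} (|\nabla u_k|^2 - |\nabla v_k|^2)\, d\mu_{g_k}$ tends to $0$. On the other hand, Proposition~\ref{anglek} gives a pointwise bound: at each $p \in U$ with $u_k(p) \notin P_{F_k}$, there is a coordinate index for which $|\partial v_k^i/\partial x^i|^2(p) \le \cos^2\theta_0 \, |\partial u_k/\partial x^i|^2(p)$, so a definite fraction of the directional energy at $p$ is lost. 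Combining with the observation that $|\partial v_k/\partial x^i|^2 \ge |\partial v_k^i/\partial x^i|^2$ and summing over $i$, the set where $u_k(p) \notin P_{F_k}$ carries a total energy loss bounded below by a constant times $\int_{\{u_k \notin P_{F_k}\}} |\nabla u_k|^2$; hence $\int_{\{u_k \notin P_{F_k}\}} |\nabla u_k|^2 \, d\mu_{g_k} \to 0$.

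Now I would conclude by a measure argument on the good set. On $\Upsilon_k(\tau,i) \cap \{u_k \in P_{F_k}\}$ we have $|\partial u_k/\partial x^i|^2 \ge (1+\tau)^2 \lambda_i^2$, while the weak convergence of $|\partial u_k/\partial x^i|^2$ to the constant $\lambda_i^2$ (a density already saturated by the limit, since $u_k$ is close to $L$ and the energy is essentially the energy of $L$) forces the measure of the set where $|\partial u_k/\partial x^i|^2$ exceeds $\lambda_i^2$ by a fixed amount to vanish; quantitatively, testing the weak convergence against a cutoff supported near $B_{r_0}(0)$ and using that $\int_{B_{r_0}(0)} |\partial u_k/\partial x^i|^2 \to \int_{B_{r_0}(0)} \lambda_i^2$ together with a lower-semicontinuity/no-concentration argument gives $\mu_0\big(\Upsilon_k(\tau,i) \cap \{u_k \in P_{F_k}\}\big) \to 0$. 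Combined with $\mu_0\{u_k \notin P_{F_k}\} \to 0$ (which follows from the energy-loss estimate of the previous paragraph together with a lower bound on $|\nabla u_k|^2$ away from a small set, or more directly is the content of the contradiction hypothesis being refuted — here we only need that this set has small measure, which follows once $\int_{\{u_k \notin P_{F_k}\}}|\nabla u_k|^2 \to 0$ and $|\nabla u_k|^2$ is bounded below on a large set since $u_k \to L$ with $L$ of rank $m \ge 1$), we obtain $\mu_0(\Upsilon_k(\tau,i)) \to 0$.

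The main obstacle I anticipate is making rigorous the transition from ``$C^0$-closeness of $u_k$ to $L_k$'' to ``the energy loss under projection is small.'' The subtlety is that $u_k$ need not itself lie in $P_{F_k}$, so one must use the energy-minimality of $u_k$ against the competitor obtained by interpolating between $u_k$ and $v_k = \pi_k \circ u_k$ near the boundary of a slightly smaller ball, and carefully estimate the interpolation error in terms of $\sup d(u_k, L_k)$ and the bound $E_0$ — this is exactly the type of quantitative energy-comparison estimate that Proposition~\ref{anglek} is designed to feed into, but it requires care with the domain metric $g_k$ and the behavior near the boundary. The weak-convergence/no-concentration step for the directional energies is comparatively routine given \cite{korevaar-schoen2}, and the pointwise application of Proposition~\ref{anglek} is immediate.
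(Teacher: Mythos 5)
Your proposal misses the key ingredient of the paper's argument for this lemma and, as a result, contains a genuine gap. The paper's proof relies on the fact that the directional energy density $\bigl|\frac{\partial u_k}{\partial x^i}\bigr|^2$ is (essentially) subharmonic, which yields a mean value inequality $\bigl|\frac{\partial u_k}{\partial x^i}\bigr|^2(p)\le c\,\bar u_k^r(p)$ with $c\to 1$ as $g_k\to\deltae$, where $\bar u_k^r(p)$ is the average of the directional energy over $B_r(p)$. The weak convergence of the measures $\bigl|\frac{\partial u_k}{\partial x^i}\bigr|^2\,d\mu_{g_k}\rightharpoonup\lambda_i^2\,d\mu_0$ then forces $\bar u_k^r(p)\to\lambda_i^2$ pointwise, and Egorov upgrades this to uniform convergence off a small set $V$. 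Combining the two shows $\Upsilon_k(\tau,i)\cap B_{r_0-r}(0)\subset V$ for $k$ large. Your ``no-concentration'' step cannot substitute for this: weak convergence of the directional energy measures to $\lambda_i^2\,d\mu_0$ does not, by itself, bound $\mu_0(\Upsilon_k)$, because nothing prevents the density from being much smaller than $\lambda_i^2$ on the complement of $\Upsilon_k$ while mass piles up on $\Upsilon_k$. The one-sided pointwise upper bound asserted in the lemma is precisely what subharmonicity gives you, and it is exactly what you have no mechanism for.

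There is also a structural problem. Lemma~\ref{upsilonprime} is a self-contained step in the by-contradiction proof of Proposition~\ref{Sun Lemma 12 Collapsed}; it is established \emph{before} anything is known about $\mu_0\{u_k\notin P_{F_k}\}$, and indeed the contradiction hypothesis being refuted is precisely that this measure stays $>\epsilon$. Your argument invokes the projection $\pi_k$, Proposition~\ref{anglek}, and an energy-comparison between $u_k$ and $\pi_k\circ u_k$ to first conclude that $\mu_0\{u_k\notin P_{F_k}\}\to 0$ and then use this to finish. Besides the unjustified energy-comparison step you yourself flag (which the paper never needs here), this route is circular within the structure of the proof: the projection/energy-loss machinery (Proposition~\ref{anglek}) is reserved for Lemma~\ref{omega*} and for the final combination step (Section~\ref{subsec:SunCollapsed}), where Lemmas~\ref{upsilonprime} and~\ref{omega*} together show that the set $\{u_k\notin\PFk\}$ lies in a small union of bad sets. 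Lemma~\ref{upsilonprime} itself only needs subharmonicity of the directional energy, the weak convergence of measures, and Egorov's theorem, none of which involve $\PFk$ or $\pi_k$.
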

\begin{proof}
Let $\epsilon_0>0$ be given.
We invoke \cite[last paragraph of the proof of Theorem 2.4.6]{korevaar-schoen1} which states that $\left|\frac{\partial u_k}{\partial x^i} \right|^2$ is essentially subharmonic (cf~Remark~\ref{re:thm2.4.6} below).  By a standard argument, we obtain a mean value inequality of the form
\begin{equation} \label{almostMVI}
\left|\frac{\partial u_k}{\partial x^i} \right|^2(p)  \leq \frac{c}{\mu_{g_k}(B_r(p))}\int_{B_r(p)}  \left|\frac{\partial u_k}{\partial x^i} \right|^2 d\mu_{g_k}
\end{equation}
where the constant $c=c(r,g_k)$ depends on the $C^2$-closeness of $g_k$ to the Euclidean metric in $B_r(p)$.  More precisely,  $c=1$ if the domain metric is Euclidean, and  we can choose $r>0$ sufficiently close to $0$ and $K_0 \in \N$ sufficiently large such that 
\[
c <(1+\tau), \ \ \forall k  \geq K_0.
\]
Assume further that $r>0$ is so small that $\mu_0(B_{r_0}(0)\backslash B_{r_0-r}(0))<\epsilon_0/2$. Then for all $p \in B_{r_0-r}(0)$, $B_r(p) \subset B_{r_0}(0)$. Define 
\[
\bar u^r_k:B_{r_0-r}(0) \rightarrow \R, \ \ \ \bar u^r_k(p) = (\mu_{g_k}(B_r(p)))^{-1}\int_{B_r(p)}  \left|\frac{\partial u_k}{\partial x^i} \right|^2 d\mu_{g_k}.
\]

It is straightforward to show that the sequence of measures $\left|\frac{\partial u_k}{\partial x^i} \right|^2 d\mu_{g_k}$ converges weakly to  $\lambda_i^2 \, d\mu_0$ in $B_{r_0}(0)$.\footnote{We first prove that $u_k$ converges uniformly in the pullback sense to $L$ in $B_{r_0}(0)$. To do so, following the notation of Section~\ref{sec:limitspace}, we  need to  check inductively in $j$ that the sequence of the pullback pseudodistance function of $u_{k,j}$ converges uniformly to the pullback pseudodistance function of   $L_j$ on $\Omega_j \times \Omega_j$, where $\Omega=B_{r_0}(0)$.  This follows easily because: (i) The pullback pseudodistance of  $L_k$  is equal to the pullback pseudodistance of $L$ since $L_k=\phi_k \circ L$ for an isometry $\phi_k:A\to A_k$. (ii) The difference between pullback pseudodistance of $u_k$ and  the pullback pseudodistance of  $L_k$ converges to 0 since $\sup_{B_{r_0}(0)}d_k(u_k, L_k) < 1/k$. 
Next, the convergence of measures follows from (\ref{lambdai}) and  \cite[Theorem 3.11]{korevaar-schoen2}.}
The Portmanteau Theorem implies (since $B_r(p)$ is a continuity set for the measure $\lambda_i^2d\mu_0$ normalized to a probability measure) that
\[
\lim_{k \rightarrow \infty} \int_{B_r(p)}   \left|\frac{\partial u_k}{\partial x^i} \right|^2 d\mu_{g_k}
=
\lambda_i^2 \mu_0(B_r(p)).
\]
Since $\mu_{g_k}(B_r(p)) \rightarrow \mu_0(B_r(p))$ as $k \rightarrow \infty$, we conclude that $\bar u^r_k \rightarrow \lambda_i^2$ pointwise in  $B_{r_0-r}(0)$. By Egorov's theorem, there exists a set $V$ with  $\mu_0(V)< \epsilon_0/2$ and $K \geq K_0$ such that $|\bar u^r_k(p) -\lambda_i^2|<\tau \lambda_i^2$ for all $k \geq K$ and $p \in B_{r_0-r}(0) \backslash V$.   Thus, for all
 $k\geq K$ and  $p \in B_{r_0-r}(0) \backslash V$,  
 \[
\left|\frac{\partial u_k}{\partial x^i} \right|^2(p)  \leq c \bar u^r_k(p) \leq c\left(1+\tau \right) \lambda_i^2 \leq (1+\tau)^2 \lambda_i^2.
\]
In other words,  $\Upsilon_k(\tau, i) \cap B_{r_0-r}(0) \subset V$ for all $k\geq K$.  Therefore,
\[
\mu_0(\Upsilon_k(\tau, i)) \leq \mu_0(V) +\mu_0(B_{r_0}(0)\backslash B_{r_0-r}(0))<\epsilon_0, \ \ \ \forall k \geq K.
\]
\end{proof}

\begin{remark}  \label{re:thm2.4.6}
If the domain metric  is Euclidean, the weak subharmonicity of $\left|\frac{\partial u_k}{\partial x^i} \right|^2$ follows from  \cite[Remark 2.4.3]{korevaar-schoen1}.  Indeed, if $u:B_1(0) \rightarrow X$ is a harmonic map with respect to the Euclidean metric on $B_1(0)$, then
 $$
 \int_{B_1(0)} \nabla d^2 (u,u_{sw}) \cdot \nabla \eta \ d\mu_0\geq 0
 $$ 
 for a constant vector $w$, $s \in \R$ and $\eta \in C^\infty_c(B_1(0))$ where $u_{sw}(x)=u(x+sw)$.  We let $w=\frac{\partial}{\partial x^i}$ and divide by $s^2$ and let $s \rightarrow 0$ to prove $\left|\frac{\partial u}{\partial x^i} \right|^2$ is weakly subharmonic.  If the metric  $g$ on $B_1(0)$ is not Euclidean, we  follow the proof of \cite[Theorem 2.4.6]{korevaar-schoen1}.  We now set $u_{sw}(x)=u(\bar x(x,s))$ where  $\bar x(x,s)$ is the one-parameter family of flows defined by the vector $w$ and the metric $g$.  We observe that the constant $C$ that appears in the (subharmonicity) inequality \cite[(2.4.xxv)]{korevaar-schoen1}
 \[
 \int_{B_1(0)} |\nabla u |^2 (\triangle \eta + C |\nabla \eta| + C\eta) \, d\mu_g \geq 0
 \]
is due to the difference of the measures  $(g^{ij} d\mu_g)_{sw}$ and $(g^{ij} d\mu_g)_{-sw}$ to $g^{ij} d\mu_g$, and hence $C \rightarrow 0$ as $\|g -\deltae\|_{C^2} \rightarrow 0$.
The  standard  technique of letting $\eta$ be radially symmetric test functions in the above inequality proves the mean value inequality (\ref{almostMVI}) where $c$ depends on $C$ and hence on the metric $g$, and   $c \rightarrow 1$ as $\|g -\deltae\|_{C^2} \rightarrow 0$.\end{remark}

For each $k$,
 denote the closest point projection map from $X_k$ to $\PFk$ by
  \[
 \pi_k: X_k \rightarrow \PFk.
\]
Following the setup of Section~\ref{sec:projectionsetup}, denote the projection onto the first component of $\PFk \simeq F_k \times Y_k$ by
\[
\pi_{F_k}:  \PFk\rightarrow \R^m \simeq F_k
\]
and let $\pi_{F_k}^i$ be the $i^{th}$-component function of $\pi_{F_k}$. Finally, define
\[
u_k^i : = \pi^i_{F_k} \circ  \pi_k \circ u_k.
\]

\begin{lemma} \label{omega*}
Fix  $\delta>0$ and $i\in \{1, \dots, m\}$. 
\[
\mbox{If }
\Theta_k(\delta, i): =\left\{ p \in B_{r_0}(0):   \left|\frac{\partial u_k^i}{\partial x^i}(p) \right|^2 \leq  (1-\delta)^2\lambda_i^2 \right\}, 
\mbox{
then } 
\lim_{k\to\infty}\mu_0(\Theta_k(\delta, i))=0. 
\]
\end{lemma}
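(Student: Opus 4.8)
The plan is to reduce Lemma~\ref{omega*} to the elementary Hilbert-space fact that weak convergence together with convergence of norms implies strong convergence, applied to the scalar derivatives $\frac{\partial u_k^i}{\partial x^i}$ on $B_{r_0}(0)$. Three ingredients go into this. (i) \emph{Uniform convergence survives the projections:} since $L_k=\phi_k\circ L$ has image $F_k\subseteq P_{F_k}$ we have $\pi_k\circ L_k=L_k$, and since $F_k$ sits inside $P_{F_k}\simeq F_k\times Y_k$ as $F_k\times\{\mathrm{pt}\}$ the projection $\pi_{F_k}$ restricts to the identity on $F_k$; hence $\pi^i_{F_k}\circ\pi_k\circ L_k=L^i_k=L^i$, the last equality by \eqref{star}. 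As $\pi_k$ and $\pi^i_{F_k}$ are $1$-Lipschitz, $\sup_{B_{r_0}(0)}|u_k^i-L^i|\le\sup_{B_{r_0}(0)}d_k(u_k,L_k)<\tfrac1k\to 0$. (ii) \emph{Bounded energy:} postcomposition with a $1$-Lipschitz map does not increase energy, so $\left|\frac{\partial u_k^i}{\partial x^i}\right|^2\le\left|\frac{\partial u_k}{\partial x^i}\right|^2$ a.e.\ and $\{u_k^i\}$ is bounded in $W^{1,2}(B_{r_0}(0),\R)$; together with (i) (which bounds the $L^2$-norms) and $\|g_k-\deltae\|_{C^2}\to 0$ this forces $u_k^i\rightharpoonup L^i$ weakly in $W^{1,2}(B_{r_0}(0))$ (first along a subsequence, then for the whole sequence, since the limit is determined), so $\frac{\partial u_k^i}{\partial x^i}\rightharpoonup\frac{\partial L^i}{\partial x^i}=\lambda_i$ weakly in $L^2(B_{r_0}(0))$. (iii) \emph{Convergence of the directional energy:} from the weak convergence $\left|\frac{\partial u_k}{\partial x^i}\right|^2 d\mu_{g_k}\rightharpoonup\lambda_i^2\,d\mu_0$ established in the proof of Lemma~\ref{upsilonprime}, together with $\mu_{g_k}\to\mu_0$, one obtains $\int_{B_{r_0}(0)}\left|\frac{\partial u_k}{\partial x^i}\right|^2 d\mu_0\to\lambda_i^2\,\mu_0(B_{r_0}(0))$, after the harmless shrinking of the radius used in Lemma~\ref{upsilonprime} to stay away from $\partial B_{r_0}(0)$.

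Putting these together, write $h_k:=\frac{\partial u_k^i}{\partial x^i}$. By (ii) and (iii), $\limsup_k\|h_k\|_{L^2(B_{r_0}(0))}^2\le\lambda_i^2\,\mu_0(B_{r_0}(0))=\|\lambda_i\|_{L^2(B_{r_0}(0))}^2$, while $h_k\rightharpoonup\lambda_i$ weakly. From the identity
\[
\|h_k-\lambda_i\|_{L^2(B_{r_0}(0))}^2=\|h_k\|_{L^2(B_{r_0}(0))}^2-2\lambda_i\int_{B_{r_0}(0)}h_k\,d\mu_0+\lambda_i^2\,\mu_0(B_{r_0}(0)),
\]
whose middle term tends to $-2\lambda_i^2\,\mu_0(B_{r_0}(0))$ by weak convergence, we conclude $h_k\to\lambda_i$ strongly in $L^2(B_{r_0}(0))$, hence $h_k\to\lambda_i$ in measure. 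For $\delta\in(0,1)$ we have $\Theta_k(\delta,i)=\{\,|h_k|\le(1-\delta)\lambda_i\,\}\subseteq\{\,|h_k-\lambda_i|\ge\delta\lambda_i\,\}$ (using $\lambda_i>0$), and the latter set has measure tending to $0$. This is the assertion of the lemma.

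The main point to get right — rather than a genuine obstacle — is the bookkeeping: that the directional energy measures of the $u_k$ converge to $\lambda_i^2\,d\mu_0$ (supplied by Lemma~\ref{upsilonprime} and its footnote), that the geometric identity $\pi^i_{F_k}\circ\pi_k\circ L_k=L^i$ holds, and that the varying non-Euclidean domain metrics $g_k$ enter only through factors tending to $1$, which is exactly what $\|g_k-\deltae\|_{C^2}\to 0$ provides. The passage from weak convergence of measures to convergence of the total integrals over $B_{r_0}(0)$ is handled, as in Lemma~\ref{upsilonprime}, by first working on a slightly smaller concentric ball and then letting its radius increase to $r_0$.
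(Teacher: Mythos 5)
Your argument is correct, and it takes a genuinely different route from the one in the paper.

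The paper proves the lemma by contradiction through a one-dimensional slicing argument: it uses Fubini to disintegrate $B_{r_0}(0)$ into $x^i$-fibers, chooses a good fiber on which both $\Theta_k$ (where the derivative is too small) is substantial and $\Upsilon_k$ (where it is too large, controlled by Lemma~\ref{upsilonprime}) is small, and then applies the fundamental theorem of calculus to $t\mapsto u_k^i(t,\bar p_k)$ to show that the endpoint displacement falls strictly short of $2\rho(\bar p_k)\lambda_i$, contradicting the uniform convergence $u_k^i\to L^i$. You instead package everything into the Hilbert-space fact ``weak convergence plus convergence of $L^2$-norms implies strong convergence'': the uniform convergence $u_k^i\to L^i$ and the uniform $W^{1,2}$-bound give $\partial_i u_k^i\rightharpoonup\lambda_i$ weakly in $L^2$; the $1$-Lipschitz bound $|\partial_i u_k^i|^2\le|\partial_i u_k|^2$ combined with the convergence of the directional energy measure gives $\|\partial_i u_k^i\|_{L^2}^2\to\lambda_i^2\mu_0(B_{r_0}(0))$; strong $L^2$ convergence then follows, which trivially gives convergence in measure. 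The ingredients you use (the identity $\pi_{F_k}^i\circ\pi_k\circ L_k=L^i$, the uniform $C^0$ convergence, the energy-measure convergence that also powers Lemma~\ref{upsilonprime}, and the closeness of $g_k$ to $\deltae$) are essentially the same ones the paper uses, but your packaging is cleaner and in fact yields a stronger conclusion (strong $L^2$ convergence, not merely convergence in measure); the paper's slicing argument, by contrast, is more elementary in the sense that it avoids even the basic Hilbert-space compactness lemma. One small point worth making explicit if you were to write this up: your inclusion $\Theta_k(\delta,i)\subseteq\{|h_k-\lambda_i|\ge\delta\lambda_i\}$ requires $\delta\in(0,1]$, which is the only regime in which the lemma is actually used (and implicitly assumed in the paper as well), but you should state this restriction up front rather than leaving it parenthetical.
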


\begin{proof} 
We consider the case $i=1$ as all other cases follow similarly.    On the contrary, assume there exists a subsequence of $k \rightarrow \infty$ (which we will still denote as $k$ by an abuse of notation) such that $\lim_{k \to \infty}\mu_0(\Theta_k(\delta, 1)) \geq \beta >0$.

Let ${\bf B} \subset \R^{n-1}\simeq \{0\} \times \R^{n-1}$ be the ball of radius $r_0$ centered at the origin. For $p \in B_{r_0}(0)$, write $p = (p_1, \bar p)$ where $\bar p:= (p_2, \dots, p_n) \in {\bf B}$. 
Use the projection of $B_{r_0}(0) \rightarrow {\bf B}$, $(p_1,\bar p) \mapsto \bar p$ to view $B_{r_0}(0)$ as a fiber bundle over ${\bf B}$ with intervals as fibers.  
More precisely,  to each $\bar p \in {\bf B}$, we associate an interval  $I_{\bar p}:=(-\rho(\bar p), \rho(\bar p)) \subset \R$  where $\rho(\bar p)=\sqrt{(3/4)^2 - |\bar p|^2}$.

For $\bar p \in {\bf B}$, define a subset $\theta_k(\bar p)$ of the interval $I_{\bar p}$ by 
\[
\theta_k(\bar p):= \left\{p_1 \in I_{\bar p}:  \left| \frac{\partial u^1_k}{\partial x^1} (p_1,\bar p) \right|\leq (1-\delta)\lambda_1\right\}.
\]
 Define a subset   $\mathcal A_k$ of the base space ${\bf B}$ by 
$$\mathcal A_k:= \{\bar p \in {\bf B}: \mu_0^1(\theta_k(\bar p))> \beta/4\omega\}
$$ (recall  $\mu_0^k$ denotes the $k$-dimensional Lebesgue measure) where $\omega:=\mu_0^{n-1}({\bf B})$. 
Now suppose that $\liminf_{k \to \infty} \mathcal A_k =0$. Then there exists a subsequence (again labeled by $k$) such that 
 $\lim_{k \to \infty}\mu_0^{n-1}(\mathcal A_k^c) =\omega$. (We use superscript $c$ to denote the complement of a set.)
 
 Then Fubini's theorem implies
\begin{align*}
\frac{\beta}{4} &= \lim_{k \to \infty}\mu_0^{n-1}(\mathcal A_k^c) \cdot \frac{\beta}{4\omega} \geq \lim_{k \to \infty} \int_{\mathcal A_k^c} \mu_0^1(\theta_k(\bar p)) d\mu_0^{n-1}\\& =\lim_{k \to \infty} \int_{{\bf B}}\mu_0^1(\theta_k(\bar p)) d\mu_0^{n-1}= \lim_{k \to \infty}\mu_0(\Theta_k(\delta, 1)) \geq \beta,
\end{align*}
a contradiction.  Thus, $\liminf_{k \to \infty}\mu_0^{n-1}(\mathcal A_k) >0$. 

For $\kappa, \tau>0$ to be chosen later, define a subset $\mathcal B_k$ of the base ${\bf B}$ by
\begin{align*}
\mathcal B_k:= \left\{ \bar p \in {\bf B}:\right.& \left.\left| \frac{\partial u^1_k}{\partial x^1} (p_1, \bar p) \right|\leq (1+\tau) \lambda_1 \text{ for } p_1 \in I_{\bar p}\right.\left.  \text{ except on a subset of measure}< \kappa \right\}.
\end{align*} 
Lemma \ref{upsilonprime} implies that $\lim_{k \to \infty}\mu_0^{n-1}(\mathcal B_k) = \omega$. Thus,  there exists $K$  with the property that $\mathcal A_k \cap \mathcal B_k \neq \emptyset$ for all $k> K$.  Choose $\bar p_k \in \mathcal A_k \cap \mathcal B_k$ for each $k$.

Define a subset $b_k$ of the interval $I_{\bar p_{k}}$ by
\[
 b_k:=\left \{t \in I_{\bar p_k}: \left| \frac{\partial u^1_k}{\partial x^1} (t, \bar p_k) \right| \leq (1+\tau) \lambda_1 \right\}.
\]
For ease of notation, let $$
f_k(t):= u_k^1(t, \bar p_k).
$$
 The inequality $|f_k'(t)|^2 \leq \left|\frac{\partial u_k}{\partial  t}\right|^2(t, \bar p_k)$ and the uniform Lipschitz bounds on $u_k$ in $B_{r_0}(0)$ imply there exists $C>0$ such that 
 $$
 |f_k'(t)| \leq C, \ \ \forall t \in I_{\bar p_k}=(-\rho(\bar p_k), \rho(\bar p_k)).$$

Since $\bar p_k \in \mathcal A_k \cap  \mathcal B_k$,
\[
\mu_0^1(\theta_k(\bar p_k)) > \beta/4\omega \ \ \mbox{ and } \ \ 
\mu_0^1(b_k^c)<\kappa.
\]
Therefore, for $k >K$,
\begin{align*}
f_k(\rho(\bar p_k))- f_k(-\rho(\bar p_k))&=
 \int_{-\rho(\bar p_k)} ^{\rho(\bar p_k)}f_k'(t)\, dt
  \nonumber \\
& \leq \int_{-\rho(\bar p_k)} ^{\rho(\bar p_k)}|f_k'(t)|\, dt \nonumber \\
&=\int_{\theta_k(\bar p_k)} |f_k'(t)|\, dt+\int_{\theta_k(\bar p_k)^c \cap b_\kappa} |f_k'(t)|\, dt+\int_{\theta_k(\bar p_k)^c \cap b_\kappa^c} |f_k'(t)|\, dt 
\nonumber \\
& \leq\mu_0^1(\theta_k(\bar p_k))(1-\delta)\lambda_1+ \mu_0^1(\theta_k(\bar p_k)^c) (1+\tau) \lambda_1+ C \mu_0^1(b_\kappa^c)\nonumber \\
& =   2 \rho(\bar p_k)  \lambda_1
-\mu_0^1(\theta_k(\bar p_k))\delta \lambda_1+ \mu_0^1(\theta_k(\bar p_k)^c) \tau \lambda_1+ C \mu_0^1(b_\kappa^c) \nonumber
\\
&< 2 \rho(\bar p_k)  \lambda_1
-\frac{\beta \delta \lambda_1}{4\omega}+ \mu_0^1(\theta_k(\bar p_k)^c) \tau \lambda_1+ C \kappa. \nonumber
\end{align*}
Thus, by choosing $\kappa, \tau>0$ sufficiently small (depending only on $\beta, \delta, \lambda_1, \omega, C$), we conclude
\begin{equation} \label{rhocontra}
f_k(\rho(\bar p_k))- f_k(-\rho(\bar p_k)) <  2 \rho(\bar p_k)  \lambda_1
-\frac{\beta \delta \lambda_1}{8\omega}, \ \ \forall k >K.
\end{equation}

On the other hand, for any $p \in B_{r_0}(0)$, we can view $u_k^1(p)$, $L^1(p)$ as points in $\R \simeq  \R \times \{0,\dots, 0\} \subset \R^N \simeq A_k \subset \PFk$.  Under this identification,
\[
|L^1(p)- u_k^1(p)| = d_k(L^1(p),  u_k^1(p)).
\]
Since $\sup_{B_{r_0}(0)} d_k(L^1(p),u_k^1(p)) 
   \leq 
   \sup_{B_{r_0}(0)} d_k(L_k(p), u_k(p)) < \frac{1}{k}$,
   we conclude
\[
\lim_{k \rightarrow \infty}  \sup_{p \in B_{r_0}(0)}|L^1(p)- u_k^1(p)| = 0.
\]
In particular, this implies $f_k(\rho(\bar p_k))- f_k(-\rho(\bar p_k)) \to 2 \rho(\bar p_k)\lambda_1$ as $k \rightarrow \infty$, contradicting (\ref{rhocontra}).
\end{proof}

\subsection{Completion of the proof of Proposition~\ref{Sun Lemma 12 Collapsed}} \label{subsec:SunCollapsed}

 We are now in position to contradict inequality~(\ref{geqep}) and finish the proof.

We apply Proposition~\ref{anglek} with $u$ replaced by  $u_k$ and $U$ replaced by an analogous set $U_k$ defined by $u_k$.  Thus, there exists $\theta_0 \in (0,\frac{\pi}{2}]$ (independent of $k$) such that  
$$
\forall p \in U_k \cap \{q \in \B_{r_0}(0): u_k(q) \not \in \PFk\},$$ 
there exists  $i  \in \{1, \dots, m\}$ such that 
\[
 \left|\frac{\partial u_k^i}{\partial x^i} \right|^2(p) \leq \cos^2 \theta_0  \cdot  \left|\frac{\partial u_k}{\partial x^i} \right|^2(p).
\]
For $\tau>0$,  the above inequality implies
\[
 \left|\frac{\partial u_k^i}{\partial x^i} \right|^2 (p)\leq   \cos^2 \theta_0   (1+\tau)^2  \lambda_i^2 \mbox{ \ or \ } \ (1+\tau)^2 \lambda_i^2   \leq   \left|\frac{\partial u_k}{\partial x^i} \right|^2(p).
 \]
In other words, 
\[
U_k \cap \{q \in \B_{r_0}(0): u_k(q) \not \in \PFk \} \subset \bigcup_{i=1}^m \Theta_k (\delta, i) \cup \Upsilon_k(\tau, i).
\]
where $\tau>0$ satisfying $\cos^2 \theta_0 (1+\tau)^2<1$ defines $\Upsilon_k(\tau, i)$ (cf~Lemma~\ref{upsilonprime}) and   $\delta>0$  satisfying $(1-\delta)^2=\cos^2 \theta_0 (1+\tau)^2<1$ defines $\Theta_k(\delta, i)$ (cf~Lemma~\ref{omega*}). 
For a large enough $k$, Lemma~\ref{upsilonprime} and Lemma~\ref{omega*}  imply that
$$
\mu_0\left(U_k \cap \{q \in \B_{r_0}(0): u_k(q) \not \in \PFk\} \right)<\epsilon.
$$
Since $U_k$ is of full measure by \cite[Theorem 1.6]{lytchak}, we conclude 
 \[
 \mu_0\left( \{q \in B_{r_0}(0):u_k(q) \notin \PFk\}\right)<\epsilon
 \]
which  contradicts (\ref{geqep}) and completes the proof of Proposition~\ref{Sun Lemma 12 Collapsed}.

\section{Homogeneous Approximations}\label{sec:approx}
 In a locally finite Euclidean building, any point has a neighborhood that can be isometrically and totally geodesically embedded into the tangent cone at that point. Thus, one can assume that $u$ locally maps into a cone, and  the sequence of blow up maps (defined in Section~\ref{subsec:rescaling}) all have the {\it same} conical target space.  Invoking the Arzela-Ascoli theorem, a subsequence of blow up maps converges to a tangent map, also into the same conical target space. Hence one can approximate the sequence of  blow up maps (and hence $u$) by a  {\it single} homogeneous degree 1 map.

As we are not presuming that $X$ is locally finite, we cannot assume that $u$ maps into a cone.  Thus, the blow up maps and tangent maps have different target spaces. The goal of this section is to construct a {\it sequence} of homogeneous degree 1 maps corresponding to a sequence of blow up maps such that the target space of the corresponding maps agree.

The main result of this section is Proposition \ref{LinearApprox}.
For simplicity, we use the following notation:   For the Gromov-Schoen blow up maps $u_\sigma:(B_1(0),g_\sigma)\to (X, d_\sigma)$ defined by \eqref{eq:usigmamap} and a sequence $\sigma_k \rightarrow 0$, we let
\[
u_k=u_{\sigma_k}, \ \ g_k=g_{\sigma_k}, \ \ d_k=d_{\sigma_k}, \ \mbox{ and } \ X_k=(X, d_k).
\]
 
\begin{proposition}\label{LinearApprox}
Let $u:(B_1(0),g) \to (X,d)$ be a harmonic map where $X$ is of type $W$ and the dimension of $X$ is at least $2$. For  $p \in M$ with $\text{Ord}^u(p)=1$, let $u_\sigma$ be the blow up maps at $p$ defined by \eqref{eq:usigmamap}. Then there exists 
\begin{itemize}
\item a sequence $\sigma_k \to 0$,
\item a sequence of  homogeneous degree 1 harmonic maps $L_k:\R^n \to A_k \subset X_k$, where $A_k$ is an apartment in $X_k$, and 
\item $r_0 \in (0,1)$
\end{itemize}
 such that the following properties are satisfied:
\begin{itemize}
\item[(i)]  
$(X_k, A_k, L_k)$ is an $(X_\omega, A_\omega, L_\omega)$-triple where 
$
(X_\omega,d_\omega)=\wlim \, (X,d_k, \star_k)$ and  $L_\omega=\wlim L_k:\R^n \rightarrow A_\omega \subset X_\omega$.  Here,  $\star_k=u_k(0)=u(p)$ and $A_\omega$ is an apartment of the building $X_\omega$. 
 \item[(ii)]  
 The energy density measures  and the directional energy density measures  of $u_k$ converge to that of $u_\omega$ in $B_{r_0}(0)$.
\item[(iii)] $\displaystyle{\lim_{k \rightarrow \infty} \sup_{x \in \B_{r_0}(0)}d_k( u_k(x), L_k(x)) = 0}$.
\end{itemize}
\end{proposition}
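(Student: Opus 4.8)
The plan is to produce the limiting data in an ultralimit, extend it to a flat with Proposition~\ref{prop:flats}, and then transport the resulting apartment and homogeneous map back to the rescaled spaces $X_k$. So, first, choose any sequence $\sigma_k\to0$ and write $u_k:=u_{\sigma_k}$, $g_k:=g_{\sigma_k}$, $d_k:=d_{\sigma_k}$, $X_k:=(X,d_k)$, $\star_k:=u(p)$. By the uniform energy and local Lipschitz bounds recalled in Section~\ref{subsec:rescaling}, \cite[Prop.~3.7, Thm.~3.11]{korevaar-schoen2} let us pass to a subsequence (relabeled $\sigma_k$) along which $u_k$ converges locally uniformly in the pullback sense to a tangent map and along which the energy density and directional energy density measures of $u_k$ converge weakly. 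By Remark~\ref{KLTheorem5.1} we identify the pullback limit with $u_\omega:=\wlim u_k:B_1(0)\to X_\omega$, where $X_\omega:=\wlim(X,d_k,\star_k)$ is a Euclidean building of type $W$ of the same dimension $N$ as $X$, and $u_k\to u_\omega$ in the pullback sense. Since $\text{Ord}^u(p)=1$, $u_\omega$ is a nonconstant homogeneous degree $1$ harmonic map (tangent maps of harmonic maps are harmonic). Applying Proposition~\ref{prop:flats} to $u_\omega$ yields $r_0\in(0,1)$, an apartment $A_\omega$ of $X_\omega$, and an extension $L_\omega:\R^n\to A_\omega\subset X_\omega$ of $u_\omega|_{B_{r_0}(0)}$ as a homogeneous degree $1$ harmonic map whose image $F_\omega:=L_\omega(\R^n)$ is a flat contained in $A_\omega$; in particular $L_\omega\equiv u_\omega$ on $B_{r_0}(0)$.

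The heart of the argument is to transport $A_\omega$ and $L_\omega$ back to $X_k$. After enlarging $W_{\mathrm{aff}}$ to contain all translations (Lemma~\ref{biggeraffinegp}, which leaves the type $W$ unchanged), fix a chart $\iota_\omega:\R^N\to A_\omega$ with $\iota_\omega(0)=\wlim\star_k$. One then produces, for each $k$, an apartment $A_k$ of $X_k$ and a chart $\iota_k:\R^N\to A_k$ with $\wlim\iota_k=\iota_\omega$, i.e.\ $\iota_\omega(v)=[(\iota_k(v))_k]$ for all $v\in\R^N$. This coherent realization of an apartment of the ultralimit as an ultralimit of apartments is where the Euclidean building structure replaces local finiteness: one selects a regular geodesic line of $A_\omega$, writes it as the ultralimit of geodesic segments of $X_k$ joining $\wlim$-approximants of two of its points, extends these segments within apartments of $X_k$ (possible by the building axioms), observes that the extensions are eventually regular because regularity of a direction is controlled by the finite group $W$ and hence an open condition, and takes $A_k$ to be the parallel set of the extended line; parallel sets commute with ultralimits, so $\wlim A_k=A_\omega$, and the charts match up (alternatively this is part of the structure theory of ultralimits of Euclidean buildings in \cite{kleiner-leeb}). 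Set $\phi_k:=\iota_k\circ\iota_\omega^{-1}:A_\omega\to A_k$, an isometry whose representation in the charts is the identity of $\R^N$, hence compatible with $W$, and put
\[
L_k:=\phi_k\circ L_\omega:\R^n\to A_k\subset X_k.
\]
Then $L_k$ is a nonconstant homogeneous degree $1$ harmonic map (being $L_\omega$ followed by an isometry onto $A_k$), $F_k:=L_k(\R^n)=\phi_k(F_\omega)\subseteq A_k$, so $(X_k,A_k,L_k)$ is an $(X_\omega,A_\omega,L_\omega)$-triple in the sense of Definition~\ref{def:triple}; and $\wlim L_k=(\wlim\phi_k)\circ L_\omega=L_\omega$ since $\wlim\phi_k$ is the inclusion $A_\omega\hookrightarrow X_\omega$. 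This establishes (i).

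Property (ii) is exactly the weak convergence of measures arranged in the first step: the energy density and directional energy density measures of $u_k$ converge to those of the pullback limit, hence to those of $u_\omega$ (which is the image of that limit under an isometric totally geodesic embedding), and restriction to $B_{r_0}(0)$, whose boundary is null for these $L^1$ densities, preserves the convergence. For (iii), note that for each $x\in B_{r_0}(0)$ we have $\wlim_k u_k(x)=u_\omega(x)=L_\omega(x)=\wlim_k L_k(x)$ in $X_\omega$, so $\wlim_k d_k(u_k(x),L_k(x))=0$; since $x\mapsto d_k(u_k(x),L_k(x))$ is uniformly Lipschitz on $\overline{B_{r_0}(0)}$, testing on a finite net and using this equicontinuity upgrades this to $\wlim_k\sup_{\overline{B_{r_0}(0)}}d_k(u_k,L_k)=0$. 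Moreover $L_k|_{B_{r_0}(0)}$ is determined by $\phi_k|_{F_\omega}$, which can be assembled from geodesic segments of $X_k$ joining the points $u_k(\pm r\theta_i)$ for a finite spanning family of directions $\theta_i$; by the \emph{genuine} pullback convergence of $u_k$ these converge genuinely to the corresponding geodesics of $F_\omega$, and hence after a final harmless subsequence (which does not affect (i) or (ii), genuine limits agreeing with ultralimits when they exist) we obtain the genuine limit $\lim_{k\to\infty}\sup_{x\in B_{r_0}(0)}d_k(u_k(x),L_k(x))=0$.

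The main obstacle is the transport step: realizing the apartment $A_\omega$ of the ultralimit $X_\omega$ as a coherent ultralimit of apartments $A_k$ of $X_k$ with compatible charts, so that the triple relation of Definition~\ref{def:triple} holds and enough $C^0$ control survives to give (iii). This is precisely where \cite{gromov-schoen} invoked local finiteness, and it must here be extracted instead from the Euclidean building axioms governing how walls and chambers constrain the intersection and limiting behavior of apartments and flats, together with the bookkeeping needed to keep all relevant ultralimits genuine limits.
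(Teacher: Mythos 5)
Your proof follows the same route as the paper's: take the ultralimit $u_\omega$ of a subsequence of blow‑up maps, identify it with the pull‑back limit to get the energy convergence in (ii), apply Proposition~\ref{prop:flats} to extend $u_\omega$ on $B_{r_0}(0)$ to a homogeneous degree $1$ map $L_\omega$ into an apartment $A_\omega$, realize $A_\omega$ with a chart $\iota_\omega$ that is an ultralimit of charts $\iota_k:\R^N\to A_k\subset X_k$ (citing Kleiner--Leeb's Theorem~5.1.1), set $L_k:=\iota_k\circ\iota_\omega^{-1}\circ L_\omega$ to get the triple property in (i), and finally upgrade the pointwise ultralimit statement $\wlim d_k(u_k(x),L_k(x))=0$ to the genuine uniform limit (iii) via uniform Lipschitz bounds, a finite net, and a further subsequence -- which is precisely what the paper's Lemma~\ref{lem:ukLk} does. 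The only cosmetic differences are your optional enlargement of $W_{\mathrm{aff}}$ to contain translations (not needed, since any chart $\iota_\omega$ suffices) and a sketch of a direct construction of the approximating apartments $A_k$ via regular geodesics and parallel sets, in place of the paper's bare citation to the proof of \cite[Theorem 5.1.1]{kleiner-leeb}; neither changes the substance of the argument.
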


\begin{proof}
As explained in Section~\ref{subsec:rescaling}, a subsequence of the $u_k$ (which we still denote by $u_k$) converges locally uniformly to a tangent map $u_*:B_1(0) \to (X_*,d_*)$. Let  $(X_\omega,d_\omega):=\wlim \, (X,d_k, \star_k)$ and  $u_\omega:=\wlim u_k$. By  Section~\ref{sec:limitspace}, we can assume $u_*=u_\omega$. Thus, the (directional) energy density measures   of $u_k$ converge to those of $u_\omega$, and  $u_\omega$ is a non-constant homogeneous degree 1 harmonic map.

By \cite[Theorem 5.1.1]{kleiner-leeb}, $X_\omega$ is a Euclidean building of type $W$.
By Proposition~\ref{prop:flats}, there exists $r_0 \in (0,1)$ and an apartment $A_\omega \subset X_\omega$ such that $u_\omega|B_{r_0}(0)$ can be extended  as a homogeneous degree 1 harmonic map $L_\omega:\R^n \rightarrow A_\omega \subset X_\omega$. 
Consequently, we obtain assertion (ii) about  the convergence of energies in $B_{r_0}(0)$ where $u_\omega=L_\omega$.

A chart  $\iota_\omega=[(\iota_1, \iota_2, \dots)]:\R^N \to A_\omega$  is an ultralimit of the sequence of charts $\iota_k: \R^N \rightarrow A_k$  where each $A_k$ is an apartment of the building $X_k$ (cf~\cite[Proof of Theorem 5.1.1]{kleiner-leeb}).
Let
\begin{equation}\label{eq:Lk}
L_k:= \iota_k \circ \iota_\omega^{-1} \circ L_\omega: \R^n \to A_k \subset X_k.
\end{equation}
For $x \in \R^n$, choose  $y \in \R^N$ satisfying $L_\omega(x) = \iota_\omega(y)$. Then
\begin{align*}
[(L_k(x))] =[(\iota_k \circ \iota_\omega^{-1} \circ L_\omega(x))]
=[(\iota_k(y))]
=\iota_\omega(y)
&=L_\omega(x).
\end{align*} 
Thus, $\wlim L_k= L_\omega$. 
The homogeneity of $L_k$ follows immediately from the definition since $\iota_k$ and $\iota_\omega$ are isometric embeddings.  To see that  $(X_k, A_k, L_k)$ is a $(X_\omega, A_\omega, L_\omega)$-triple,  note that the isometry $\phi:A_\omega \to A_k$ is given simply by $\phi= \iota_k \circ \iota_\omega^{-1}$.   This completes the proof of (i).
Finally, to prove (iii), we take a further subsequence according to Lemma~\ref{lem:ukLk} below.
\end{proof}

\begin{lemma}\label{lem:ukLk}
There exists a subsequence $(k_j)$ such that
\[
\lim_{j \rightarrow \infty} \sup_{\B_{r_0}(0)}d_{k_j}(u_{k_j}(x), L_{k_j}(x)) = 0.
\]
\end{lemma}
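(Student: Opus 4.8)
The plan is to study the continuous functions $h_k\colon\overline{B_{r_0}(0)}\to[0,\infty)$ given by $h_k(x):=d_k(u_k(x),L_k(x))$ and to prove that
\[
\wlim_k\, \max_{\overline{B_{r_0}(0)}}h_k = 0 ;
\]
since $h_k$ is continuous this maximum equals the supremum over the open ball, and once the displayed identity is known a diagonal extraction over the sets $\{k:\max h_k<1/j\}\in\omega$ (each of which is infinite, $\omega$ being nonprincipal) yields a subsequence $(k_j)$ with $\sup_{B_{r_0}(0)}h_{k_j}\to0$. Two facts feed the argument. First, $\{h_k\}$ is uniformly Lipschitz on $\overline{B_{r_0}(0)}$: because $\text{Ord}^u(p)=1$, the blow up maps $u_k=u_{\sigma_k}$ satisfy $E^{u_k}(1)\le 2$ for $\sigma_k$ small, hence are uniformly Lipschitz on $\overline{B_{r_0}(0)}\Subset B_1(0)$ by \cite[Theorem 2.4.6]{korevaar-schoen1} together with $\|g_k-\deltae\|_{C^2}\to 0$; and $L_k=\iota_k\circ\iota_\omega^{-1}\circ L_\omega$ has the same finite Lipschitz constant as $L_\omega$ since $\iota_k,\iota_\omega$ are isometric embeddings. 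By the triangle inequality, $|h_k(x)-h_k(y)|\le C|x-y|$ on $\overline{B_{r_0}(0)}$ for a fixed $C$ and all large $k$. Second, for each fixed $x\in\overline{B_{r_0}(0)}$ we have $\wlim_k h_k(x)=d_\omega(u_\omega(x),L_\omega(x))=0$: the sequences $(u_k(x))$ and $(L_k(x))$ represent the points $u_\omega(x)$ and $L_\omega(x)$ of $X_\omega$, and $u_\omega=L_\omega$ on $B_{r_0}(0)$ by Proposition~\ref{prop:flats} and the construction of $L_\omega$ earlier in the proof of Proposition~\ref{LinearApprox}, hence on $\overline{B_{r_0}(0)}$ by continuity.

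To combine these I would pick, for each $k$, a maximizer $x_k\in\overline{B_{r_0}(0)}$ of $h_k$ and let $x_\infty\in\overline{B_{r_0}(0)}$ be the ultralimit of $(x_k)$ (which exists by compactness of the closed ball). The uniform Lipschitz bound gives
\[
\Bigl|\wlim_k h_k(x_k) - \wlim_k h_k(x_\infty)\Bigr| = \wlim_k |h_k(x_k)-h_k(x_\infty)| \le C\,\wlim_k|x_k-x_\infty| = 0 ;
\]
here all sequences are $\omega$-essentially bounded, since $h_k(x)\le h_k(0)+Cr_0$ and $\wlim_k h_k(0)=0$ (because $L_\omega(0)=u_\omega(0)$ is the base point of $X_\omega$). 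Together with the pointwise vanishing at $x=x_\infty$, this gives $\wlim_k\max_{\overline{B_{r_0}(0)}}h_k=\wlim_k h_k(x_k)=0$, completing the proof as above.

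The step requiring care is precisely this last passage from the pointwise-in-$\omega$ vanishing to a uniformly small subsequence. The identity $\wlim_k u_k=u_\omega$ (and $\wlim_k L_k=L_\omega$) is not a convergence statement for an ordinary subsequence — an infinite set of indices need not belong to $\omega$ — so one cannot directly ``pass to a subsequence'' there. The compactness-plus-equicontinuity detour is what makes the transfer, and it is at this point that the uniform Lipschitz estimates, and therefore the hypothesis $\text{Ord}^u(p)=1$ that makes the blow-up energies bounded, enter essentially.
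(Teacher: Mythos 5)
Your argument is correct and takes essentially the same route as the paper: uniform Lipschitz bounds on $u_k$ and $L_k$, pointwise vanishing $\wlim_k d_k(u_k(x),L_k(x))=0$ for fixed $x$, and compactness of $\overline{B_{r_0}(0)}$, followed by a diagonal extraction of $k_j$ from sets in $\omega$. The only difference in execution is that the paper passes from pointwise to uniform by taking a finite $\frac{1}{3Mj}$-net and intersecting the finitely many full-$\omega$-measure index sets $S_\alpha$, whereas you take a maximizer $x_k$ of $h_k$, use compactness to form its ultralimit $x_\infty$, and evaluate there.
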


\begin{proof}
By the definition of $L_k$ (cf~(\ref{eq:Lk})), the uniform energy bound for $u_k$, and \cite[Theorem 2.4.6]{korevaar-schoen1}, there exists an $M>0$ independent of $k \in \N$ such that for all $x,y \in \B_{r_0}(0)$,
\[
d_k(L_k(x), L_k(y))\leq M|x-y| \quad \quad \text{and} \quad \quad d_k(u_k(x), u_k(y))\leq M|x-y|.
\]  

For each $j \in \mathbb N$,  let $\Omega_j:=\{x_1, x_2, \dots, x_{N_j}\}\subset B_{r_0}(0)$ be a finite set such that, for all $x \in \B_{r_0}(0)$, there exists $x_\alpha \in \Omega_j$ with  $|x-x_\alpha|< \frac 1{3Mj}$. Define
\[
S_\alpha:=\{ k \in \N: d_k(u_k(x_\alpha), L_k(x_\alpha))< \frac 1{3j}\} \subset \N.
\]As 
\[
0=d_\omega(u_\omega(x_\alpha), u_\omega(x_\alpha))= d_\omega(u_\omega(x_\alpha), L_\omega(x_\alpha))= \wlim d_k(u_k(x_\alpha), L_k(x_\alpha)),
\]we see that $\omega(S_\alpha)=1$ for each $\alpha \in \{1, \dots, N_j\}$. (For more information on the ultrafilter $\omega$, consult \cite[Section 2.4.1]{kleiner-leeb}.) Therefore 
\[
\omega\left(\bigcap_{\alpha=1}^{N_j} S_\alpha\right)=1.
\]Choose $k_j \in \bigcap_{\alpha=1}^{N_j} S_\alpha$ inductively such that $k_1 \geq 1$ and $k_{j+1} > k_j$.

 For $x \in B_{r_0}(0)$, choose $x_\alpha \in \Omega_j$ such that $|x-x_\alpha|< \frac 1{3Mj}$. Then
\begin{align*}
d_{k_j}(u_{k_j}(x), L_{k_j}(x)) &\leq d_{k_j}(u_{k_j}(x), u_{k_j}(x_\alpha))+d_{k_j}(u_{k_j}(x_\alpha), L_{k_j}(x_\alpha))+d_{k_j}(L_{k_j}(x), L_{k_j}(x_\alpha))\\
& < M|x-x_\alpha| + \frac 1{3j} + M|x-x_\alpha|
\leq \frac 1j.
\end{align*}
This verifies that $\sup_{\B_{r_0}(0)}d_{k_j}(u_{k_j}(x), L_{k_j}(x))\rightarrow  0$ as $j \rightarrow \infty$. 
\end{proof}


\section{Local Product Structure}\label{sec:noregFin}

We are now ready to prove the local product structure  of a harmonic map at an order 1 point, as stated in Theorem \ref{GS Theorem 5.1}.  This  is proven in \cite[Theorem 5.1]{gromov-schoen} for locally finite Euclidean buildings, and the proof here closely follows  their proof. There is a key difference --  the use of Proposition~\ref{Sun Lemma 12 Collapsed}.  

\begin{definition}
For a map $u:B_1(0)\to X$, given $B_\sigma(x) \subset B_1(0)$, the {\bf remainder} $R^u(x,\sigma)$ is
\[
R^u(x,\sigma)=\inf_L\sup_{B_\sigma(x)}d(u(y),L(y))
\]
where the infimum is taken over homogeneous degree 1 maps  about $x$.
\end{definition}

\begin{definition}\label{def:aomegaw}
Fix a Riemannian domain $\Omega$ and a finite reflection group $W$. Consider the collection of maps $$A_{\Omega,W}:=\{u:\Omega\to X\,|\,u\text{ is a harmonic map, }X\text{ is a Euclidean building of type }W\}.$$   Let $K\subset\Omega$ be a compact subset. The class $A_{\Omega,W}$ is a {\bf $K$-intrinsically differentiable class} if there are constants $\sigma_0,c>0$ and $\beta\in(0,1]$ (depending only on $K,\Omega, W$) so that, for any $u\in A_{\Omega,W}$, any $x\in K$, and for all $0<\sigma<\sigma_0$ such that $B_\sigma(x) \subset \Omega$, 
\[
R^u(x,\sigma)\leq c\sigma^{1+\beta}R^u(x,\sigma_0).
\]
\end{definition}

\begin{definition} \label{def:essreg}
A Euclidean building $X$ of type $W$ is {\bf essentially regular} if, for every Riemannian domain $\Omega$, and every compact $K\subset\Omega$, $A_{\Omega,W}$ is a $K$-intrinsically differentiable class.
\end{definition}

\begin{theorem}\label{GS Theorem 5.1}
Assume $\Waff=\rho^{-1}(W)$.
Fix $E_0>0$,  $r_0 \in (0,1)$ and a  homogeneous degree 1 map 
\[
L:\R^n \rightarrow A \subset X
\]
where the dimension of $X$ is at least $2$ and $A$ is an apartment of $X$.   
Then there exists $\delta_0=\delta_0(E_0, r_0, X, A, L)>0$ with the following property: \\
\\
Let  $(X',A',L')$  be an $(X,A,L)$-triple and let $P_{F'}=F' \times Y$ be the union of all flats parallel to $F':=L'(\R^n)\simeq \R^m$.
Let $u:(B_1(0),g)\to X'$ be a finite energy harmonic map with ${^gE}^u[B_1(0)]<E_0$. 
If 
\begin{itemize}
\item $P_{F'}$ is  essentially regular, 
\item $\|g-\deltae\|_{C^2(B_1(0))}<\delta_0$, and
\item $\sup_{B_{r_0}(0)}d(u,L')<\delta_0$,
\end{itemize}
then $u(B_{r_0/4}(0))\subset P_{F'}$.
\end{theorem}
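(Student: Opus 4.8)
The plan is to follow the proof of \cite[Theorem~5.1]{gromov-schoen} essentially verbatim, with Proposition~\ref{Sun Lemma 12 Collapsed} substituting for the ``effective containment'' of $L$ in $P_{F'}$ which Gromov and Schoen have available when $X$ is locally finite. So I argue by contradiction: if the conclusion fails for every choice of $\delta_0$, one obtains $(X,A,L)$-triples $(X_k,A_k,L_k)$, metrics $g_k$ with $\|g_k-\deltae\|_{C^2}\to0$, harmonic maps $u_k:(B_1(0),g_k)\to X_k$ with ${}^{g_k}E^{u_k}[B_1(0)]<E_0$ and $\sup_{B_{r_0}(0)}d_k(u_k,L_k)\to0$, and points $p_k\in B_{r_0/4}(0)$ with $u_k(p_k)\notin\PFk$.

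The first step produces a harmonic map into $\PFk$ near $u_k$. Set $v_k:=\pi_k\circ u_k:B_1(0)\to\PFk$ and let $w_k:B_{r_0/2}(0)\to\PFk$ be the energy minimizer into the convex set $\PFk$ with trace $v_k|_{\partial B_{r_0/2}(0)}$. Then $w_k$ is harmonic into $\PFk$, and also into $X_k$: any $X_k$-valued competitor with the same trace (which already lies in $\PFk$) may be projected by $\pi_k$ without raising energy. Now $d_k(u_k,w_k)$ and $d_k(w_k,L_k)$ are subharmonic on $B_{r_0/2}(0)$ (distances between harmonic maps into a CAT(0) space, recalling that $L_k$ is homogeneous degree~$1$ harmonic into $\PFk$), and their traces on $\partial B_{r_0/2}(0)$ are bounded by $d_k(u_k,\PFk)\le d_k(u_k,L_k)$ and by $d_k(u_k,\PFk)+d_k(u_k,L_k)\le 2d_k(u_k,L_k)$, both of which tend to $0$. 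Hence $\sup_{B_{r_0/2}(0)}d_k(u_k,w_k)\to0$ and $\sup_{B_{r_0/2}(0)}d_k(w_k,L_k)\to0$: $u_k$ is uniformly close to a harmonic map $w_k$ into the essentially regular building $\PFk$, and $w_k$ is uniformly close to the homogeneous degree~$1$ map $L$.

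The second step is the scale-by-scale argument of \cite[\S5]{gromov-schoen}. Since $\PFk$ is essentially regular, with constants depending only on $W$ and the domain (so uniform in $k$), the remainder bound $R^{w_k}(\cdot,\sigma)\le c\,\sigma^{1+\beta}R^{w_k}(\cdot,\sigma_0)$ holds on $B_{r_0/4}(0)$, and $R^{w_k}(\cdot,\sigma_0)\to0$ because $L$ is an admissible competitor in the remainder and $d_k(w_k,L)\to0$; thus at every $x\in B_{r_0/4}(0)$ the map $w_k$, and hence in $C^0$ the map $u_k$, is approximated on $B_\sigma(x)$ by a homogeneous degree~$1$ harmonic map with error $o(\sigma)$ uniformly in $k$. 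Re-applying Proposition~\ref{Sun Lemma 12 Collapsed} at each such scale keeps $u_k$ inside $\PFk$ away from a set of relative measure $o(1)$ in $B_\sigma(x)$, for all small $\sigma$ uniformly in $x$ and $k$. Combined with the maximum principle for the continuous, non-negative subharmonic function $d_k^2(u_k,\PFk)$ — which forces every connected component of its positivity set meeting $B_{r_0/4}(0)$ to reach $\partial B_{r_0/2}(0)$ — this super-linear decay of the ``bad'' set at every scale is incompatible with the uniform interior Lipschitz bound $M$ for the $u_k$, by which the open set $\{d_k(u_k,\PFk)>0\}$ contains the full ball $B_{\ell_k/(2M)}(p_k)$, where $\ell_k:=d_k(u_k(p_k),\PFk)>0$. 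Hence $u_k(B_{r_0/4}(0))\subset\PFk$ for $k$ large, the desired contradiction.

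The heart of the matter, and the main obstacle, is the second step: upgrading ``$u$ lies in $P_{F'}$ up to a set of small measure'' to ``$u$ lies in $P_{F'}$ pointwise near the center''. In the locally finite case this is immediate from effective containment; here the role of Proposition~\ref{Sun Lemma 12 Collapsed} is precisely to supply the iteration of \cite[\S5]{gromov-schoen} with the same closeness-in-measure input at every scale, and making this work requires tracking the dependence of the constants in that iteration (the purpose of the appendix) together with the normalization $\Waff=\rho^{-1}(W)$.
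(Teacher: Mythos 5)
You have the right architecture: substitute Proposition~\ref{Sun Lemma 12 Collapsed} for effective containment, use essential regularity of $P_{F'}$ in a scale-by-scale iteration, and finish with a measure-versus-Lipschitz contradiction. But there is a genuine gap in the second step, and it is precisely the place where the constants must be tracked.

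The difficulty is that you construct $w_k$ \emph{once}, as the energy minimizer into $\PFk$ with trace $\pi_k\circ u_k$ on the full boundary $\partial B_{r_0/2}(0)$, and then appeal to essential regularity of $\PFk$ applied to $w_k$ at all smaller scales. This does not give control of $u_k$ itself at small scales. The sup bound you obtain, $\sup_{B_{r_0/2}(0)}d_k(u_k,w_k)\le\sup_{\partial B_{r_0/2}(0)}d_k(u_k,\PFk)$, is a fixed quantity in $k$; after rescaling by $\sigma$, it becomes $\sigma^{-1}\sup d_k(u_k,\PFk)$, which blows up as $\sigma\to 0$. So the remainder bound for $w_k$ does not transfer to a usable $o(\sigma)$ bound for $u_k$, and you cannot re-invoke Proposition~\ref{Sun Lemma 12 Collapsed} at small scales. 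What makes the paper's iteration close is that at each scale $i$ it \emph{re-chooses} a good radius $r\in[r_0/8,r_0/4]$ where the boundary bad set has measure $<8\epsilon_1/r_0$, so the comparison map $v$ (minimizing into $P_{F'}$ on $B_r$) satisfies the boundary integral bound $\int_{\partial B_r}d_i({}^iu,v)\le 8\epsilon_1 D_i/r_0$, which after subharmonicity yields $\sup_{B_{r_0/16}}d_i({}^iu,v)\le c_1\epsilon_1 D_i/r_0$. The extra factor $\epsilon_1$ is what makes $D_{i+1}\le \tfrac12 D_i$ possible; using the full boundary as you do loses this gain and the recursion does not contract.

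A second, related point: Proposition~\ref{Sun Lemma 12 Collapsed} is stated for a \emph{fixed} triple $(X,A,L)$, and $\eta$ depends on $L$. The homogeneous approximations $M_{x,\sigma}$ produced by essential regularity of $\PFk$ vary with $x,\sigma$ and need not belong to any $(X,A,L)$-triple. To re-apply the Proposition at scale $\theta^i$ one must apply it with the fixed reference map $L_0$ (which is a translate of $L'$, hence a triple — this is the role of $\Waff=\rho^{-1}(W)$) and its rescalings, and therefore one must show that $\sup d_i({}^iu,{}^iL_0)$ stays $\le\eta$ \emph{for all $i$}. This is the inductive claim (1) in the paper, which is proved jointly with the decay of $D_i$; it does not follow from your remainder bound alone. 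Your proposal never produces nor controls this uniform-in-scale closeness to a fixed $(X,A,L)$-triple, so the re-application of the Proposition at small $\sigma$ is unjustified. (Your observations that $w_k$ is also harmonic into $X_k$, and your Lipschitz-ball contradiction at the end, are both fine and somewhat cleaner than the paper's corresponding steps, but they sit on top of the iteration, and the iteration is where the gap is.)
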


\begin{remark}
The condition that $\Waff=\rho^{-1}(W)$ is a technical one which we introduce here for convenience. Given any building $X$ of type $W$, we can simply enlarge the group $\Waff$ to satisfy this hypothesis. (Recall Lemma \ref{biggeraffinegp}.)

The condition on $\Waff$ is not natural when thinking about a single building, since in principle $X$ may no longer be thick. But in the proof below, we need to replace $L'$ by a homogeneous degree 1 map $L_0$ which agrees with $u$ at a particular point and we need to find an apartment $A_0$ such that $(X', A_0, L_0)$ is an $(X,A,L)$-triple to invoke Proposition \ref{Sun Lemma 12 Collapsed}. This is straightforward to accomplish when $\Waff = \rho^{-1}(W)$.
\end{remark}

\begin{proof}
We will choose an $\epsilon_1>0$ toward the conclusion of the proof, but for the moment demand only that $100\epsilon_1<\mu_0(B_{\frac {r_0}4}(0))$. Now choose $\eta=\eta(\epsilon_1,E_0, r_0, X, A, L)>0$ as in Proposition~\ref{Sun Lemma 12 Collapsed}. Choose $\delta_0>0$ such that $2(1+4 \theta^{-1} )\delta_0 \leq\eta$, where $\theta \in (0, 1/4]$ will be chosen later. 
This bound implies, in particular, that $$\mu_0\{x\in B_{\frac{r_0}{2}}(0):u(x)\notin P_{F'}\}< \epsilon_1<\mu_0(B_{\frac{r_0}{4}}(0)).$$  Hence, there {\em are} points of $B_{\frac {r_0}2}(0)$ with $u(x)\in P_{F'}$.

Let $x_0\in B_{\frac{r_0}{2}}(0)$ be one such point.  
By the third bullet point, $d(u(x_0),L'(x_0))<\delta_0$. Hence, there exists a flat $F_0$ parallel to $F'$ such that $d(F', F_0) < \delta_0$ and $u(x_0) \in F_0$. Using the product structure $F' \times Y\simeq P_{F'} $ of Lemma~\ref{Xnot}, let $L'(x_0) = (0,y')\in F' \times \{y'\} \simeq F'$ and let $t \in F', y_0 \in Y$ such that $u(x_0)= (t,y_0) \in F' \times \{y_0\} \simeq F_0$.  Let $A_Y$ be an apartment of $Y$ containing $y'$ and $y_0$. Thus, $F'$ and $F_0$ are both contained in the apartment $F' \times A_Y\simeq A_0$  of $P_{F'}$. Let  $\tau:A_0 \to A_0$ be a translation which takes $(0,y')$ to $(t,y_0)$ and define $L_0=\tau \circ L'$.  Then $L_0(x_0)=u(x_0)$ and $\sup_{B_{\frac{r_0}2} (x_0)}d(L_0,u)< 2\delta_0$.

 Following the ideas in Remark \ref{rem:thetanot}, let $\iota_{A'}:\R^N \to A'$ be a chart in $\varphi \cdot \mathcal A$ where $\varphi:\R^N \to \R^N$ is an orthogonal transformation such that $\iota_{A'}(\R^m \times \{(0, \dots, 0)\}) = F'$. 
 Let $\iota_{A_0}$ denote a chart of $\varphi \cdot \mathcal A$ such that the restriction of $\iota_{A_0} \circ\iota_{A'}^{-1}$  to $\R^m$ is the identity map. Let $\hat \phi:A \to A_0$ where $\hat \phi:=\tau \circ \iota_{A_0} \circ\iota_{A'}^{-1}\circ \phi$. (Here $\phi:A \to A'$ is as in Definition \ref{def:triple}.) Since $\Waff = \rho^{-1}(W)$, $\hat \phi$ satisfies the necessary conditions which make $(X', A_0, L_0)$ an $(X,A,L)$-triple. 

Fix normal coordinates centered at $x_0$. For a map $f:B_{\frac{r_0}{2}}(x_0)\to (X',d')$, define ${^if}:(B_{\frac {r_0}{2}}(0), g_i)\to(X',d_i)$ where ${^if}(x):=f(\theta^ix)$, $g_i(x) = \theta^{-i}g(\theta^ix)$, and $d_i:=\theta^{-i}d$. The uniform Lipschitz bounds on $u$ on $B_{\frac{r_0}{2}}(x_0)$, which depend only on ${^gE}^u[B_1(0)]$, imply uniform energy bounds on ${^{g_i}E}^{{^iu}}[B_{\frac{r_0}{2}}(0)]$.

Our inductive claim is as follows:
\noindent For $\theta>0$ sufficiently small and for each $i \in \mathbb Z_{\geq 0}$, 
\begin{enumerate}
\item there exists $\delta_i>0$ such that $$\sup_{B_{\frac{r_0}4}(0)}d_i({^iu},{^iL}_0)\leq 2 \delta_i\leq 2\Big(1+2\theta^{-1}\sum_{j=0}^{i-1}2^{-j}\Big)\delta_0\leq 2(1+4\theta^{-1})\delta_0,$$where for $i=0$ we presume that $\sum_{j=0}^{-1}2^{-j}=0$, 
\item there exists a homogeneous degree $1$ map \[M_i:(B_{\frac{r_0}4}(0),g_i)\to (P_{F'}, d_i)\] so that \[\sup_{B_{\frac{r_0}4}(0)}d_i({^iu},M_i)=D_i\leq2^{-i+1}\delta_0.\]
\end{enumerate}
For the base case, $i=0$, we set $M_0=L_0$ and observe that with $2\delta_0$ in the first inequality, $D_0=2\delta_0$ in the second, the claimed bounds hold.

We now suppose that these inequalities hold for some $i$ and argue that they hold for $i+1$. First, we observe $u(x_0)={^iu}(0)={^iL}_0(0)$, so that
\[
d_i({^iL}_0(0),M_i(0))\leq D_i.
\]
While the triangle inequality implies
\[
\max_{\partial B_{\frac{r_0}4}(0)} d_i({^iL}_0,M_i)\leq 2\delta_i+D_i,
\]
together with the homogeneity of ${^iL}_0,M_i$ we have
\[
\sup_{B_{\frac{\theta r_0}4}(0)}d_i({^iL}_0,M_i)\leq 2\theta \delta_i+D_i.
\]
Combining this with the assumed bounds on $d_i({^i}u,M_i)$, we see that
\[
\sup_{B_{\frac{\theta r_0}4}(0)} d_i({^iu},{^iL}_0)\leq 2\theta\delta_i+2D_i.
\]
In particular, considering the rescaled distance $d_{i+1}$ on $B_{\frac{r_0}2}(0)$, we have that
\[
\sup_{B_{\frac{r_0}4}(0)}d_{i+1}({^{i+1}u},{^{i+1}L}_0)\leq 2\delta_i+2\theta^{-1}D_i=:2 \delta_{i+1}.
\]
The assumed bounds on $\delta_i,D_i$ immediately imply the needed bound on $\delta_{i+1}$.

We now turn our attention to point (2). 
Since $(X',A_0, L_0)$ is an $(X,A,L)$-triple
and $2\delta_i \leq 2(1+4\theta^{-1})\delta_0\leq\eta$, applying Proposition~\ref{Sun Lemma 12 Collapsed}, 
\[
\mu_0^n\{x\in B_{\frac {r_0}{4}}(0):{^iu}(x)\notin P_{F'}\}<\epsilon_1.
\]
In particular, there is at least one radius $r\in[r_0/8, r_0/4]$ so that
\[
\mu_0^{n-1}\{x\in\partial B_r(0):{^iu}(x)\notin P_{F'}\}<8\epsilon_1/r_0.
\]
Let $\pi:X' \to P_{F'}$ denote the closest point projection, and let $v$ be the energy-minimizing map $v:B_r(0)\to P_{F'}$ with $v=\pi\circ{^iu}$ on $\partial B_r(0)$.  We shall first show that $v$ is very close to ${^iu}$.  Point (2) of the inductive hypothesis implies that ${^iu}$ is at distance at most $D_i$ from $P_{F'}$ on $B_{\frac{r_0}4}(0)$, and the measure estimate implies that in fact
\[
\int_{\partial B_r(0)}d_i({^iu},v)d\Sigma\leq8\epsilon_1D_i/r_0.
\]
Since \cite[Equation (2.2)]{gromov-schoen} can be extended to all Euclidean buildings, we may follow the proof of \cite[Lemma 5.3]{gromov-schoen} to see that $d_i({^iu},v)$ is subharmonic. This implies that for some constant $c_1$ depending only on the domain,
\begin{equation}\label{eq:subharm}
\sup_{B_{\frac{r_0}{16}}(0)}d_i({^iu},v)\leq c_1\epsilon_1D_i/r_0.
\end{equation}

Now, by hypothesis $P_{F'}$ is essentially regular and thus, $v$ is $B_r(0)$-intrinsically differentiable. It follows that there exists a homogeneous degree 1 map $ \widetilde M$ with $\widetilde M(0)=v(0)$, such that for any homogeneous degree $1$ map $M$ and sufficiently small $\theta>0$, 
\[
\sup_{B_{\frac{\theta r_0}4}(0)}d_i(v,\widetilde M)\leq c_2\theta^{1+\beta}\sup_{B_{\frac{r_0}{16}}(0)}d_i(v,M).
\]
Here $c_2$ depends on the domains $K=\overline{B_{\frac{r_0}{16}}(0)}$, $\Omega=B_{\frac{r_0}8}(0)$, and on the total energy of $v$ (and hence of $u$).  We remark at this point that, although $c_2$  also depends on the metric on $B_{\frac{r_0}8}(0)$, since these metrics are being rescaled towards the Euclidean metric, the $c_2$ involved will improve as the induction continues. 

Applying this for $M=M_i$, and using the triangle inequality, the inductive assumption, and (\ref{eq:subharm}),
\begin{align*}
\sup_{B_{\frac{\theta r_0}4}(0)}d_i(v,\widetilde M)&\leq c_2\theta^{1+\beta}\sup_{B_{\frac{r_0}{16}}(0)}d_i(v,M_i)\\
&\leq c_2\theta^{1+\beta}\sup_{B_{\frac{r_0}{16}}(0)}\left(d_i(M_i,{^iu})+d_i({^iu},v)\right)\\
&\leq c_2\theta^{1+\beta}D_i(1+c_1\epsilon_1/r_0).
\end{align*}
Again applying the triangle inequality and (\ref{eq:subharm}),
\[
\sup_{B_{\frac{\theta r_0}4}(0)}d_i({^iu},\widetilde M)\leq (c_1\epsilon_1/r_0+c_2\theta^{1+\beta}+c_1c_2\theta^{1+\beta}\epsilon_1/r_0)D_i.
\]
Now, as long as we take $\theta,\epsilon_1$ sufficiently small, we may absorb the final term into the former two, so
\begin{equation}\label{eq:keyind}
\sup_{B_{\frac{\theta r_0}4}(0)}d_i(^iu,{{\widetilde M}})\leq2(c_1\epsilon_1/r_0+c_2\theta^{1+\beta})D_i.
\end{equation}
This immediately tells us that
\[
\sup_{B_{\frac {r_0}4}(0)}d_{i+1}({^{i+1}u},{^1{\widetilde M}})\leq2\theta^{-1}(c_1\epsilon_1/r_0+c_2\theta^{1+\beta})D_i.
\]
Setting $M_{i+1}:={^1{\widetilde M}}$ completes the inductive step, so long as
\[
2\theta^{-1}(c_1\epsilon_1/r_0+c_2\theta^{1+\beta})\leq\frac{1}{2}.
\]
To choose $\theta, \epsilon_1$ appropriately, first observe that the constants $c_1,c_2$ depend on the domain (ie its dimension and metric---in particular, how far it is from Euclidean) and on the total energy of the map $^iu$.  The dimension is constant, the metric converges to Euclidean as $i\to\infty$, and the bound on the energy of $u$ implies that we have uniform energy bounds on the ${^iu}$.  Hence, these constants do not depend on $i$, or (crucially) upon $\theta,\epsilon_1$.

We first choose $\theta\in(0, 1/4]$ so that $2c_2\theta^\beta\leq\frac{1}{4}$, then decrease $\epsilon_1$ if necessary so that $2c_1\theta^{-1}\epsilon_1\leq\frac{r_0}4$. Since, at the outset, we chose $\delta_0$ such that $2(1+4\theta^{-1})\delta_0< \eta$ and at each step of the induction argument, $\sup_{B_{\frac{r_0}4}(0)}d({^iu},{^iL}_0)\leq 2(1+4\theta^{-1})\delta_0$, this bound on $\delta_0$ 
 ensures that the inductive argument works at all stages.  In particular, at each stage, we have $\mu_0\{x\in B_{\frac {r_0}{4}}(0):{^i}u(x)\notin P_{F'}\}<\epsilon_1$.

Finally, we show that $u(B_{\frac{r_0}{4}}(0))\subset P_{F'}$. Suppose, for the sake of contradiction, that there is some point $y\in B_{\frac{r_0}{4}}(0)$ so that $u(y)\notin P_{F'}$.  Recall that the open set $U=\{x\in B_{\frac{r_0}{4}}(0):u(x)\notin P_{F'}\}$ has $\mu_0(U)<\epsilon_1<\mu_0(B_{\frac{r_0}{4}}(0))$.  Hence we may in fact choose $y \in B_{\frac{r_0}{4}}(0)$ so that for some $0<r<\frac{r_0}{4}$, $B_r(y)\subseteq U$ and there exists $x_0 \in u^{-1}(P_{F'}) \cap \partial B_r(y)$. We observe that, in the limit as $\sigma\to0$, at least half of the ball $B_\sigma(x_0)$ lies in $U$.

But on the other hand, by the above inductive argument, at the scale $\theta^i$, we have that $d_i(^iu,{^iL}_0)$ is small enough that
\[
\mu_0\{x\in B_{\frac{r_0}{4}}(0):{^iu}(x)\notin P_{F'}\}<\epsilon_1
\]
and in particular for $i$ large enough no more than $\frac{1}{4}$ of the ball $B_{\frac{r_0}{4\theta^i}}(x_0)$ lies in $U$.  This contradiction allows us to conclude that there are no points of $U$ in $B_{\frac{r_0}4}(0)$. That is, $u(B_{\frac{r_0}4}(0))\subset P_{F'}$.
\end{proof}

We are now in a position to state two key ingredients in the proofs of the main theorems, namely items (1) and (2) below which provide a gap theorem for harmonic maps into all Euclidean buildings and demonstrate that such maps have a local product structure at order 1 points. The theorem mirrors that of \cite[Theorem 6.3]{gromov-schoen} and the proof is quite similar, though 
our proof applies Theorem~\ref{GS Theorem 5.1} in place of \cite[Theorem 5.1]{gromov-schoen}.
  Because of the similarities, we defer the proof to Appendix \ref{sec:essreg}.

\begin{theorem}\label{thm:ordgap}
Let $X$ be a Euclidean building of type $W$.  Then we have the following:
\begin{enumerate}
\item There is a constant $\epsilon$ depending on $n,g$ and $W$ 
such that for a harmonic map $u:(\Omega^n,g)\to X$ and any $p\in\Omega$, either $\text{Ord}^u(p)=1$ or $\text{Ord}^u(p)\geq1+\epsilon$.
\item When $\text{Ord}^u(p)=1$, there exists a subbuilding $\PF\simeq \FY$ and an $r >0$ such that $F\simeq\R^m$ for some $m \in \{1, \dots, \min\{n,N\}\}$, $Y=Y^{N-m}$ is a building of lower dimension, and $u|B_r(p)$ decomposes into two harmonic maps where $u|{B_r(p)}=(u_1,u_2):B_r(p)\to \PF$, such that $u_1:B_r(p) \to \R^m$ is a harmonic map of rank $m$ and $u_2:B_r(p) \to Y$ satisfies either $\text{Ord}^{u_2}(p)\geq 1+\epsilon$ or $u_2$ is a constant map.
\item $X$ is essentially regular. 
\end{enumerate}
\end{theorem}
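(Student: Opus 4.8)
The plan is to prove items (1), (2) and (3) together by induction on the dimension $N$ of $X$, following the scheme of \cite[Theorem 6.3]{gromov-schoen} but substituting Theorem~\ref{GS Theorem 5.1} --- together with its inputs Proposition~\ref{LinearApprox} and Proposition~\ref{Sun Lemma 12 Collapsed} --- for \cite[Theorem 5.1]{gromov-schoen}. When $N=0$ there is nothing to prove, and when $N=1$ the building is an $\R$-tree, for which all three items are due to Sun~\cite{sun}; this is the base case. So fix $N\ge 2$, assume (1), (2) and (3) for every Euclidean building of dimension $<N$, and let $X$ have dimension $N$.

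First I would prove (2). Let $u$ be harmonic with $\text{Ord}^u(p)=1$. Proposition~\ref{LinearApprox} furnishes blow up maps $u_k=u_{\sigma_k}$ at $p$ and homogeneous degree $1$ harmonic maps $L_k\colon\R^n\to A_k\subset X_k$ such that each $(X_k,A_k,L_k)$ is an $(X_\omega,A_\omega,L_\omega)$-triple, with $\sup_{B_{r_0}(0)}d_k(u_k,L_k)\to 0$ and $\|g_k-\deltae\|_{C^2}\to 0$. Fixing a large $k$ and setting $F:=L_k(\R^n)$, $m:=\dim F$, the nonconstancy of $L_k$ gives $1\le m\le\min\{n,N\}$, and Lemma~\ref{Xnot} presents $P_F$ as a metric product $\R^m\times Y$ with $\dim Y=N-m<N$. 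Since the $\R^m$-factor is flat, essential regularity of $P_F$ reduces to essential regularity of $Y$, which holds by the inductive hypothesis (3); hence Theorem~\ref{GS Theorem 5.1} applies to $u$ and $L_k$ for $k$ large and yields $u_k(B_{r_0/4}(0))\subset P_F$. Rescaling back, $u$ maps a ball $B_r(p)$ into a subbuilding isometric to $\R^m\times Y$, so $u|_{B_r(p)}=(u_1,u_2)$ with $u_1\colon B_r(p)\to\R^m$ and $u_2\colon B_r(p)\to Y$ harmonic. The tangent map of $u$ at $p$ has image the flat $F$, which is the $\R^m$-factor of $P_F$ over a single point of $Y$; hence the tangent map of $u_1$ is a rank-$m$ linear map (so $u_1$ has rank $m$), while the tangent map of $u_2$ is constant, forcing $\text{Ord}^{u_2}(p)>1$ or $u_2$ constant near $p$. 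The inductive gap (1) for the lower-dimensional building $Y$ upgrades the former to $\text{Ord}^{u_2}(p)\ge 1+\epsilon$, completing (2).

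Next I would prove (1), then (3). For (1), suppose $\text{Ord}^u(p)=\alpha>1$ and let $u_\omega$ be the tangent map at $p$: by Remark~\ref{KLTheorem5.1} it is homogeneous of degree $\alpha$ into a Euclidean building $X_\omega$ of type $W$ of dimension $\le N$. If $\dim X_\omega<N$, the inductive gap for $X_\omega$ gives $\alpha\ge 1+\epsilon$ immediately. If $\dim X_\omega=N$, one picks $q\ne 0$ with $\text{Ord}^{u_\omega}(q)<\alpha$ --- such a $q$ exists, since otherwise $u_\omega$ would be invariant under translation in every direction, hence constant --- and performs a dimension reduction on $u_\omega$ near $q$ (applying item (2) when $\text{Ord}^{u_\omega}(q)=1$ to split off a flat factor, and iterating otherwise), arriving at a homogeneous harmonic map into a building of dimension $<N$ to which the inductive gap applies; this yields $\epsilon=\epsilon(n,g,W)>0$. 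For (3), given harmonic $u\colon(\Omega,g)\to X$ and $x\in K$: if $\text{Ord}^u(x)=1$, split $u$ near $x$ by (2) as $(u_1,u_2)\colon\to\R^m\times Y$ and iterate the remainder estimate over dyadic scales --- the smooth factor $u_1$ has remainder $\lesssim\sigma^2$ by Taylor expansion, the factor $u_2$ has remainder governed by the inductive essential regularity of $Y$, and the remainder of the pair is comparable to the larger of the two --- which gives $R^u(x,\sigma)\le c\,\sigma^{1+\beta}R^u(x,\sigma_0)$. If instead $\text{Ord}^u(x)>1$, the best homogeneous degree $1$ competitor is the constant $u(x)$, so $R^u(x,\sigma)\asymp\sup_{B_\sigma(x)}d(u,u(x))\lesssim\sigma^{\text{Ord}^u(x)}$, which is $\lesssim\sigma^{1+\epsilon}$ by (1); taking $\beta\le\epsilon$ again yields the claimed decay.

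The main obstacle is keeping this web of implications consistent --- (2) at level $N$ uses (1) and (3) at levels $<N$; (1) at level $N$ uses (1) at levels $<N$ and (2) at level $N$; (3) at level $N$ uses (1), (2) at level $N$ and (3) at levels $<N$ --- and, crucially, ensuring that none of the constants degrades along the induction: the gap $\epsilon$, the data $(c,\beta,\sigma_0)$ of Definition~\ref{def:aomegaw}, and the closeness thresholds $\eta$ and $\delta_0$ of Proposition~\ref{Sun Lemma 12 Collapsed} and Theorem~\ref{GS Theorem 5.1} must all be shown to depend only on $n$, $g$ and $W$ and to survive passage to the lower-dimensional factor $Y$. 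Pinning down these dependencies --- in particular checking that essential regularity of $P_F\simeq\R^m\times Y$ genuinely reduces to the already-established lower-dimensional case with uniform constants --- is exactly what forces the appendix to revisit the relevant estimates of \cite{gromov-schoen} and \cite{sun} and to record how their constants depend on the data.
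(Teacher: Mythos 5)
Your overall framework---induction on the dimension of the building, with Theorem~\ref{GS Theorem 5.1} (fed by Proposition~\ref{LinearApprox} and Proposition~\ref{Sun Lemma 12 Collapsed}) substituted for \cite[Theorem~5.1]{gromov-schoen}---is the paper's. But the argument diverges at several points, and two of the divergences are genuine gaps.

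\emph{Item (2).} The inference ``the tangent map of $u_2$ is constant, forcing $\text{Ord}^{u_2}(p)>1$ or $u_2$ constant'' is not justified. The decomposition $u|_{B_\rho(p)}=(\bar u_1,\bar u_2)$ is taken with respect to a \emph{fixed} $k$ and its subbuilding $P_{F_k}\simeq F_k\times Y_k$, while the flat $F_\omega=\mathrm{image}(u_\omega)$ lives in the ultralimit $X_\omega$ and is built from the whole sequence of charts; there is no a priori reason $F_\omega$ should be parallel to the $\R^m$-factor $T_{u(p)}F_k$ of $\wlim(P_{F_k},d_j)$, so the $Y_k$-component of the blow-ups of $\bar u_2$ need not converge to a constant. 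The possibility $\text{Ord}^{\bar u_2}(p)=1$ therefore survives, and the paper handles it explicitly by applying the inductive hypothesis \emph{again} inside $Y_k$ to peel off another flat factor, producing $u|_{B_r(p)}=(u_1,u_2)$ with $u_1:B_r(p)\to\R^{m+j}$ of full rank and $u_2:B_r(p)\to Z$, $\dim Z<N-m$, with $\text{Ord}^{u_2}(p)\ge 1+\epsilon$ or constant. You need this iteration; without it, item (2) as stated is not proved.

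\emph{Item (1).} Here your route is genuinely different, and I do not think it closes. You take one map with $\text{Ord}^u(p)=\alpha>1$ and try to bound $\alpha$ from below by a Federer--Almgren--type dimension reduction of the tangent map near a point $q\ne0$ of smaller order. But the order at $q$ and the order at $0$ are not related in a way that hands you a quantitative lower bound for $\alpha$, nor is the constant you would extract visibly uniform in $u$ (it must depend only on $n,g,W$). The paper instead argues by contradiction/compactness: given $u_k$ with $\text{Ord}^{u_k}(p)=\alpha_k\to1$, $\alpha_k\ne1$, one replaces each $u_k$ by its rescaled homogeneous tangent map, takes the ultralimit $u_\omega$ (degree $1$), splits $u_k=(u_{k,1},u_{k,2})$ into $F_k\times Y_k$ via Theorem~\ref{GS Theorem 5.1}, and invokes the \emph{mediant lemma} (Lemma~\ref{lem:ordineq}: the order of a product map lies between the orders of the factors). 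Since $\alpha_k\ne1$ this forces both factors' orders to exceed $1$, hence $\text{Ord}^{u_{k,1}}(0)\ge2$ (Euclidean target) and $\text{Ord}^{u_{k,2}}(0)\ge1+\epsilon_{W_k}$ by induction, contradicting $\alpha_k\to1$ after taking the minimum over the finitely many possible $W_k$. The mediant lemma and the compactness framing are the missing ingredients. Relatedly, your dependency graph has (1) at level $N$ using (2) at level $N$; the paper proves (1) at level $N$ before and independently of (2) at level $N$.

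\emph{Item (3).} Your direct iteration faces a uniformity issue: the radius on which (2) yields a splitting, the constants of lower-dimensional essential regularity for the particular $Y$ that arises, and the constant in ``$\sup_{B_\sigma(x)}d(u,u(x))\lesssim\sigma^{\text{Ord}^u(x)}$'' all depend a priori on $u$ and $x$, whereas $K$-intrinsic differentiability demands $\sigma_0,c,\beta$ depending only on $K,\Omega,W$. The paper again argues by contradiction (as in Proposition~\ref{prop:tessreg}): rescale a putative sequence of counterexamples $v_k$ so that $\sup_{B_1}d_k'(v_k,v_k(0))=1$, pass to the ultralimit $v_\omega$, and split into the two cases $\text{Ord}^{v_\omega}(0)>1$ (apply (1)) and $\text{Ord}^{v_\omega}(0)=1$ (apply (2), then inductive essential regularity of $\R^m\times Y$). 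It is this compactness argument that fixes the constants uniformly.
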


\section{Proof of the main theorems}
\label{sec:proofsofrigdity}

Because of the analysis provided in the previous sections, the proofs of the main theorems of this paper (stated in the introduction) follow from adapting the proofs in \cite{gromov-schoen} for locally finite buildings. In this section, we provide the necessary adjustment to their arguments.

\subsection{Proof of Theorem~\ref{maintechnical}}
\begin{definition}\label{def:singular}
Let $u: \Omega \rightarrow X$  be a harmonic map from a Riemannian domain into a Euclidean building of type $W$.  A point $p \in \Omega$ is called a {\bf regular point} if there exists a neighborhood $U$ of $p$ and an apartment $A$ of $X$ such that $u(U) \subset A$.  Otherwise $p$ is called a {\bf singular point}.  The {\bf singular set} $\mathcal S(u)$  is  the set of all singular points.  
\end{definition}

Define
\begin{eqnarray*}
{\mathcal S}_{>1}(u) & = & \{p \in \mathcal S(u):  Ord^u(p) >1\}
\\
{\mathcal S}_{=1}(u) & = & \{p \in \mathcal S(u):  Ord^u(p)=1\}
\\
{\mathcal S}_0(u)  & = &  \{p \in \Omega:  Ord^u(p) >1\}
\end{eqnarray*}
In particular,
\[
\mathcal S(u)=\mathcal S_{=1}(u) \cup \mathcal S_{>1}(u) \ \ \mbox{ and } \ \ \mathcal S_{>1}(u) \subset \mathcal S_0(u).
\]
\begin{lemma} \label{higherorderpoints}
If $n=\dim \Omega$, then {${\mathcal S}_0(u)$ is a closed set with}
$
\dim_{\mathcal H}({\mathcal S}_0(u)) \leq n-2.
$
\end{lemma}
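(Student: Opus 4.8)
The statement is the standard ``higher-order set has codimension 2'' result, and the plan is to follow the Federer-type dimension reduction argument of \cite[Section 6]{gromov-schoen}, adapted so that it works for non-locally-finite targets by substituting Theorem~\ref{thm:ordgap} and the blow-up/ultralimit machinery of Section~\ref{subsec:rescaling}--\ref{sec:ultra} for the locally finite ingredients. First I would record that $\mathcal S_0(u)$ is closed: this is immediate from upper semicontinuity of $p\mapsto\text{Ord}^u(p)$ (noted right after the definition of the order) together with the gap Theorem~\ref{thm:ordgap}(1), since $\{Ord^u(p)>1\}=\{Ord^u(p)\geq 1+\epsilon\}$ and the latter is closed by upper semicontinuity. (Here one also uses that the order function is finite and locally bounded, which follows from the monotonicity of $\text{Ord}^u(p,\sigma)$ and a compactness argument; this is routine.)

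For the Hausdorff dimension bound I would argue by contradiction via dimension reduction. Suppose $\dim_{\mathcal H}(\mathcal S_0(u))>n-2$, so that for some $s>n-2$ the $s$-dimensional Hausdorff measure $\mathcal H^s(\mathcal S_0(u))$ is positive. Using a standard density-point argument for Hausdorff measure (upper $s$-density bounded below on a positive-measure subset), pick $p_0\in\mathcal S_0(u)$ where $\mathcal S_0(u)$ has positive upper $s$-density, and form the blow-up sequence $u_k=u_{\sigma_k}$ at $p_0$ for a suitable $\sigma_k\to0$. By Section~\ref{subsec:rescaling} and Remark~\ref{KLTheorem5.1}, after passing to a subsequence $u_k$ converges locally uniformly in the pullback sense to a tangent map $u_\omega:B_1(0)\to X_\omega$, which is a nonconstant homogeneous degree $\alpha$ harmonic map with $\alpha=\text{Ord}^u(p_0)>1$, and $X_\omega$ is again a Euclidean building of type $W$. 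One then shows that the blow-up of the set $\mathcal S_0(u)$ at $p_0$ along $\sigma_k$ is contained in $\mathcal S_0(u_\omega)$ (here one uses: order is upper semicontinuous under this kind of convergence, equivalently $\text{Ord}^{u_k}\to$ something $\geq$ the limiting order on limit points, a fact that follows by modifying \cite[Section 2]{gromov-schoen} exactly as invoked in the definition of the order function) and that $\mathcal H^s$ of this blow-up set is still positive, so $\mathcal H^s(\mathcal S_0(u_\omega))>0$. Homogeneity of $u_\omega$ forces $\mathcal S_0(u_\omega)$ to be a cone, so its ``link'' in $S^{n-1}$ has positive $\mathcal H^{s-1}$-measure; iterating, one reduces to a tangent map defined on $\R^{n-\ell}$ whose higher-order set is a single point, while $s-\ell>0$, a contradiction once $\ell$ is large enough. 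This is the classical Almgren--Federer reduction and requires only that (a) blow-ups of tangent maps of higher-order points are again nonconstant homogeneous harmonic maps of order $>1$ into Euclidean buildings of type $W$, and (b) order is semicontinuous under pullback convergence; both are available from the excerpt.

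The key structural input beyond the reduction scheme is Theorem~\ref{thm:ordgap}, particularly parts (1) and (2): part (1) guarantees $\mathcal S_0(u)=\{Ord\geq 1+\epsilon\}$ is closed and that the order does not accumulate at $1$ from above, which is what makes the stratification argument terminate; part (2)/(3) (local product structure and essential regularity) is what one would invoke at order-$1$ points, but for $\mathcal S_0(u)$ itself — the strictly-higher-order set — one only needs that a tangent map at a higher-order point is homogeneous of degree $>1$ and that such a map cannot exist on $\R^1$ (a homogeneous harmonic map $\R\to X$ of degree $\alpha\neq 1$ into a CAT(0) space is constant, since a nonconstant such map restricted to a ray is a geodesic, forcing $\alpha=1$). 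So the base case of the induction is $n=2$: if $p_0$ were an isolated-but-positive-measure situation in dimension $2$, the link argument would produce a homogeneous map on $\R^1$ of degree $>1$, impossible; hence in dimension $2$, $\mathcal S_0(u)$ has $\mathcal H^s$-measure zero for all $s>0$, i.e. it is at most $0$-dimensional.

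\textbf{Main obstacle.} The substantive technical point — and the one I expect to require the most care — is justifying the semicontinuity of the order function under convergence in the pullback sense / ultralimits, so that the blow-up of a higher-order point is genuinely a higher-order point of the tangent map and the Hausdorff measure of the singular set does not drop in the blow-up limit. In the locally finite setting of \cite{gromov-schoen} this is handled by Arzelà--Ascoli into a fixed cone; here one must instead track the monotone quantity $\text{Ord}^{u_k}(p,\sigma)$ and pass to the limit using the weak convergence of energy and $I^u$-type boundary integrals (guaranteed by \cite[Theorem 3.11]{korevaar-schoen2} as cited in Section~\ref{subsec:rescaling}), being careful that $I^{u_\omega}(\sigma)\neq 0$ so that the limiting order is well defined and lower semicontinuous in the right direction. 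Everything else is bookkeeping in the Federer dimension-reduction framework.
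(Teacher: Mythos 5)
Your proposal follows essentially the same route as the paper's proof: closedness via upper semicontinuity of the order function together with the gap Theorem~\ref{thm:ordgap}(1), and the dimension bound via the Gromov--Schoen/Federer dimension reduction, with the one non-locally-finite modification being precisely the semicontinuity of the order under convergence in the pullback sense (i.e., if $x_k\to x$ with $x_k\in\mathcal S_0(u_k)$, then $\limsup_k\text{Ord}^{u_k}(x_k)\le\text{Ord}^{u_\omega}(x)$, which together with the gap forces $x\in\mathcal S_0(u_\omega)$), exactly the analogue of \cite[Lemma 6.5]{gromov-schoen} that the paper isolates. The only thing worth tidying is your slightly garbled sentence about the direction of the semicontinuity (``$\text{Ord}^{u_k}\to$ something $\ge$ the limiting order''): the inequality you actually need, and the one the paper states, is $\limsup_k\text{Ord}^{u_k}(x_k)\le\text{Ord}^{u_\omega}(x)$, which together with the uniform lower bound $\text{Ord}^{u_k}(x_k)\ge 1+\epsilon$ from the gap theorem is what lets higher-order points survive the blow-up.
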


\begin{proof}
This follows from a slight modification of the proof in \cite[paragraph after Lemma 6.5]{gromov-schoen}. The main modification to the argument is in fact in the invoking of the conclusion of \cite[Lemma 6.5]{gromov-schoen}. The conclusion of \cite[Lemma 6.5]{gromov-schoen} holds in this setting for the (Gromov-Schoen) blow up maps $u_k$ and $u_\omega~:~=~\wlim u_k$ since the local uniform convergence in the pullback sense given by Remark \ref{KLTheorem5.1} and properties of the order function imply that for $x_k \to x$, where $x_k \in \mathcal S_0(u_k)$, $\limsup_{k \to \infty} \text{Ord}^{u_k}(x_k) \leq \text{Ord}^{u_\omega}(x)$ and thus by Theorem \ref{thm:ordgap} item (1), $x \in \mathcal S_0(u_\omega)$. 
\end{proof}

\begin{proof1}
This is an easy consequence of Theorem~\ref{thm:ordgap}.  For the sake of completeness, we will include the proof which  involves  an inductive argument on the dimension of $X$.  The one dimensional case was proved in  \cite{sun}.  Now assume that the assertion is true for dimensions less than $N$.  

Suppose $p \in \mathcal S_{=1}(u)$. Then Theorem~\ref{thm:ordgap} item (2) asserts that there exists an $r>0$ and a lower dimensional subbuilding $(Y,d)$ such that for $\sigma \in (0,r]$, $u_\sigma=(u_{\sigma, 1}, u_{\sigma, 2})$ where  $u_{\sigma, 1}:B_1(0) \to \R^m$ and $u_{\sigma, 2}: B_1(0) \to (Y,d_\sigma)$.  Thus, by the inductive hypothesis, there exists a $\sigma_p>0$ such that
$$
\dim_{\mathcal H}(\mathcal S(u) \cap B_{\sigma_p}(p))=\dim_{\mathcal H}(\mathcal S(u_{\sigma_p})) = \dim_{\mathcal H}(\mathcal S(u_{\sigma_p, 2})) \leq n-2.
$$
 Now cover $\mathcal S_{=1}(u)$ by balls $\{B_{\sigma_p}(p)\}_{p \in \mathcal S_{=1}(u)}$ and refine this cover to a countable subcover $\{B_{\sigma_{p_j}}(p_j)\}$. Since for all $t>n-2$, $\mathcal H^t(B_{\sigma_{p_j}}(p_j))= 0$ and $\mathcal H^t$ is countably additive,  $\mathcal H^t(\mathcal S_{=1}(u))=0$  for all $t>n-2$. Thus, $\dim_{\mathcal H}(\mathcal S_{=1}(u)) \leq n-2$.  Combined with Lemma~\ref{higherorderpoints}, we conclude {that $\mathcal S(u)$ is a closed set with} $\dim_{\mathcal H}(\mathcal S(u)) \leq n-2$.
\end{proof1}

The following corollary of Theorem~\ref{maintechnical} will be important in the proof of the rigidity theorems.

\begin{corollary} \label{thm:maintech'}
For any compact subdomain $\Omega_1$ of $\Omega$, there is a sequence of Lipschitz functions $\{\psi_i\}$ such that  $\psi_i\equiv 0$ in a neighborhood of $\mathcal S(u) \cap \overline{\Omega_1}$, $0 \leq \psi_i \leq 1$, $\psi_i(x) \rightarrow 1$ for all $x \in \Omega_1 \backslash \mathcal S_1$, and  
\[
\lim_{i \rightarrow \infty} \int_\Omega |\nabla du| |\nabla \psi_i| \, d\mu =0.
\]
\end{corollary}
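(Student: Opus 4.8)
The goal is to construct a sequence of Lipschitz cutoff functions that vanish near the singular set, converge to $1$ pointwise off the singular set, and for which the ``energy'' $\int_\Omega |\nabla du|\,|\nabla \psi_i|\,d\mu$ of the cutoffs tends to zero. The key input is Theorem~\ref{maintechnical}, which guarantees $\mathcal S(u)$ is closed with $\dim_{\mathcal H}(\mathcal S(u)) \le n-2$; in particular, $\mathcal S(u)$ has vanishing $(n-1)$-dimensional Hausdorff measure, which is exactly the codimension condition needed to build cutoffs with small gradient $L^2$-norm. The subtlety, as the term $|\nabla du|$ suggests, is that we are not merely bounding $\int |\nabla\psi_i|^2$ but the pairing of $|\nabla\psi_i|$ against $|\nabla du|$, so we need enough integrability of $|\nabla du|$ near $\mathcal S(u)$ — which comes from the interior Lipschitz regularity of $u$ on the complement of the singular set (so $|\nabla du|\in L^1_{loc}(\Omega\setminus\mathcal S(u))$, or more precisely, one works on the good region where $u$ maps locally into an apartment and $du$ is bounded).

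First I would fix the compact subdomain $\Omega_1 \subset\subset \Omega$ and set $S := \mathcal S(u)\cap\overline{\Omega_1}$, which is compact. Since $\dim_{\mathcal H}(S) \le n-2 < n-1$, for each $\delta>0$ there is a finite covering of $S$ by balls $\{B_{r_j}(x_j)\}_{j=1}^{N_\delta}$ with $\sum_j r_j^{\,n-2} < \delta$ (and each $r_j < \delta$). The standard construction: on each such ball, take a logarithmic cutoff $\phi_j$ which is $0$ on $B_{r_j}(x_j)$, equal to $1$ outside $B_{\sqrt{r_j}}(x_j)$ (or outside $B_{2r_j}$ with a linear profile — I will use whichever profile gives the cleanest estimate), Lipschitz in between, and with $\int |\nabla\phi_j|^{n} \,d\mu \lesssim (\log(1/r_j))^{1-n}$, or with the simpler linear profile $\int|\nabla\phi_j|^{n-1}\lesssim r_j^{\,n-2}$ suitably, depending on what the companion factor requires. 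Then set $\psi_i := \min_j \phi_j$ (taking $\delta = \delta_i \to 0$ along a sequence). This $\psi_i$ is Lipschitz, identically $0$ in a neighborhood of $S$, satisfies $0\le\psi_i\le1$, and $\psi_i \to 1$ pointwise on $\Omega_1\setminus\mathcal S(u)$ since every point off $S$ eventually lies outside all the shrinking covering balls. Extend each $\psi_i$ by $1$ outside a fixed neighborhood of $\Omega_1$ and multiply by a fixed cutoff supported in $\Omega$ if necessary to keep things compactly supported.

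Next I would estimate the integral. Write $\int_\Omega |\nabla du|\,|\nabla\psi_i|\,d\mu$. The gradient $\nabla\psi_i$ is supported in the union $A_i := \bigcup_j (B_{R_j}(x_j)\setminus B_{r_j}(x_j))$ (the annular regions), a neighborhood of $S$ whose measure $\to 0$ as $i\to\infty$. Away from $\mathcal S(u)$, by Theorem~\ref{thm:ordgap} and the local product/regularity structure, $u$ is locally (essentially) smooth and $du$ is bounded on compact subsets of $\Omega\setminus\mathcal S(u)$; however $A_i$ shrinks to $S \subset \mathcal S(u)$, so we must be careful. The clean route: apply Cauchy--Schwarz, $\int_\Omega |\nabla du|\,|\nabla\psi_i|\,d\mu \le \big(\int_{A_i}|\nabla du|^2\,d\mu\big)^{1/2}\big(\int_\Omega|\nabla\psi_i|^2\,d\mu\big)^{1/2}$. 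The second factor is controlled by the covering sum: with the logarithmic profile and $\sum r_j^{\,n-2}<\delta_i$ one gets $\int|\nabla\psi_i|^2 \lesssim \delta_i \to 0$ when $n\geq 2$ — this is exactly the codimension-$2$ capacity estimate. For the first factor, I would note that $du$ (more precisely, the relevant second-derivative/curvature quantity $|\nabla du|$) is $L^2_{loc}$ on $\overline{\Omega_1}$ in an appropriate sense — here one invokes that $u$ restricted away from $\mathcal S(u)$ solves the harmonic map system into a smooth manifold (an apartment) and the interior estimates of \cite{gromov-schoen}/\cite{korevaar-schoen1}, together with the fact that $\mathcal S(u)$ has measure zero, to conclude $|\nabla du| \in L^2(\overline{\Omega_1})$; then $\int_{A_i}|\nabla du|^2 \to 0$ by dominated convergence since $|A_i|\to 0$. (If $|\nabla du|\in L^2$ near $S$ is not directly available, one instead keeps the annuli away from $S$ at a definite scale per stage, combined with the fact that $du$ is bounded on each fixed compact piece of $\Omega\setminus\mathcal S(u)$, and absorbs the remaining thin sliver using the measure estimate — this is a standard but slightly fiddly bookkeeping argument.)

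The main obstacle I anticipate is precisely this interaction between the two factors: one needs the cutoff's gradient to be small in the norm \emph{dual} to whatever integrability $|\nabla du|$ genuinely has near the singular set. Because $\mathcal S(u)$ has codimension $2$, the $W^{1,2}$-capacity of $\mathcal S(u)$ is zero, so cutoffs with $\|\nabla\psi_i\|_{L^2}\to 0$ vanishing near $\mathcal S(u)$ do exist — that part is robust. The delicate point is confirming $|\nabla du| \in L^2_{loc}$ (or obtaining an adequate substitute) up to and across the singular set, so that $\int_{A_i}|\nabla du|^2\to 0$; this requires combining the local product decomposition at order-$1$ points (Theorem~\ref{thm:ordgap}(2)) with the interior a priori estimates for genuinely smooth harmonic maps into apartments, and handling the higher-order singular stratum $\mathcal S_0(u)$ (also codimension $2$) separately, likely by the same covering device applied to $\mathcal S_0(u)$ directly.
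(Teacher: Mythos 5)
Your proposal correctly identifies that the main obstacle is controlling $|\nabla du|$ near the singular set, but the Cauchy--Schwarz route you lean on does not go through, and the fallback you sketch does not repair it. The a priori interior estimate for the harmonic map restricted to the regular set (where it maps into an apartment, so the component functions are honest harmonic functions) gives only $|\nabla du|(x) \lesssim \mathrm{dist}(x,\mathcal S(u))^{-1}\cdot|\nabla u|_{\infty}$, and $\mathrm{dist}(\cdot,\mathcal S(u))^{-2}$ is \emph{not} integrable over a tubular neighborhood of a codimension-$2$ set --- the integral diverges logarithmically. So the claim that $|\nabla du| \in L^2(\overline{\Omega_1})$ ``by interior estimates plus measure zero'' is not justified and is in fact generically false; Cauchy--Schwarz against an $L^2$-small $\nabla\psi_i$ therefore cannot close the argument. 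Your alternative (``keep the annuli at a definite scale and absorb the thin sliver by the measure estimate'') fails for the same reason: $|\nabla du|$ is not bounded on any set whose closure touches $\mathcal S(u)$, so shrinking the measure of the annular region does not by itself make the product integral small.

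The missing ingredient is the order gap (Theorem~\ref{thm:ordgap}(1)) and the local product structure (Theorem~\ref{thm:ordgap}(2)), which is exactly what the paper invokes when it points to the argument on p.~227 of Gromov--Schoen. The gap theorem gives a quantitative decay: near the higher-order stratum $\mathcal S_0(u)$ one has $\mathrm{Ord}^u \geq 1+\epsilon$ and hence $|\nabla u|(x) \lesssim \mathrm{dist}(x,\mathcal S_0(u))^{\epsilon}$, which upgrades the Hessian bound to $|\nabla du|(x)\lesssim \mathrm{dist}(x,\mathcal S_0(u))^{\epsilon-1}$. Pairing this with the codimension-$2$ covering and a cutoff whose gradient is $O(\mathrm{dist}^{-1})$ produces an integrand $\lesssim \mathrm{dist}^{\epsilon-2}$, whose integral over a codimension-$2$ annular region scales like $R_j^{\epsilon}$ and genuinely tends to zero --- no $L^2$ control of $|\nabla du|$ is needed or used. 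Near the order-$1$ singular stratum one instead uses the splitting $u=(u_1,u_2)$ from Theorem~\ref{thm:ordgap}(2): $u_1$ is a smooth harmonic map into $\R^m$ (so $|\nabla du_1|$ is bounded there) and $u_2$ maps into a lower-dimensional building, and one inducts on the dimension $N$ of the building target. Your proposal gestures at treating $\mathcal S_0(u)$ ``by the same covering device'' but never brings in the $\epsilon$-decay, which is the actual content of the step; without it the estimate does not close.
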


\begin{proof}
The proof follows from \cite[p.~227, third paragraph]{gromov-schoen}, adapted to non-locally finite case using Theorem~\ref{thm:ordgap} and Theorem~\ref{maintechnical}.
\end{proof}

\subsection{Proofs of Theorem~\ref{maintheorem} and Theorem~\ref{maintheorem'}}
First we recall the definition of Finite Rank (FR) spaces given in  \cite{korevaar-schoen3}.
\begin{definition}
A CAT(0) space $X$ is an {\bf FR-space} if there exists $\epsilon_0>0$ and $R_0>1$ such that any subset of $X$ with diameter $D>R_0$ is contained in a ball of radius $(1-\epsilon_0)D/\sqrt 2$.
\end{definition}
Since every Euclidean building has finite geometric dimension, \cite[Theorem 1.3]{caprace-lytchak} implies that every Euclidean building is an FR-space. 
Korevaar and Schoen prove an existence theorem for harmonic maps to FR-spaces.
\begin{theorem}[\cite{korevaar-schoen3} Theorem 1]
\label{thm:existenceKS3}
Let $\pi_1(M)$ be a fundamental group of a compact Riemannian manifold $M$ and $\rho$ be an isometric action of $\pi_1(M)$ on an FR-space $X$. Either there exists an invariant equivalence class of rays or there exists a $\rho$-equivariant harmonic map $u:\widetilde M \rightarrow X$.
\end{theorem}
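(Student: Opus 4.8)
The plan is to solve the $\rho$-equivariant energy minimization problem by the direct method, using the FR-condition in place of the local compactness of the target that is unavailable here.

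First I would set up the variational problem. For a $\rho$-equivariant map $u:\widetilde M\to X$ of finite (Korevaar--Schoen) energy, the energy density descends to $M$, so $E^u:=\int_M|\nabla u|^2$ is well defined; set $J:=\inf E^u$ over all $\rho$-equivariant finite-energy maps. This infimum is finite since such maps exist (build one from a triangulation of $M$, a partition of unity, and the NPC barycenter construction). Choose a minimizing sequence $u_i$, so $E^{u_i}\to J$. Fix a relatively compact fundamental domain $\mathcal D\Subset\widetilde M$ and a base point $p_0\in\mathcal D$, and put $q_i:=u_i(p_0)\in X$. The first ingredient is a uniform displacement estimate: the Poincar\'e-type inequality for finite-energy equivariant maps gives, for each $\gamma\in\Gamma=\pi_1(M)$, a constant $C_\gamma$ independent of $i$ with
\[
d\big(\rho(\gamma)q_i,\,q_i\big)=d\big(u_i(\gamma\cdot p_0),\,u_i(p_0)\big)\le C_\gamma\sqrt{E^{u_i}}\le C_\gamma\sqrt{J+1}.
\]

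Next I would run the dichotomy according to whether $\sup_i d(q_i,q_0)$ is finite. In the bounded case, after passing to a subsequence the images $u_i(\mathcal D)$ lie in a fixed bounded set, and the uniform energy bound yields uniform local $W^{1,2}$ bounds; the Korevaar--Schoen compactness theorem (convergence in the pullback sense as in Section~\ref{sec:limitspace}, but now with a fixed target, hence a genuine Rellich theorem) produces a subsequence converging in $L^2_{\mathrm{loc}}(\mathcal D,X)$ to a limit that, by equivariance of the $u_i$, extends to a $\rho$-equivariant finite-energy map $u_\infty$. Lower semicontinuity of energy under this convergence gives $E^{u_\infty}\le\liminf E^{u_i}=J$, so $E^{u_\infty}=J$: $u_\infty$ minimizes equivariant energy, and by the locality of the energy together with the Korevaar--Schoen solvability of the Dirichlet problem on small balls, a standard cut-and-paste argument shows $u_\infty$ is locally energy-minimizing, i.e. harmonic in the sense of Definition~\ref{mindef}. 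In the complementary case $d(q_i,q_0)\to\infty$, I would pass to a subsequence along which the unit-speed geodesic segments $\overline{q_0q_i}$ converge, first as geodesic germs at $q_0$ and then as rays, to a ray $\gamma_\infty$ from $q_0$; following Korevaar--Schoen, it is the FR-condition that drives this sub-convergence (one normalizes the minimizing sequence by iterated circumcenters of its images, and FR forces their centers to march to infinity along a geodesic ray). The displacement estimate above bounds $d(\rho(\gamma)q_i,q_i)$ uniformly in $i$, and since $X$ is CAT(0) this forces $\rho(\gamma)$ to move $\gamma_\infty$ within bounded Hausdorff distance for every $\gamma\in\Gamma$; hence the equivalence class $[\gamma_\infty]$ is a $\rho$-invariant class of rays (equivalently, $\rho(\Gamma)$ fixes a point at infinity).

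The main obstacle is the compactness step in the bounded case: with $X$ not locally compact one cannot invoke Arzel\`a--Ascoli, and the real content is that the FR-condition supplies enough uniform convexity at large scales that a bounded-energy, bounded-image sequence is precompact in $L^2$ --- dually, that the circumcenter normalization of a minimizing sequence either stabilizes (yielding the minimizer) or escapes to infinity along a single ray (yielding the invariant class). The remaining care lies in making the displacement estimate rigorous for $W^{1,2}$ rather than Lipschitz maps, via a Fubini argument over a foliation of a path from $p_0$ to $\gamma\cdot p_0$, and in checking that an equivariant energy minimizer is genuinely harmonic, which follows from the existence and uniqueness theory for energy minimizers on small balls.
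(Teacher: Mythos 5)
The paper does not prove this statement; it is quoted from the unpublished Korevaar--Schoen preprint [KS3], so there is no in-paper argument to compare yours against. Your outline --- direct method, displacement estimate, dichotomy on whether a minimizing sequence's base points escape to infinity --- has the right shape, but the two key steps are wrong or unjustified, and they are precisely the places where the FR hypothesis must do work.

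In the bounded case you assert that having a fixed target $X$ upgrades pullback convergence to ``a genuine Rellich theorem'' with limit in $X$. This is false: for a non-locally-compact target the Korevaar--Schoen pullback limit takes values in an abstractly completed CAT(0) space (Section~\ref{sec:limitspace}), and nothing forces that limit space into $X$. You then suggest FR supplies the missing precompactness; it cannot, since the FR inequality only constrains sets of diameter exceeding $R_0$ and says nothing about small-scale compactness (FR-spaces need not be proper). In fact FR is not required for the bounded case at all: embed $X$ into the ultralimit $X_\omega=\wlim(X,d,q_0)$ as the constant sequences, note $X\subset X_\omega$ is closed, convex, and $\rho_\omega$-invariant, and compose $u_\omega$ with the closest-point projection $\pi_X:X_\omega\to X$. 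Since $\pi_X$ is $1$-Lipschitz and commutes with the action, $\pi_X\circ u_\omega$ is a $\rho$-equivariant map into $X$ of energy at most $J$, hence a minimizer, and the usual equivariant replacement argument shows it is harmonic.

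Where FR genuinely enters is the unbounded case, and your mechanism there also does not hold up. You pass to a subconvergent subsequence of the geodesic germs of $\overline{q_0q_i}$, but $\Sigma_{q_0}X$ need not be compact when $X$ is not locally compact, so these germs need not subconverge, and FR does not repair this --- the FR condition is a global circumradius bound that says nothing about the space of directions at a point. The correct mechanism (cf.~\cite{caprace-lytchak}) is a statement about \emph{ideal boundaries}: in an FR-space, a filtering family of nonempty closed convex subsets with empty intersection has nonempty intersection of boundaries at infinity, with a canonical circumcenter; applied to the nested sublevel sets of the convex energy/displacement functional associated to the action, this produces the $\rho$-invariant class of rays when no minimizer exists. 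Your ``iterated circumcenters marching along a ray'' heuristic gestures toward this but is not a substitute for it.
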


\begin{proposition} \label{existence}
Let $\widetilde M$, $X$ and $\rho$ be as in Theorem~\ref{maintheorem}. Then there exists a $\rho$-equivariant harmonic map $u:\widetilde M \rightarrow X$.
\end{proposition}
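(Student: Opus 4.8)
The plan is to treat the cocompact and non-cocompact cases separately, using in both the fact (recorded just above) that every Euclidean building, having finite geometric dimension, is an FR-space by \cite[Theorem 1.3]{caprace-lytchak}, and using in both the hypothesis that $\rho(\Gamma)$ fixes no point at infinity to discard the degenerate alternative in the existence theory.

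Suppose first that $\Gamma$ is cocompact. After replacing $\Gamma$ by a torsion-free subgroup of finite index (Selberg's lemma), or equivalently working with the orbifold version of the existence theorem, $M:=\Gamma\backslash\widetilde M$ is a compact Riemannian manifold with $\pi_1(M)=\Gamma$, and I would apply Theorem~\ref{thm:existenceKS3} to the isometric action $\rho$ of $\pi_1(M)$ on the FR-space $X$. This produces either a $\rho$-equivariant harmonic map $u:\widetilde M\to X$ --- in which case we are done --- or a $\rho(\Gamma)$-invariant equivalence class of geodesic rays in $X$. The latter is precisely a fixed point of $\rho(\Gamma)$ in the boundary at infinity of $X$, which contradicts the hypothesis of Theorem~\ref{maintheorem}; hence the map exists. (If $\Gamma$ was first replaced by a finite-index subgroup $\Gamma'$, one recovers a genuinely $\rho$-equivariant map from the $\rho|_{\Gamma'}$-minimizer by a center-of-mass argument on the convex set of energy-minimizing $\rho|_{\Gamma'}$-equivariant maps.)

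If instead $\Gamma$ is not cocompact, then by the standing assumption $\widetilde M$ has rank $1$, and since $\widetilde M$ is neither $SO_0(p,1)/SO(p)\times SO(1)$ nor $SU_0(p,1)/S(U(p)\times U(1))$ it must be the quaternionic hyperbolic space $\mathbb{H}^{p}_{\mathbb{H}}$ ($p\ge 2$) or the Cayley hyperbolic plane $\mathbb{H}^{2}_{\mathbb{O}}$. Then $M=\Gamma\backslash\widetilde M$ is a complete finite-volume manifold of pinched negative curvature whose cusp cross-sections are infra-nilmanifolds with (at least) two-step nilpotent fundamental group. Here I would follow the finite-volume exhaustion argument of \cite{gromov-schoen} (see also \cite{corlette}): exhaust $M$ by compact submanifolds-with-boundary $M_1\subset M_2\subset\cdots$, solve the $\rho$-equivariant Dirichlet problem on each $\widetilde M_i$ to obtain harmonic maps $u_i$, and extract a subsequential limit using the interior Lipschitz estimates of \cite{korevaar-schoen1} together with convergence in the pullback sense. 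The main obstacle is the uniform energy bound, i.e.\ showing that the energy of $u_i$ contributed by each cusp end is uniformly small; this holds because an equivariant map into a CAT(0) space over a region whose fundamental group is nonabelian (two-step) nilpotent is forced to be nearly constant, which is exactly the Gromov--Schoen cusp estimate and uses only that $X$ is complete CAT(0) of finite geometric dimension, not that it is locally finite. The degenerate case in which the exhaustion limit escapes to infinity is, once more, excluded by the no-fixed-point-at-infinity hypothesis, and we obtain the required finite-energy $\rho$-equivariant harmonic map $u:\widetilde M\to X$.
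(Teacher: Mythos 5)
Your proposal is correct and essentially matches the paper's proof: the cocompact case is Theorem~\ref{thm:existenceKS3} applied directly, with the no-fixed-point-at-infinity hypothesis ruling out the invariant-ray alternative, and the finite-volume rank-1 case is handled by producing a finite-energy $\rho$-equivariant map via the Gromov--Schoen cusp estimate \cite[Lemma 8.1]{gromov-schoen} (whose proof, as you observe, does not use local finiteness of the target) and then invoking the existence theory. The paper states the latter step more tersely by replacing the compactness hypothesis of Theorem~\ref{thm:existenceKS3} with finite volume plus the existence of a finite-energy equivariant locally Lipschitz map, rather than spelling out the exhaustion by Dirichlet problems, but this is the same underlying argument.
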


\begin{proof}
If the rank of $\widetilde M$ is $\geq 2$, then the assertion follows from Theorem~\ref{thm:existenceKS3} and the assumption that $\rho$ does not fix a point at infinity (see also \cite[Corollary 3.6]{jost}).  Moreover, it is straightforward to verify that  we can replace the assumption that $M$ is compact  in Theorem~\ref{thm:existenceKS3} by the assumption that $M$ is of finite volume and there exists a $\rho$-equivariant finite energy locally Lipschitz map $\widetilde M \rightarrow X$.  
The existence of a finite energy locally Lipschitz map follows from \cite[Lemma 8.1]{gromov-schoen}, noting that the proof does not need the target building to be locally finite.  Thus, the assertion for the case when the rank of $\widetilde M$ is 1 also follows the same way.
\end{proof}

\begin{proof_of_main_theorem2}
The proof follows from applying the Bochner method to the map $u$ given by Proposition~\ref{existence}.  More specifically, for lattices in rank 1 groups we can  follow the proofs of  \cite[Theorems 7.2 and~7.4]{gromov-schoen}, using Theorem~\ref{maintechnical} and Corollary~\ref{thm:maintech'} in the appropriate places, to show that $u$ is constant.

For higher rank groups we follow the proof of \cite[Lemma 15]{daskal-meseGAFA} to show that $u$ is totally geodesic in a neighborhood of every regular point. Then, using Theorem \ref{maintechnical} and the arguments in the first part of the proof of \cite[Theorem 1]{daskal-meseGAFA}, we conclude that $u$ is totally geodesic. (Notice that part of their proof does not require any additional hypotheses on $\rho$.) In particular, this implies that $\text{Ord}^u(p)=1$.  By Theorem \ref{thm:ordgap},  there exists a flat $F$ and an $r>0$ such that $u|B_r(p)=(u_1,u_2):B_r(p) \to F \times Y$ where both $u_1$ and $u_2$ are totally geodesic and $\text{Ord}^{u_2}(p)>1$.  Thus, $u_2$ is a constant map which in turn implies $u$ is a smooth, totally geodesic harmonic map into $F  \subset A \simeq \R^N$ for some apartment $A \subset X$. It thus follows that $u$ must be a constant map.
\end{proof_of_main_theorem2}

\begin{proof_of_main_theorem3}
Using Proposition \ref{existence}, Theorem~\ref{maintechnical} and Corollary~\ref{thm:maintech'}, we can follow the proofs of  \cite[Theorems 7.2 and 7.3]{gromov-schoen}.
\end{proof_of_main_theorem3}

\appendix
\section*{Appendix}

\renewcommand{\thesection}{A} 
\section{Proof of Theorem \ref{thm:ordgap}}\label{app:proofof6}
In this section we prove Theorem \ref{thm:ordgap}. We begin with a few technical lemmas, then prove essential regularity for trees and finally, in Section \ref{sec:essreg} we prove the theorem.

The following lemma is useful for showing that complexes are essentially regular; the computations are standard but we include them for completeness.

\begin{lemma}\label{lem:intdifftheta}
Suppose that there is a $\tau_0>0$ and a $\theta\in(0,1/2]$ so that for any $u\in A_{\Omega,W}$, any $x_0\in K$, and any $\sigma\in(0,\tau_0]$, 
\[
R^u(x_0,\theta\sigma)\leq\frac\theta2R^u(x_0,\sigma).
\]
Then $A_{\Omega,W}$ is a $K$-intrinsically differentiable class.
\end{lemma}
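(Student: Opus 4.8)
The plan is to convert the single-step contraction estimate $R^u(x_0,\theta\sigma)\leq\frac{\theta}{2}R^u(x_0,\sigma)$ into a polynomial decay rate $R^u(x_0,\sigma)\leq c\sigma^{1+\beta}R^u(x_0,\sigma_0)$ by iterating the hypothesis along the geometric sequence of radii $\theta^j\tau_0$, and then interpolating to cover all intermediate radii. First I would fix $x_0\in K$ and $\sigma_0=\tau_0$. Applying the hypothesis $j$ times starting from $\sigma=\tau_0$ gives
\[
R^u(x_0,\theta^j\tau_0)\leq\left(\frac{\theta}{2}\right)^j R^u(x_0,\tau_0),
\]
which is the decay estimate restricted to the discrete radii $\theta^j\tau_0$. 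I would then choose $\beta\in(0,1]$ so that $\theta^{1+\beta}=\frac{\theta}{2}$, i.e. $\beta=\frac{\log 2}{\log(1/\theta)}$ (note $\beta\in(0,1]$ precisely because $\theta\in(0,1/2]$); then $\left(\frac{\theta}{2}\right)^j=(\theta^j)^{1+\beta}$, so the discrete estimate reads $R^u(x_0,\theta^j\tau_0)\leq (\theta^j\tau_0)^{1+\beta}\tau_0^{-(1+\beta)}R^u(x_0,\tau_0)$.

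Next I would handle a general radius $\sigma\in(0,\tau_0]$ by locating the unique $j\geq 0$ with $\theta^{j+1}\tau_0<\sigma\leq\theta^j\tau_0$. The key monotonicity fact I would use is that $\sigma\mapsto R^u(x_0,\sigma)$ is non-decreasing (the infimum over homogeneous degree 1 maps of a sup over a growing ball is non-decreasing; this is immediate from the definition), so
\[
R^u(x_0,\sigma)\leq R^u(x_0,\theta^j\tau_0)\leq\left(\frac{\theta}{2}\right)^j R^u(x_0,\tau_0).
\]
Since $\sigma>\theta^{j+1}\tau_0$, we have $\theta^j>\sigma/(\theta\tau_0)$, hence $(\theta^j)^{1+\beta}>(\theta\tau_0)^{-(1+\beta)}\sigma^{1+\beta}$, and combining with $\left(\frac\theta2\right)^j=(\theta^j)^{1+\beta}$ yields
\[
R^u(x_0,\sigma)\leq\theta^{-(1+\beta)}\tau_0^{-(1+\beta)}\sigma^{1+\beta}\,R^u(x_0,\tau_0).
\]
Setting $c:=\theta^{-(1+\beta)}\tau_0^{-(1+\beta)}$ and $\sigma_0:=\tau_0$, this is exactly the bound required in Definition~\ref{def:essreg} (the constants $\sigma_0,c,\beta$ depend only on $\tau_0,\theta$, hence only on $K,\Omega,W$ as those determine $\tau_0,\theta$). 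One should also check that the hypothesis is only assumed for $B_\sigma(x_0)\subset\Omega$, which is compatible with the statement of $K$-intrinsic differentiability since there the same restriction is imposed.

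I do not expect any serious obstacle here: the argument is the standard "iterate a one-step decay to get Hölder decay" lemma. The only mild subtlety is bookkeeping the exponent $\beta$ and verifying $\beta\in(0,1]$ from the constraint $\theta\leq 1/2$ (if $\theta$ were larger than $1/2$ one could still get decay but with $\beta>1$, which is why the hypothesis pins $\theta\in(0,1/2]$), together with confirming the monotonicity of $R^u(x_0,\cdot)$, which follows directly from its definition as an infimum of sup-norms over nested balls. No properties specific to buildings or to harmonicity are needed for this lemma beyond what is packaged into the definitions of $R^u$ and $A_{\Omega,W}$.
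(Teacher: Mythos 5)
Your proposal is correct and takes essentially the same approach as the paper: iterate the one-step contraction along the dyadic radii $\theta^j\tau_0$, choose $\beta$ with $\theta^\beta=\tfrac12$ so that $(\theta/2)^j=(\theta^j)^{1+\beta}$, and use monotonicity of $\sigma\mapsto R^u(x_0,\sigma)$ to interpolate to intermediate radii. One small typo: from $\sigma>\theta^{j+1}\tau_0$ you should conclude $\theta^j<\sigma/(\theta\tau_0)$ and hence $(\theta^j)^{1+\beta}<(\theta\tau_0)^{-(1+\beta)}\sigma^{1+\beta}$ (both inequalities reversed from what you wrote); this is the direction actually needed for the upper bound, and it is the direction your final displayed inequality in fact uses.
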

\begin{proof}
We will use $\tau_0$ to be the $\sigma_0$ in the definition of intrinsic differentiability.  We observe that by a quick induction argument (and the monotonicity of $R^u(x,\sigma)$ in its second argument), if $\sigma\in(\theta^n\tau_0,\theta^{n-1}\tau_0]$, then
\[
R^u(x_0,\sigma)\leq\frac{\theta^{n-1}}{2^{n-1}}R^u(x_0,\tau_0).
\]
Choose $\beta\in (0,1]$ such that $\frac12=\theta^\beta$ and set $c=\frac{2}{\theta\tau_0^{1+\beta}}$. If $\sigma\in(\theta^n\tau_0,\theta^{n-1}\tau_0]$, then 
\[
R^u(x_0,\sigma)\leq c\tau_0^{1+\beta}\frac{\theta^n}{2^n}R^u(x_0,\tau_0)=c\tau_0^{1+\beta}(\theta^n)^{1+\beta}R^u(x_0,\tau_0).
\]
Then, because $\theta^n\tau_0<\sigma$,
\[
R^u(x_0,\sigma)\leq c(\tau_0\theta^n)^{1+\beta}R^u(x_0,\tau_0)\leq c\sigma^{1+\beta}R^u(x_0,\tau_0).
\]
\end{proof}

The next lemma generalizes the  classical Reverse Poincar\'e inequality. 

\begin{lemma} \label{lem:ReversePoincare}
There exists a constant $C>0$ such that, for a harmonic map $u:(B_1(0),g) \rightarrow X$ and $Q \in X$,
\[
\int_{B_{1/2}(0)} |\nabla u|^2 \, d\mu_g\leq C \int_{B_1(0)} d^2(u,Q)\, d\mu_g.
\]
\end{lemma}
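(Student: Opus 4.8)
The plan is to establish this Caccioppoli-type estimate by testing the energy-minimality of $u$ against a comparison map that geodesically contracts $u$ toward the constant $Q$ in a collar of $\partial B_1(0)$. First I would fix a cutoff $\eta\in C_c^\infty(B_1(0))$ with $0\le\eta\le 1$, $\eta\equiv 1$ on $B_{1/2}(0)$, and $|\nabla\eta|_g\le C_0$ for a constant $C_0$ depending only on $n$ and the $C^0$-norms of $g,g^{-1}$. Define $w:B_1(0)\to X$ by $w(x)=(1-\eta^2(x))u(x)+\eta^2(x)Q$, i.e.\ the point on the geodesic $\overline{u(x)Q}$ at parameter $\eta^2(x)$. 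Since $\eta\equiv 0$ near $\partial B_1(0)$, $w$ agrees with $u$ there; combined with $d^2(u,Q)\in L^1(B_1(0))$ (a consequence of $u\in W^{1,2}$, since then $d(u,Q)\in W^{1,2}(B_1(0))$) and the pointwise bound below, this shows $w\in W^{1,2}_u(B_1(0),X)$, so $w$ is an admissible comparison map. Because $u$ is harmonic and $X$ is CAT(0), $u$ is in fact the energy minimizer on $B_1(0)$ among maps with its own trace (local minimality plus convexity of the energy on $W^{1,2}_u(B_1(0),X)$; cf.\ the arguments of \cite[Section 2]{gromov-schoen}), hence $\int_{B_1(0)}|\nabla u|^2\,d\mu_g\le\int_{B_1(0)}|\nabla w|^2\,d\mu_g$.

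The key analytic ingredient is the pointwise directional-energy bound for a geodesic interpolation in a CAT(0) space: for a.e.\ $x$ and any direction $Z$, the quadrilateral comparison inequality yields $|w_*(Z)|(x)\le(1-\eta^2(x))|u_*(Z)|(x)+|Z(\eta^2)(x)|\,d(u(x),Q)$. Summing over an orthonormal frame and applying Cauchy-Schwarz, with $|\nabla(\eta^2)|=2\eta|\nabla\eta|$, gives
\[
|\nabla w|^2\le(1-\eta^2)^2|\nabla u|^2+4\eta(1-\eta^2)|\nabla u|\,|\nabla\eta|\,d(u,Q)+4\eta^2|\nabla\eta|^2 d^2(u,Q)\qquad\text{a.e.}
\]
Substituting into the minimality inequality, the $|\nabla u|^2$ terms combine on the left into $\int\eta^2(2-\eta^2)|\nabla u|^2$, while Young's inequality absorbs the cross term, $4\eta(1-\eta^2)|\nabla u|\,|\nabla\eta|\,d(u,Q)\le\tfrac12\eta^2|\nabla u|^2+8|\nabla\eta|^2 d^2(u,Q)$. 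Since $\eta^2(2-\eta^2)-\tfrac12\eta^2\ge\tfrac12\eta^2$ and $\eta\equiv1$ on $B_{1/2}(0)$, rearranging produces
\[
\int_{B_{1/2}(0)}|\nabla u|^2\,d\mu_g\le 24\int_{B_1(0)}|\nabla\eta|^2\,d^2(u,Q)\,d\mu_g\le 24C_0^2\int_{B_1(0)}d^2(u,Q)\,d\mu_g,
\]
which is the claim with $C=24C_0^2$.

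The main obstacle is not the algebra but the Korevaar-Schoen Sobolev-space bookkeeping: one must verify that $w\in W^{1,2}(B_1(0),X)$ and that its directional energy densities genuinely satisfy the pointwise quadrilateral bound a.e.\ (via the approximate-differentiability / metric-differential description of $|v_*(Z)|^2$ in \cite{korevaar-schoen1}), and that harmonicity in the sense of Definition~\ref{mindef} upgrades to global energy minimality on $B_1(0)$. Both points are standard but require care; I would dispatch them by invoking the relevant results of \cite{korevaar-schoen1} and the estimates of \cite[Section 2]{gromov-schoen}.
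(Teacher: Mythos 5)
Your proof is correct, but it takes a genuinely different route from the paper's. The paper quotes \cite[Proposition 2.2]{gromov-schoen}, which is the weak subharmonicity of $d^2(u,Q)$:
\[
2 \int_{B_1(0)} |\nabla u|^2 \varphi \, d\mu_g \leq - \int_{B_1(0)} \nabla d^2 (u,Q) \cdot \nabla \varphi  \, d\mu_g,\qquad \varphi\in C^\infty_c(B_1(0)),\ \varphi\geq 0,
\]
and then tests with $\varphi=\eta^2$, applies Cauchy–Schwarz together with the chain-rule bound $|\nabla d(u,Q)|\leq |\nabla u|$, and divides to absorb. You instead construct an explicit competitor map $w=(1-\eta^2)u+\eta^2 Q$ contracting $u$ geodesically toward $Q$, control $|\nabla w|^2$ pointwise by the CAT(0) quadrilateral inequality, invoke global energy minimality of $u$ on $B_1(0)$ (which does hold by convexity and uniqueness of minimizers for CAT(0) targets, as you note), and finish by Young's inequality. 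These are two standard ways to derive a Caccioppoli estimate, and they are closely related: the Gromov–Schoen Proposition 2.2 that the paper cites is itself \emph{proved} by essentially the geodesic-interpolation variation you construct. So the paper saves work by invoking a black-box variational inequality that is already available in the literature, while yours is self-contained at the cost of carrying the $W^{1,2}$-admissibility of $w$, the a.e.\ quadrilateral bound on directional energies, and the local-to-global minimality upgrade explicitly. Both give the claim with constants of the same character; your bookkeeping is slightly heavier but touches nothing outside \cite{korevaar-schoen1} and \cite[Section 2]{gromov-schoen}, exactly as you flag.
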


\begin{proof}
By \cite[Proposition 2.2]{gromov-schoen},
\[
2 \int_{B_1(0)} |\nabla u|^2 \varphi \, d\mu_g \leq - \int_{B_1(0)} \nabla d^2 (u,Q) \cdot \nabla \varphi  \, d\mu_g,\ \ \forall \varphi\in C^\infty_c(B_1(0)), \varphi \geq 0.
\]
Let  $\varphi=\eta^2$ be a radial test function with $\eta\equiv 1$ in $B_{1/2}(0)$, $\eta \equiv 0$ in $B_1(0) \backslash B_{3/4}(0)$, and $|\nabla \eta| \leq 8$.
Then
\begin{eqnarray*}
 \int_{B_1(0)} |\nabla u|^2 \eta^2  \, d\mu_g
& \leq &
2 \int_{B_1(0)} | d(u,Q) \nabla \eta |\,| \eta \nabla d(u,Q) |\, d\mu_g
\\
& \leq & 2\left( \int_{B_1(0)} d^2(u,Q) |\nabla \eta|^2  \, d\mu_g\right)^{1/2} \left( \int_{B_{1}(0)} |\nabla d(u,Q) |^2 \eta^2 \, d\mu_g\right)^{1/2}
\\
& \leq & 16 \left( \int_{B_1(0)} d^2(u,Q)\, d\mu_g\right)^{1/2} \left( \int_{B_{1}(0)} |\nabla u |^2\, \eta^2 d\mu_g\right)^{1/2}.
\end{eqnarray*}
\end{proof}

\subsection{Trees are essentially regular}

While the work of \cite{sun} and \cite{gromov-schoen} together imply essential regularity for trees, it is important to our induction argument that the constants $c, \beta$ that appear in verifying essential regularity are independent of the target tree. For this reason, we provide below more details than can found in \cite[proof of Theorem 5.5]{gromov-schoen}. Given a Lipschitz Riemannian domain $\Omega$, let $$A_{\Omega}:=\{u:\Omega\to T\,|\, u \text{ is a harmonic map to a tree }T\}.$$
Notice in this definition (cf Definition \ref{def:aomegaw}) we suppress the $W$ from the subscript as every tree has the same finite reflection group.
\begin{proposition}\label{prop:tessreg}
Trees are essentially regular (cf~Definition~\ref{def:essreg}). 
\end{proposition}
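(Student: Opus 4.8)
The goal is to verify the hypothesis of Lemma~\ref{lem:intdifftheta} uniformly over the class $A_\Omega$: one must produce $\tau_0 > 0$ and $\theta \in (0, 1/2]$, depending only on the domain data and the compact set $K$, such that for every harmonic map $u$ into any tree $T$, every $x_0 \in K$, and every $\sigma \in (0,\tau_0]$, one has $R^u(x_0, \theta\sigma) \le \frac{\theta}{2} R^u(x_0, \sigma)$. The plan is to run a blow-up/compactness argument exactly in the spirit of \cite[Theorem~5.5]{gromov-schoen} and \cite{sun}, but tracking carefully that the target tree is never fixed, so the compactness must be performed using ultralimits (or convergence in the pullback sense, Section~\ref{sec:limitspace}) rather than Arzel\`a--Ascoli.

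First I would set up the contradiction: suppose no such $\theta$ works, i.e. for each $j$ there is a harmonic map $u_j : (B_1(0), g_j) \to T_j$ into a tree $T_j$, a point $x_j \in K$, and a scale $\sigma_j \in (0, 1/j]$ with $R^{u_j}(x_j, \theta_j \sigma_j) > \frac{\theta_j}{2} R^{u_j}(x_j, \sigma_j)$ for a sequence $\theta_j \to 0$ (one quantifies this so that failure for all $\theta$ forces such a sequence). Rescale: center at $x_j$, blow up at scale $\sigma_j$, and normalize so that $R^{u_j}(x_j, 1) = 1$ after rescaling the target metric; the uniform energy/Lipschitz bounds come from the order monotonicity and \cite[Theorem~2.4.6]{korevaar-schoen1} since the order is bounded (by the gap, but in fact here one only needs a crude bound at a fixed scale). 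Pass to an ultralimit: the rescaled domain metrics converge to the Euclidean metric, and the rescaled maps converge in the pullback sense to a harmonic map $u_\infty : B_1(0) \to T_\infty$, where $T_\infty = \wlim T_j$ is again a tree (a Euclidean building of dimension $1$, by \cite[Theorem~5.1.1]{kleiner-leeb}). The remainder $R^{u_j}(x_j, \cdot)$ passes to the limit because the homogeneous degree $1$ competitors also pass to the limit (this is where the $(X,A,L)$-triple machinery and the lower semicontinuity of the infimum in the definition of $R$ are used); one gets $R^{u_\infty}(0,1) = 1$, while the contradiction hypothesis forces $R^{u_\infty}(0, s) = 0$ for every $s$ in a suitable shrinking range, hence (using homogeneity of the limit blow-up and the monotonicity of $R$) that $u_\infty$ itself is homogeneous of degree $1$ on a small ball, i.e. $R^{u_\infty}(0, s) = 0$ for small $s$.

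Now the key structural input for \emph{trees}: a harmonic map into a tree which is homogeneous of degree $1$ at the origin has order $1$, and by \cite{sun} (or by a direct argument using that the space of directions of a tree is a discrete set of points, so $\Sigma_p T$ consists of finitely many unit vectors and a degree-$1$ homogeneous harmonic map must land in a single geodesic line) such a map is in fact linear, forcing $R^{u_\infty}(0, s) \equiv 0$ for all $s < 1$, contradicting $R^{u_\infty}(0,1) = 1$ unless the remainder function is genuinely non-monotone-improving. More precisely, the right way to close the loop is: one shows the limit is linear on $B_{1/2}(0)$, hence $R^{u_\infty}(0, 1/2) = 0$; but the contradiction hypothesis passed to the limit gives $R^{u_\infty}(0, \theta) \ge \frac{\theta}{2} R^{u_\infty}(0, 1)$ only in the limit $\theta \to 0$, so one must instead argue that for the \emph{pre-limit} maps, on a fixed small ball $B_{r}(0)$, the remainder decays — and this is obtained by the standard iteration: $R^{u_j}(x_j, r) \le \frac{r}{4} R^{u_j}(x_j, 1)$ for all large $j$ and a fixed small $r$, because otherwise the limit map would fail to be linear on $B_r(0)$. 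Then Lemma~\ref{lem:intdifftheta} with $\theta = r$ (after checking $r \le 1/2$, shrinking if needed) and $\tau_0$ chosen from the compactness finishes the proof.

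\textbf{Main obstacle.} The delicate point is the passage of the remainder $R^{u_j}(x_j, \sigma)$ to the ultralimit: the infimum in the definition of $R$ ranges over \emph{all} homogeneous degree $1$ maps about $x_j$ into the (varying) target $T_j$, and one must show (a) that a near-optimal competitor for $u_j$ survives in the ultralimit to give an upper bound on $R^{u_\infty}$, and (b) that a competitor for $u_\infty$ can be ``pulled back" to the $T_j$ (via the chart/isometry data, as in the proof of Proposition~\ref{LinearApprox} and Lemma~\ref{lem:ukLk}) to give a lower bound — i.e., that $R$ is both upper and lower semicontinuous along the ultralimit. The lower-semicontinuity half is routine, but the pull-back half requires the $(X,A,L)$-triple formalism and the fact that charts of $T_\infty$ are ultralimits of charts of $T_j$ (\cite[Proof of Theorem 5.1.1]{kleiner-leeb}). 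Once this continuity of $R$ is in hand, the rest is the Gromov--Schoen iteration scheme applied verbatim, and the tree-specific simplification (one-dimensional space of directions $\Rightarrow$ degree-$1$ homogeneous $\Rightarrow$ linear) is what makes the base case clean; the whole point of redoing it here rather than citing \cite{gromov-schoen,sun} is that $\theta, \tau_0, c, \beta$ emerge depending only on $n$, the domain, and $K$, never on $T_j$.
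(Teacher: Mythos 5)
Your outline is in the same spirit as the paper (blow up by contradiction, pass to an ultralimit into a tree via \cite[Theorem~5.1.1]{kleiner-leeb}, then exploit the one-dimensionality of trees), but the normalization you propose is a genuine gap. You rescale the target so that $R^{v_j}(0,1)=1$. This quantity is an \emph{infimum} over all homogeneous degree~$1$ competitors, and a near-optimal competitor $L_j$ for $v_j$ may have arbitrarily large Lipschitz constant and a value $L_j(0)$ far from $v_j(0)$; consequently $R^{v_j}(0,1)=1$ does not control $\sup_{B_1(0)}d'(v_j,v_j(0))$, hence not $I^{v_j}(1)$, hence (even with order monotonicity and \cite[Theorem~2.4.6]{korevaar-schoen1}) not the energy or the interior Lipschitz constant of $v_j$. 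Without those bounds you cannot form the ultralimit $v_\omega$, and the compactness step collapses. The paper instead normalizes $\sup_{B_1(0)}d_k'(v_k,v_k(0))=1$, which via Lemma~\ref{lem:ReversePoincare} (the reverse Poincar\'e inequality, proved separately) immediately gives the uniform energy bound and thence the Lipschitz bound; this choice is load-bearing, not cosmetic.

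The other structural difference is how the contradiction closes. You try to port an estimate on $R^{v_\omega}$ back to $R^{v_j}$, which requires precisely the two-sided semicontinuity of $R$ under ultralimits that you flag as ``the delicate point'' but do not resolve. The paper sidesteps this by never normalizing $R$: from $\sup_{B_1(0)}d_k'(v_k,v_k(0))=1$ it derives (via a monotonicity argument with $e^{cr}r^{-(n+1)}I^{v_k}(r)$) that $\liminf_k I^{v_k}(1/2)>0$, whence $I^{v_\omega}(1/2)\ge\epsilon$ and $v_\omega$ is nonconstant; the final step then follows \cite[Theorem~5.5]{gromov-schoen} and \cite[Theorems~3.8, 3.9]{sun}, using Sun's order gap for trees and the fact that an order-$1$ point of a harmonic map to a tree has a neighborhood mapping into a single geodesic, rather than your sketch via discreteness of $\Sigma_pT$. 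In summary: right skeleton, but the proposal's normalization would not yield compactness and its passage of $R$ through the ultralimit is exactly what the paper is careful to avoid needing.
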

\begin{proof}
Let $\Omega$ be a Riemannian domain and $K\subset\subset\Omega$ a compact subset.  Let $\tau_0=\min \{\frac 12, d(\partial\Omega,K)\}$.  We will prove that there exists $\theta \in (0, 1/2]$ such that for any harmonic map to a tree $u:\Omega\to T$, and any ball $B_\sigma(x_0)$ for $0<\sigma\leq\tau_0$ with $x_0\in K$, we have
\[
R^{u}(x_0,\theta\sigma)\leq\frac\theta2R^{u}(x_0,\sigma).
\]
An application of Lemma \ref{lem:intdifftheta} then implies that all trees are essentially regular, with the constants $c, \beta$ depending on $\Omega, K$ but not on the tree.

We proceed by contradiction.  Assume that there is no such $\theta$.  Thus, there exists  a sequence of harmonic maps $u_k:(\Omega,g)\to (T_k,d_k)$ to trees,  $x_k\in K$, $\sigma_k\in(0,\tau_0]$, and $\theta_k \to 0$ such that
\[
\frac{\theta_k}2R^{u_k}(x_k,\sigma_k) <R^{u_k}(x_k,\theta_k\sigma_k).
\]

Choose normal coordinates about each $x_k$ and then rescale the maps to obtain a new sequence $v_k:(B_2(0),g_k)\to (T_k,d_k')$ by taking $v_k(x):= u_k(\sigma_k x)$ and setting $d_k' := \mu_k d_k$ where we choose $\mu_k$ so that 
\begin{equation} \label{eq:sup=1}
\sup_{x \in B_1(0)}d_k'(v_k(x),v_k(0))=1.
\end{equation}
  For the rescaled sequence, we have the inequality
\begin{equation}\label{eq:badineqessreg}
\frac{\theta_k}2R^{v_k}(0,1) <R^{v_k}(0,\theta_k).
\end{equation}
Assume $\liminf_{k \rightarrow \infty} I^{v_k}(1/2) =0$, and choose a subsequence $v_{k_j}$ with $k_j\rightarrow \infty$  such that $I^{v_{k_j}}(1/2) \rightarrow 0$.  By applying the  monotonicity of $r \mapsto e^{cr} r^{-(n+1)}I^{v_{k_j}}(r)$ multiple times,  we have
\begin{eqnarray*}
\left(R^{v_{k_j}}(0,1) \right)^2 & \leq & \left(\frac{2}{\theta_{k_j}}R^{v_{k_j}}(0,\theta_{k_j}) \right)^2 \ \leq \ \frac{4}{\theta_{k_j}^2}\sup_{x \in B_{\theta_{k_j}}(0) }d_{k_j}'^2(v_{k_j}(x), v_{k_j}(0)) 
\\
& \leq & \frac{4e^{c\theta_{k_j}}}{\theta_{k_j}^{n+2}} \sup_{x \in B_{\theta_{k_j}}(0) } \int_{B_{\theta_{k_j}}(x)} d_{k_j}'^2(v_{k_j}, v_{k_j}(0)) \, d\mu_{g_{k_j}}
\\ 
& \leq & 
\frac{e^{c \theta_{k_j}}  2^{n+4}}{(2\theta_{k_j})^{n+2}} \int_{B_{2\theta_{k_j}}(0)} d_{k_j}'^2(v_{k_j}, v_{k_j}(0)) \, d\mu_{g_{k_j}}
\\
&  \leq &
\frac{e^{2c \theta_{k_j}} 2^{n+4}I^{v_{k_j}}(2\theta_{k_j})}{(2\theta_{k_j})^{n+1}} 
\ \leq \  e^{c /2} 2^{2n+5}I^{v_{k_j}}(1/2) \rightarrow 0.
\end{eqnarray*}
Thus, $R^{v_{k_j}}(0,1)\rightarrow 0$.  Combined with $I^{v_{k_j}}(1/2) \rightarrow 0$, we have $d'_{k_j}(v_{k_j},v_{k_j}(0)) \rightarrow 0$  uniformly in $B_1(0)$, contradicting (\ref{eq:sup=1}).  
Therefore, $I^{v_k}(1/2)\geq \epsilon>0$ for all $k$ sufficiently large.  Additionally, by Lemma~\ref{lem:ReversePoincare} and (\ref{eq:sup=1}), we also have a uniform energy bound, $E^{v_k}[B_1(0)] \leq E_0$.    By 
the convergence results of \cite[Section 3]{korevaar-schoen2} and applying Remark \ref{KLTheorem5.1}, the sequence $v_k|B_{1}(0)$ converges locally uniformly in the pullback sense to a harmonic map $v_\omega:B_{1}(0) \rightarrow T$ where $T$ is a tree.  The uniform boundedness  (\ref{eq:sup=1}) and the local uniform convergence  $d(v_k, v_k(0)) \rightarrow d_\omega(v_\omega, v_\omega(0))$ implies $d(v_k, v_k(0))\rightarrow d_\omega(v_\omega, v_\omega(0))$ in $L^2(B_{1}(0))$. 
Thus, the inequality $I^{v_k}(1/2) \geq \epsilon$ 
implies that $I^{v_\omega}(1/2)\geq \epsilon$; in particular $v_\omega$ is a non-constant  map.  We can now follow the last paragraph of the proof of \cite[Theorem 5.5]{gromov-schoen}  with help from \cite[Theorem 3.8 and Theorem 3.9]{sun} to account for the fact that the trees considered here are not necessarily locally finite. 
\end{proof}
\subsection{Extending to higher dimension}
\label{sec:essreg}

The following lemma is useful for finding the order gap, because it provides information about the order of product maps.
\begin{lemma}\label{lem:homord}
Let $X,Y$ be complete CAT(0) spaces, and let $u=(u_1,u_2)\colon B_r(0)\subset\R^n\to X\times Y$ be a homogeneous harmonic map of order $\alpha$ in the sense that $\text{Ord}^u(0,s)=\alpha$ for all $s\in(0,r)$.  Then if $u_1$ (resp. $u_2$) is nonconstant, it also satisfies $\text{Ord}^{u_1}(0,s)=\alpha$ for all $s\in(0,r)$.
\end{lemma}

In the proof, we will need following characterization of homogeneous harmonic maps.

\begin{lemma}\label{lem:homchar}
Let $X$ be a complete CAT(0) space.  A nonconstant harmonic map $v:{B_r(0)} \subset \R^n \to X$ has $\text{Ord}^v(x_0,\sigma)= \alpha$ for all $\sigma \in (0,r)$ if and only if it satisfies:
 \begin{enumerate}
 \item for all $\lambda<1$ and all $x\in B_r(0)$,
 \[
 d(v(\lambda x),v(0))=\lambda^\alpha d(v(x),v(0)),
 \]
 \item for each $x\in B_r(0)$, the map $\lambda\mapsto u(\lambda x)$ is a geodesic in $X$, and
 \item for all $\lambda<1$ and $\sigma\in(0,r)$, $E^v(\lambda \sigma)=\lambda^{n+2\alpha-2}E^v(\sigma)$.
 \end{enumerate}
\end{lemma}

Lemma \ref{lem:homchar} follows by work similar to that in \cite[Section 2]{gromov-schoen} for locally finite complexes, but can be extended to general complete CAT(0) spaces by the same adaptations which allowed us to define the order function.

\begin{proof}[Proof of Lemma \ref{lem:homord}]
We use the characterization of Lemma \ref{lem:homchar}, so we shall verify that (1), (2), and (3) of that Lemma hold for the maps $u_1,u_2$.  We write $d$ for the metric on $X\times Y$, and $d_1,d_2$ for the metrics on $X,Y$, respectively. Moreover, throughout we presume that both $u_1, u_2$ are nonconstant since, for example, if $u_1$ is constant then $u_2$ must be homogeneous of order $\alpha$.

We first verify (2). 
 To see this, note that the map $\lambda\mapsto u(\lambda x)$ is a geodesic in $X \times Y$ and, for $\gamma:=(\gamma_1,\gamma_2):[0,1]\to X\times Y$, $\gamma$ is a geodesic if and only if the two factors $\gamma_1,\gamma_2$ are.

Since it will be useful to us in the next point, we observe at this stage that since $\gamma_i(\lambda):= u_i(\lambda x)$ is a constant-speed geodesic, if we write $\delta_i(p)=d_i(p,u_i(0))$, then 
\[
\delta_1(\gamma_1(\lambda))\delta_2(\gamma_2(1))=\delta_1(\gamma_1(1))\delta_2(\gamma_2(\lambda)).
\]
In particular, the ratio between $\delta_1(u_1(\lambda x))$ and $\delta_2(u_2(\lambda x))$ does not depend on $\lambda$, but only on $x$.

Now, we verify (1).  Let $x\in B_r(0)$ and $\lambda<1$.  Since $u$ is homogeneous harmonic, we have that
\begin{equation}\label{eq:hom-ident-prod}
d(u(\lambda x),u(0))=\lambda^\alpha d(u(x),u(0)).
\end{equation}
Now, for $i,j \in \{1,2\}$, $i \neq j$, if  $u_i(x)=u_i(0)=0$, by the previous work we see that $u_i(\lambda x)=u_i(0)$ as well.  Therefore, because $u$ maps into a product, we have that
\[
d_j(u_j(\lambda x),u_{j}(0))=d(u(\lambda x),u(0))=\lambda^\alpha d(u(x),u(0))=\lambda^\alpha d_{j}(u_{j}( x),u_{j}(0))
\]
and the corresponding identity clearly holds for $u_i$. We now assume that $u_i(x)\neq u_i(0)$ for $i \in \{1,2\}$.  

Recall that by the definition of the product metric, $d^2((p_1,p_2),u(0))=d^2_1(p_1,u_1(0))+d_2^2(p_2,u_2(0))$.  Squaring \eqref{eq:hom-ident-prod} and expanding the result, we see that
\[
d^2_1(u_1(\lambda x),u_1(0))+d_2^2(u_2(\lambda x),u_2(0))=\lambda^{2\alpha}d^2_1(u_1(x),u_1(0))+\lambda^{2\alpha}d^2_2(u_2(x),u_2(0)).
\]
And since $\frac{d_1(u_1(\lambda x),u_1(0))}{d_2(u_2(\lambda x),u_2(0))}=c$ for $c$ independent of $\lambda$, 
\[
(1+c^2)d_1^2(u_1(\lambda x),u_1(0))=(1+c^2)\lambda^{2\alpha}d_1^2(u_1(x),u_1(0)).
\]
It follows that for $i=1,2$,
\begin{equation}\label{eq:hom-ident-con}
d_i(u_i(\lambda x),u_i(0))=\lambda^\alpha d_i(u_i(x),u_i(0)).
\end{equation}

Finally, we verify (3).  First, we note that because of \eqref{eq:hom-ident-con}, we can use triangle comparison to conclude that for $i=1,2$ and any $x\in B_r(0)$,
\[
|\nabla u_i|^2(\lambda x)\leq \lambda^{2\alpha-2}|\nabla u_i|^2(x).
\]
Integrating, we conclude that for $i=1,2$, and for any $\lambda<1$ and $\sigma\in(0,r)$
\begin{equation}\label{eq:hom-en-ineq}
E^{u_i}(\lambda\sigma)\leq\lambda^{n+2\alpha-2}E^{u_i}(\sigma).
\end{equation}

However, we also observe that for any $s$,
\[
E^u(s)=E^{u_1}(s)+E^{u_2}(s),
\]
so we expand
\[
E^u(\lambda\sigma)=E^{u_1}(\lambda\sigma)+E^{u_2}(\lambda\sigma)\leq\lambda^{n+2\alpha-2}(E^{u_1}(\sigma)+E^{u_2}(\sigma))=\lambda^{n+2\alpha-2}E^u(\sigma).
\]
Since the outer two quantities are equal, the inner inequality cannot be strict.  Therefore, for each $i=1,2$ we have
\[
E^{u_i}(\lambda\sigma)=\lambda^{n+2\alpha-2}E^{u_i}(\sigma).
\]

Therefore both $u_1,u_2$ are in fact homogeneous harmonic maps of order $\alpha$ in the sense that $\text{Ord}_{u_i}(0,s)=\alpha$ for all $s\in(0,r)$.
\end{proof}

\begin{proof}[Proof of Theorem \ref{thm:ordgap}]
Throughout, we proceed by induction on $N$, the dimension of the building; we suppose at all stages that we have proved all of these results for all buildings of dimension less than $N$.

First, note that for trees, (1) is proven in \cite[Theorem 3.8]{sun} and the constant $\epsilon$ is independent of the tree. Now suppose $N \geq 2$. To show (1), suppose we have a sequence of harmonic maps $u_k:(\Omega,g)\to X_k$ into Euclidean buildings of type $W$, and $\text{Ord}^{u_k}(p)=\alpha_k\to1$ as $k\to\infty$. We replace each $u_k$ by its homogeneous harmonic ultralimit $(u_k)_\omega$ using the (Gromov-Schoen) blow up maps and Remark \ref{KLTheorem5.1}. 
So now we have a sequence of homogeneous harmonic maps $(u_k)_\omega:\R^n \to (X_k)_\omega$, where $(X_k)_\omega$ are Euclidean buildings of type $W$, and $\text{Ord}^{(u_k)_\omega}(0,1)=\alpha_k$ by homogeneity. Now for each $k$, rescale the metric $d_k':= \mu_k (d_k)_\omega$ so that $\sup_{B_1(0)} d_k'((u_k)_\omega,(u_k)_\omega(0))=1$. Then by Lemma \ref{lem:ReversePoincare}, $E^{(u_k)_\omega}[B_{\frac 12}(0)]\leq C$.

For ease of notation, relabel the ultralimit maps as $u_k$ and the target buildings as $(X_k,d_k')$. Now apply the convergence results of \cite[Section 3]{korevaar-schoen2} and Remark \ref{KLTheorem5.1} to this new sequence of maps and let $ u_\omega:= \wlim u_k$ and $X_\omega:= \wlim X_k$. Since $E^{u_k}[B_{\frac 14}(0)]\to E^{ u_\omega}[B_{\frac 14}(0)]$ and $\lim_{k \to \infty}I^{u_k}(1/4)=I^{ u_\omega}(1/4)$, we have $\text{Ord}^{u_k}(0,1/4)\to\text{Ord}^{ u_\omega}(0,1/4)$. And thus, $\text{Ord}^{ u_\omega}(0,1/4)=1$.  By the monotonicity of order, for all $0<r<1$, $\text{Ord}^{ u_\omega}(0,r)=1$ and thus $ u_\omega$ is a homogeneous harmonic map of degree $1$. 

In particular, by Proposition \ref{prop:flats}, there exists $r_0>0$ such that $u_\omega|B_{r_0}$ can be extended to a homogeneous harmonic map $L:\R^n \to X_\omega$ which has image contained in some apartment $A_\omega$ of $ X_\omega$ where $A_\omega=\wlim A_k$ for some sequence of apartments $A_k\subseteq X_k$. Constructing the maps $L_k$ as in \eqref{eq:Lk}, each $(X_k ,A_k,L_k)$ is an $(X_\omega,A_\omega,L)$-triple, and following the proof of Lemma \ref{lem:ukLk} we observe that there exists a subsequence of the $u_k$ such that
\[
\sup_{B_{r_0}(0)}d_k(u_k,L_k)\to0.
\]

\newcommand{\Ord}{\text{Ord}}
For each $k$, consider the subbuilding $P_{F_k}=F_k\times Y_k$ consisting of all flats parallel to $F_k:= L_k(\R^n)$.  We observe that this complex is essentially regular since $F_k \simeq \R^m$ and $\text{dimension}(Y_k)<N$, so Theorem \ref{GS Theorem 5.1} applies.  In particular, for sufficiently large $k$, $u_k(B_{\frac{r_0}4})\subset F_k\times Y_k$. So, write $u_k|B_{\frac{r_0}4}(0)=(u_{k,1},u_{k,2})$ and observe that this is a map into a product of NPC spaces, where $Y_k$ is a building of type $W_k$. Since each possible $W_k$ that can occur is a subgroup of a restriction of the original $W$, there are only finitely many possible $W_k$, all depending on $W$. By Lemma \ref{lem:homord} we note that since $u_k$ is homogeneous of order $\alpha_k$, the maps $u_{k,1}:B_{\frac{r_0}4}\to F_k$ and $u_{k,2}:B_{\frac{r_0}4}\to Y_k$ are either constant, or homogeneous of order $\alpha_k>1$. Because $F_k$ is Euclidean, if $u_{k,1}$ is nonconstant, $\Ord^{u_{k,1}}(0)\geq2$, and by the inductive hypothesis, if $u_{k,2}$ is nonconstant, $\Ord^{u_{k,2}}(0)\geq1+\epsilon_{W_k}$. Since $u_k$ is nonconstant, $u_{k,1}$ and $u_{k,2}$ cannot both be constant, so $\Ord^{u_k}(0)\geq\min\{2,1+\epsilon_{W_k}\}$.  Now, since there are only finitely many possible $W_k$ that can occur, for any large enough $k$, we have $\Ord^{u_k}(0)\geq1+\epsilon_W$, where $\epsilon_W:=\min\{\epsilon_{W_k}\}$.  But this means that the $\alpha_k$ certainly cannot converge to $1$.

For (2), assume that $\text{Ord}^u(p)=1$ and let $u_\sigma$ be the (Gromov-Schoen) blow up maps at $p$  defined by \eqref{eq:usigmamap}.  Then $^{g_\sigma}E^{u_\sigma}(1) \leq 2$ for $\sigma>0$ sufficiently small.  Let $\sigma_k \rightarrow 0$, $r_0 \in (0,1)$, $X_k$, $A_k$, $L_k$, $X_\omega$, $A_\omega$, $L_\omega$ be given by Proposition~\ref{LinearApprox}.    Let $\delta_0=\delta_0(E_0,r_0,X_\omega,A_\omega,L_\omega)>0$ be chosen as in Theorem~\ref{GS Theorem 5.1}.  By Proposition~\ref{LinearApprox}, we can choose $k$ such that  $\sup_{x \in B_{r_0}(0)} d_k(u_k(x),L_k(x))<\delta_0$.  By the inductive hypothesis, $P_{F_k}\simeq F_k \times Y_k$ is essentially regular since $Y_k$ is a Euclidean building of dimension $N\text{-}m$ and $F_k \simeq \R^m$ for some $m\geq 1$. Thus, for this $k$ Theorem \ref{GS Theorem 5.1} implies that $u_k(B_{\frac{r_0}4}(0)) \subset P_{F_k}\simeq \R^m  \times Y_k$.  

Now, let  $\rho:=\sigma_k r_0/4$.   If $m=N$, then $F_k$ is an apartment and we are done since  $u|{B_\rho(p)} \subset F_k$.  Otherwise, $u|{B_\rho(p)}$ decomposes as two harmonic maps $\overline {u_1}:B_\rho(p) \to F_k \simeq \R^{m}$ and $\overline{ u_2}:B_\rho(p) \to Y_k$. Note that $\overline{u_1}$ is full rank by the construction of $F_k$ so if $\text{Ord}^{\overline{u_2}}(p)\geq 1+\epsilon$ or $\overline {u_2}$ is constant, then we are done.  
Alternatively, if $\text{Ord}^{\overline{u_2}}(p)=1$, then we use the inductive hypothesis again to assert that there exists an $r \in (0,\rho)$ and a subbuilding of $Y_k$, isometric to $\R^{j} \times Z$, where $j \in \{1, \dots, \min\{n, N-m\}\}$, $Z$ is a building of dimension $N-m-j$ and $\overline{ u_2}|B_r(p)$  decomposes as $\widehat{ u_1}:B_r(p) \rightarrow \R^{j}$ and ${ u}_2:B_r(p) \rightarrow Z$ where $\widehat {u_1}$ is full rank and $\text{Ord}^{{u}_2}(p)\geq 1+\epsilon$ or $u_2$ is constant. Then $u|B_r(p):=(u_1, u_2):B_r(p) \to \R^{m+j} \times Z$ where $u_1=(\overline{u_1}, \widehat{u_1}):B_r(p) \to \R^{m+j}$ is full rank.

For (3), we initially follow the outline of the proof for trees given in Proposition \ref{prop:tessreg}. By contradiction, we again assume there exists  a sequence of harmonic maps $u_k:(\Omega,g)\to (X_k,d_k)$ where each $X_k$ is a building of dimension $N$ of type $W$,  $x_k\in K$, $\sigma_k\in(0,\tau_0]$, and $\theta_k \to 0$ such that
\[
\frac{\theta_k}2R^{u_k}(x_k,\sigma_k) <R^{u_k}(x_k,\theta_k\sigma_k).
\]
Rescaling as in Proposition \ref{prop:tessreg}, we produce a sequence $v_k:(B_2(0),g_k)\to (X_k,d_k')$ by taking $v_k(x):= u_k(\sigma_k x)$ and setting $d_k' := \mu_k d_k$ where we choose $\mu_k$ so that 
\begin{equation} \label{eq:sup=1}
\sup_{x \in B_1(0)}d_k'(v_k(x),v_k(0))=1.
\end{equation}
  For the rescaled sequence, we have the inequality
\begin{equation}\label{eq:badineqessreg2}
\frac{\theta_k}2R^{v_k}(0,1) <R^{v_k}(0,\theta_k)
\end{equation}and as before, $\wlim v_k=v_\omega:(B_1(0),g) \to (X_\omega, d_\omega)$ is a harmonic map where $X_\omega$ is of type $W$. Now we consider two cases, depending on the order of $v_\omega$ at $0$.

\emph{Case 1:} Presume that $\text{Ord}^{v_\omega}(0)>1$. Then by part (1) of this same lemma, which we already know holds for buildings of dimension $N$, $\text{Ord}^{v_\omega}(0)\geq 1+ \epsilon$, where $\epsilon$ depends on $W$. Thus there exists a constant $c$ such that
\[
R^{v_\omega}(0, \sigma) \leq c \sigma^{1+\epsilon}R^{v_\omega}(0,1)
\] and for small enough $\theta$ depending on $c,\epsilon$, we contradict \eqref{eq:badineqessreg2} for $k$ large enough.

\emph{Case 2:} Presume that $\text{Ord}^{v_\omega}(0)=1$ (and $v_\omega$ is not homogeneous since otherwise the contradiction to \eqref{eq:badineqessreg2} is immediate). Then by part (2) of this same lemma, there exists a subbuilding $\FY$ and a radius $\sigma_0>0$ so that $v_\omega(B_{\sigma_0}(0)) \subset \FY$. Now $F \simeq \R^m$ and $Y$ is a building of dimension less than $N$ so by induction and \cite[Lemma 6.1]{gromov-schoen}, $\FY$ is essentially regular. Thus there exist $c, \beta$ depending on $K, \Omega, W$ such that $R^{v_\omega}(0,\sigma) \leq c \sigma^{1+\beta}R^{v_\omega}(0,\sigma_0)$. By the convergence of $v_k$ to $v_\omega$ and the monotonicity of $R$ in the second argument, for large enough $k$, depending on $c, \beta$, we get a contradiction to \eqref{eq:badineqessreg2}.
\end{proof}


\begin{thebibliography}{ABC}
\label{references}

\bibitem[ABCKT]{ABCKT} J.~Amor\'os, M.~Burger, K.~Corlette, D.~Kotschick, and D.~Toledo. Fundamental groups of compact K\"ahler manifolds. Vol. 44. Mathematical Surveys and Monographs.
American Mathematical Society, Providence, RI, 1996.

\bibitem[AHL]{ahl} R. Appenzeller, A. H\'ebert, and A. Lytchak. {\it Flat subsets of Euclidean buildings.} arXiv:2512.11658v1
\bibitem[BF]{bader-furman} U.~Bader and A. ~Furman. {\it An extension of Margulis’s superrigidity theorem.} Dynamics. Geometry, Number Theory--The Impact of Margulis on Modern Mathematics, pages 47-65. University Chicago Press, Chicago, IL (2022).

\bibitem[BH]{bridson-haefliger} M.~Bridson and A.~Haefliger.  {\it Metric Spaces of Non-Positive Curvature}. Springer-Verlag, Berlin, Heidelberg, New York, 1999.

\bibitem[BT]{bruhat-tits} 
F.~Bruhat, François and J.~Tits, Jacques. {\it Groupes r\'eductifs sur un corps local, I. Donn\'ees radicielles valu\'ees}. Publ.~Math.~IHES 41 (1972) 5-251.

\bibitem[BDDM]{bddm}
D.~Brotbek, G.~Daskalopoulos, Y.~Deng, and C.~Mese.
{\it Pluriharmonic maps into buildings and symmetric differentials} arXiv:2206.11835 

\bibitem[CL]{caprace-lytchak} P-E. Caprace and A. Lytchak. {\it At infinity of finite-dimensional CAT(0) spaces}. Math. Ann. 346 (2010) 1-21.
\bibitem[CM]{monodcarp} P-E. Caprace and N. Monod. {\it Isometry groups of non-positively curved spaces: structure theory.}
Journal of Topology 2 (2009) 661-700.

\bibitem[CT]{carlson-toledo} J.A.~Carlson and D.~Toledo.  {\it Harmonic mappings of Kähler manifolds to locally symmetric spaces.} Publications Math\'{e}matiques de l'IH\'{E}S 69 (1989) 173-201. 
\bibitem[C]{corlette} 
\newblock K. Corlette.  {\em Archimedean superrigidity and hyperbolic geometry.} 
\newblock  Ann. of Math. 135 (1992) 165-182. 

\bibitem[CS]{corlette-simpson} K.~Corlette and C.~Simpson. {\it On the classification of rank-two representations of quasiprojective fundamental groups.} Compositio Mathematica 114 (2008) 1271-1331.


\bibitem[DDW]{ddw} G.~Daskalopoulos, S.~Dostoglou, and R.~Wentworth.  {\it Character varieties and
harmonic maps to
$\R$-trees} Math.~Res.~Lett. 5 (1998) 523-533.



\bibitem[DM1]{daskal-meseMem} G.~Daskalopoulos and C.~Mese. {\it On the Singular Set of Harmonic Maps into DM-Complexes.}  Memoirs of the AMS, Volume 239 Number 1129  (2016).


\bibitem[DM2]{daskal-meseINV}
G. Daskalopoulos, C. Mese.  {\it Rigidity of Teichm\"uller Space.}  Invent.~Math. 224 (2021) 791- 916.


\bibitem[DM3]{daskal-meseMRL}
G. Daskalopoulos, C. Mese. 
{\it Uniqueness of equivariant harmonic maps to symmetric spaces and buildings.} To appear Math.~Res.~Lett.


\bibitem[DMV]{daskal-meseGAFA} G.~Daskalopoulos, C.~Mese and A.~Vdovina. {\it Superrigidity of Hyperbolic Buildings.} GAFA 21 (2011) 1-15.

\bibitem[D]{dees}B. K. Dees. {\it Rectifiability of the singular set of harmonic maps into buildings.} J.~Geom.~Anal. 32 (2022), no.7, Paper No. 205, 57.

\bibitem[F]{freidin}B. Freidin, {\it A Bochner formula for harmonic maps into non-positively curved metric spaces}, Calc. Var. Partial Differential Equations {\bf 58} (2019), no.~4, Paper No. 121, 28 pages

\bibitem[FZ]{freidin-zhang}B. Freidin and Y. Zhang, {\it A Liouville-type theorem and Bochner formula for harmonic maps into metric spaces}, Comm. Anal. Geom. {\bf 28} (2020), no.~8, 1847--1862.

\bibitem[GS]{gromov-schoen} M. Gromov and R. Schoen. {\it Harmonic maps into singular
spaces and $p$-adic superrigidity for lattices in groups of rank
one.} Publ. Math. IHES 76 (1992)  165-246.


\bibitem[J]{jost}J. Jost. {\it Equilibrium maps between metric spaces.} Calc.~Var. 2 (1994) no. 2, 173-204.

\bibitem[KNPS1]{katzarkov-et.al}
L.~Katzarkov, A.~Noll, P.~Pandit, C.~Simpson. {\it Harmonic maps to buildings and singular perturbation theory.} Commun.~Math.~Phys. 336 (2015) 853-903 (2015).

\bibitem[KNPS2]{katzarkov-et.al2} 
L.~Katzarkov, A.~Noll, P.~Pandit, C.~Simpson.
{\it Constructing buildings and harmonic maps}. 
Algebra, Geometry, and Physics in the 21st Century. Progress in Mathematics, vol 324. Birkhäuser, 2017.
D.~Auroux, L.~Katzarkov, T.~Pantev, Y.~Soibelman, Y.~Tschinkel (editors) 

\bibitem[Ki]{kim} S.~Kim.  {\it Harmonic Maps and the Moduli of Higgs Bundles.} Ph.D.~Thesis, Brown University 2017.  
https://repository.library.brown.edu/studio/item/bdr:733382/

\bibitem[KL]{kleiner-leeb}  B.~Kleiner and B.~Leeb.  {\it Rigidity of quasi-isometries for symmetric spaces and Euclidean buildings}.
Publications Math\'matiques de l'IH\'ES 86 (1997) 115-197. 

 \bibitem[KS1]{korevaar-schoen1} N.~Korevaar and R.~Schoen.  {\it Sobolev spaces and harmonic maps for metric space targets}.
Comm.~Anal.~Geom. 1 (1993) 561-659.


\bibitem[KS2]{korevaar-schoen2}  N. Korevaar and R. Schoen.  {\it
Global existence theorem for harmonic maps to non-locally compact spaces.}  Comm.  Anal. Geom. 5 (1997), 333-387.

\bibitem[KS3]{korevaar-schoen3}  N. Korevaar and R. Schoen.  {\it
Global existence theorem for harmonic maps:  finite rank spaces and an approach to rigidity for smooth actions.} Preprint.


\bibitem[L]{lytchak} A.~Lytchak.  {\it Differentiation in metric spaces.} St~ Petersburg~Math.~J. 16 (2005) 1017–104.

\bibitem[LTW]{loftin-tamburelli-wolf} J.~Loftin, A.~Tamburelli, M.~Wolf. {\it Limits of cubic differentials and buildings.} arXiv:2208.07532

\bibitem[Ma]{margulis} G.~Margulis.  {\it Discrete subgroups of semisimple Lie groups.} Ergebnisse der Mathematik
und ihrer Grenzgebiete (3) [Results in Mathematics and Related Areas (3)], vol. 17, Springer-
Verlag, Berlin, 1991.

\bibitem[P1]{parreau} A.~Parreau.  {\it
Immeubles affines: construction par les normes et\'etude des isom\'etries.} Contemporary Math. 262
(2000) 263-302.

\bibitem[P2]{parreau2} A.~Parreau.  {\it Compactification d’espaces de repr\'sentations de groupes de type fini. }Math. Z. 272 (2012) 51-86. 

\bibitem[Si]{siu}  Y.T. Siu.  {\it The complex analyticity of
harmonic maps and the strong rigidity of compact K\"{a}hler
manifolds.}  Ann. of Math. 112 (1980) 73-111.

\bibitem[Su]{sun} X.~Sun.  {\it Regularity of Harmonic Maps to Trees}.
American Journal of Mathematics 125 (2003) 737-771.

\bibitem[T]{tits} J.~Tits.  {\it Immeubles de type affine},  in Buildings and the geometry of diagrams, Como Italy 1984, L.A. Rosati
editor, Springer lecture note in Math. 1181 (1986) 159-190.

\bibitem[W]{wolf} M.~Wolf.  {\it Harmonic maps from surfaces to $\R$-trees.} Math.~Z. 218 (1995) 577-593.

\bibitem[ZZZ]{zhang-zhong-zhu}H.-C. Zhang, X. Zhong and X.-P. Zhu, {\it Quantitative gradient estimates for harmonic maps into singular spaces}, Sci. China Math. {\bf 62} (2019), no.~11, 2371--2400
\end{thebibliography}
\end{document}